\documentclass[12pt,a4paper, final]{amsart}
\usepackage{amssymb}
\usepackage{hyperref}
\usepackage[notcite, notref]{showkeys}
\usepackage[utf8]{inputenc}
\usepackage{stmaryrd}
\usepackage{graphicx}
\usepackage{blkarray} 
\usepackage{tikz}
\usetikzlibrary{shapes.geometric, arrows.meta, positioning}

\usepackage{color}

\theoremstyle{plain}
\newtheorem{theorem}{Theorem}[section]
\newtheorem{proposition}[theorem]{Proposition}
\newtheorem{corollary}[theorem]{Corollary}
\newtheorem{lemma}[theorem]{Lemma}

\theoremstyle{definition}

\newtheorem{remark}[theorem]{Remark}

\newtheorem{conjecture}[theorem]{Conjecture}

\theoremstyle{remark}

\numberwithin{equation}{section}

\newcommand{\R}{\mathbb R}
\newcommand{\C}{\mathbb C}

\newcommand{\fg}{\mathfrak g}
\newcommand{\fh}{\mathfrak h}

\newcommand{\fk}{\mathfrak k}

\newcommand{\fm}{\mathfrak m}

\DeclareMathOperator{\SO}{SO}
\DeclareMathOperator{\SU}{SU}

\newcommand{\sll}{\mathfrak{sl}}

\newcommand{\su}{\mathfrak{su}}

\newcommand{\op}{\operatorname}
\newcommand{\Id}{\textup{Id}}

\newcommand{\mi}{\mathrm{i}}

\DeclareMathOperator{\tr}{Tr}
\DeclareMathOperator{\diag}{diag}
\DeclareMathOperator{\Span}{Span}
\newcommand{\inner}[2]{\langle {#1},{#2}\rangle }
\newcommand{\innerdots}{\langle {\cdot},{\cdot}\rangle }

\DeclareMathOperator{\Hom}{Hom}
\DeclareMathOperator{\End}{End}

\DeclareMathOperator{\Ad}{Ad}
\DeclareMathOperator{\ad}{ad}

\DeclareMathOperator{\Spec}{Spec}

\DeclareMathOperator{\Scal}{Scal}

\DeclareMathOperator{\Ric}{Ric}

\DeclareMathOperator{\vol}{vol}

\newcommand{\kil}{\operatorname{B}}

\newcommand{\abc}{(a,b,c)}

\title[Hodge Laplacian on $1$-forms]{Hodge Laplacian on $1$-forms of homogeneous $3$-spheres}

\author{Jonas~Henkel}
\address{Fachbereich Mathematik und Informatik,
	Philipps-Universit\"at Marburg,
	Campus Lahnberge,
	35032 Marburg, Germany.}
\email{henkelj@mathematik.uni-marburg.de}

\author{Emilio~A.~Lauret}
\address{Instituto de Matemática (INMABB), Departamento de Matemática, Universidad Nacional del Sur (UNS)-CONICET, Bahía Blanca, Argentina.}
\email{emilio.lauret@uns.edu.ar}

\subjclass[2020]{58J40, 58J50, 58J53.}
\keywords{Hodge-Laplacian, homogeneous sphere, spectrum, homogeneous spaces, Berger spheres, first eigenvalue, isospectrality, Riemannian submersion, canonical variation}
\thanks{The second named author was supported by grants from SGCYT--UNS (PGI 24/L126, 24/ZL25) and CONICET (PIP 11220210100343CO)}
\date{\today}

\begin{document}

\begin{abstract}
We study the spectrum of the Hodge-Laplacian on $1$-forms for left-invariant metrics on the Lie group $\operatorname{SU}(2) \cong S^3$ and its quotient $\operatorname{SO}(3)\cong P^3(\mathbb{R})$. 
To the best of our knowledge, we provide the first explicit computation of the full spectrum of the Hodge-Laplacian for a canonical variation by determining the eigenvalues of Berger 3-spheres and analyzing their resulting splitting behavior.
Furthermore, we propose and rigorously prove an explicit formula for the first eigenvalue of general homogeneous metrics on $\operatorname{SU}(2)$ and $\operatorname{SO}(3)$. The formal proof of this result was autonomously discovered by an advanced AI model, providing a notable case study for AI-driven mathematical research.
Finally, leveraging this explicit formula, we apply these spectral results to the inverse problem, showing that the spectrum on $1$-forms determines the metric up to isometry.
The source code for the symbolic computations, visualizations, and a Monte Carlo stress test is provided in the electronic supplementary material \cite{HL2025Code}.
\end{abstract}
\maketitle
	
\tableofcontents
	
\section{Introduction}

The spectrum of the Hodge-Laplacian on a compact Riemannian manifold encodes fundamental geometric and topological information. A central theme in spectral geometry is to understand the extent to which the spectrum determines the geometry of the underlying manifold. While the computation of the spectrum is well understood for Riemannian symmetric spaces, much less is known in the general homogeneous setting, particularly for differential forms of degree $p > 0$.

For compact Riemannian symmetric spaces, the problem is essentially solved using representation theory. Following the foundational work of Matsushima and Murakami \cite{MatsushimaMurakami-HarmonicForms}, who established the identification of the Hodge-Laplacian with the Casimir operator for symmetric spaces of non-compact type, Ikeda and Taniguchi \cite{IkedaTaniguchi-SnPnC} adapted this method to the compact setting. Based on this algebraic description, they performed the explicit computation of the full spectrum and eigenforms for spheres and complex projective spaces. 
Subsequently, the spectra of various other symmetric spaces were studied (see e.g.\ \cite{Tsukamoto81}, \cite{Sthanumoorthy84}, \cite{Mashimo97},  \cite{BenHalima-Grassmannian}, \cite{Chami04}, \cite{Chami12}), as well as some locally symmetric spaces (see e.g.\  \cite{Ikeda88}, \cite{GornetMcGowan06},  \cite{MR-survey}, \cite{LauretMiatelloRossetti-LensSpaces}, \cite{Lauret-p-spectralens}, \cite{GornetMcGowan20}).

In the non-symmetric case, the situation for the Laplace-Beltrami operator ($p=0$) is much better understood. Bérard-Bergery and Bourguignon \cite{BerardBergeryBourguignon-LaplSubmersion} derived general results for Riemannian submersions with totally geodesic fibers. A key feature in this setting is that the Laplacian on the total space commutes with the vertical Laplacian. This allows for a simultaneous diagonalization, where the spectrum of a canonical variation $g_{(t_0, t_1)}$ (obtained by rescaling the metric on the fibers $F_p$ by $t_1$ and on the horizontal distribution by $t_0$) is determined by the eigenvalues of the original metric and the eigenvalues of the fiber:
\[ \Spec(M,g_{(t_{0},t_{1})}) \subset \{t^{-1}_{0}\lambda_{0}+(t^{-1}_{1}-t^{-1}_{0}) \eta_{1} \mid \lambda_{0}\in \Spec(M,g),\; \eta_{1}\in \Spec(F_{p},g|_{F_{p}})\}. \]
Recently, a method using spherical representations was developed in \cite{AgricolaHenkel} to explicitly determine which pairs of eigenvalues occur in the spectrum of a large class of canonical variations. 
For Berger spheres on $S^3$, Tanno~\cite{Tanno79} computed the full spectrum explicitly (see also \cite[Thm.~5.5]{Lotay12}). 
Later, the first eigenvalue for general homogeneous metrics on $S^3$ was determined in \cite{Lauret-SpecSU(2)}. 
For higher dimensional homogeneous spheres see \cite{BettiolPiccione13a}, \cite{BLPhomospheres}, \cite{BLPfullspec}. 

However, for $p$-forms with $p > 0$, this structure breaks down and no analogous formula seems to exist in general, as discussed extensively in \cite{GilkeyLeahyPark-book}. Furthermore, Gilkey and Park \cite{GilkeyPark-RiemSubmersion} proved that eigenforms of the base manifold can only be lifted to eigenforms of the total space if the horizontal distribution is integrable. For the Hopf fibration $S^3 \to S^2$, the horizontal distribution is non-integrable, which further complicates the analysis.
Regarding the asymptotic behavior, Colbois and Courtois \cite[Thm.~1.1]{Colbois} established criteria for the first eigenvalue to vanish on manifolds collapsing along a torus action. Applied to the Hopf fibration on $S^3$, their results imply that the first non-zero eigenvalue of $\Delta_1$ tends to zero as the fiber volume vanishes \cite[Ex.~1.2]{Colbois}.

In contrast to the limited results for the Hodge-Laplacian, the spectrum of the Dirac operator on $S^{3}$ equipped with the Berger metrics has been successfully computed by Bär~\cite{Baer-DiracLensSpaces}. 
More recently, Kling and Schueth~\cite{KlingSchueth-DiracSpec} determined the smallest Dirac eigenvalue on $S^{3}$ for general homogeneous metrics of positive scalar curvature and proved that the Dirac spectrum determines the isometry class of the metric.

In this paper, we address the mentioned gap by explicitely describing the spectrum of the Hodge-Laplacian on 1-forms for the family of Berger spheres on $S^3 \cong \SU(2)$ and its quotient $P^3(\R)\cong \SO(3)$. These metrics arise as canonical variations of the Hopf fibration and serve as a fundamental test case for Riemannian submersions with totally geodesic fibers beyond the symmetric setting.
Our main contributions are as follows:
\begin{enumerate}
	\item We derive a general formula for the spectrum of the Hodge-Laplacian on compact homogeneous Riemannian manifolds (Proposition \ref{prop: spectrum general}), reducing the spectral problem to a combination of representation theory (branching rules) and linear algebra (diagonalization of infinitely many matrices).
	\item We apply this to derive an explicit formula for the full spectrum of $\Delta_1$ on Berger spheres (Theorem \ref{thm4:Spec(Berger)}). We observe a splitting behavior of the eigenvalues, which differs significantly from the scalar case ($p=0$). We give a geometric interpretation of this splitting in Remark \ref{rem: interpretation Berger}. The electronic supplementary material \cite{HL2025Code} provides the source code for the symbolic computation of the eigenvalues in the general homogeneous case (for low weights $k$) and visualizes the full spectrum of the Berger metrics. 
We confirm that the first eigenvalue tends to zero in the collapse limit, consistent with \cite[Ex.~1.2]{Colbois}.

\item We state the First Eigenvalue Conjecture for general homogeneous metrics in $S^3$ and $P^3(\R)$
(Conjecture \ref{conj:first_eigenvalue}), asserting that the minimum is attained either on the spectrum of functions or on the trivial representation (left-invariant 1-forms). This conjecture is derived from explicit calculations and a two-tiered numerical stress test utilizing dynamic Gershgorin bounds \cite{HL2025Code}. Subsequently, we provide a rigorous analytical proof of the conjecture. The core of this proof was autonomously discovered by an advanced AI model.
\item We apply our explicit formula for the first eigenvalue to the inverse spectral problem. This allows us to prove that the spectrum of $\Delta_1$ completely determines the isometry class of the metric (Theorem \ref{thm:spectral_determination_positive} and Corollary \ref{cor:non-positive scalar curvature isospectral}).
\end{enumerate}

\subsection*{{Autonomous AI resolution of the First Eigenvalue Conjecture \& AI methodology}}

\begin{figure}[b]
	\centering
	\begin{tikzpicture}[
		box/.style={rectangle, draw=black, thick, align=center, rounded corners, minimum height=0.9cm, text width=3.8cm, font=\small\linespread{0.9}\selectfont},
		mainbox/.style={rectangle, draw=black, thick, align=center, rounded corners, minimum height=1cm, text width=7.5cm, font=\bfseries},
		arrow/.style={-{Stealth[scale=1.2]}, thick}
		]
		
		\node[box] (low_k) at (-2.3, 2.5) {Precise Spectrum for \\ Low Weights ($k \le 1$)};
		\node[box] (gershgorin) at (2.3, 2.5) {Asymptotic Growth \\ ($\mathcal{O}(k^2)$ Gershgorin)};
		
		\node[box] (berger) at (-4.5, 0.8) {Full Spectrum \\ of Berger Spheres};
		\node[box] (monte_carlo) at (4.5, 0.8) {Monte Carlo Stress Test \\ (Generic parameters)};
		
		\node[mainbox] (conjecture) at (0, -1) {First Eigenvalue Conjecture};
		
		\node[mainbox] (proof) at (0, -2.8) {Rigorous Proof via Curl Operator \\ (Autonomous AI Discovery)};
		
		\draw[arrow] (low_k) -- (conjecture.110);
		\draw[arrow] (gershgorin) -- (conjecture.70);
		\draw[arrow] (berger) -- (conjecture.160);
		\draw[arrow] (monte_carlo) -- (conjecture.20);
		
		\draw[arrow] (conjecture) -- (proof);
		
	\end{tikzpicture}
\caption{The research workflow leading to the desired result. The formulation of the First Eigenvalue Conjecture builds on explicit low-weight spectra, asymptotic matrix bounds, and numerical stress tests. The formal proof subsequently bypasses the representation theoretical setup entirely by employing the Curl operator.}
	\label{fig:workflow}
\end{figure}
%
In the following, we outline the process that led to the formulation of the First Eigenvalue Conjecture and detail the AI-driven discovery of its rigorous proof (summarized in Figure \ref{fig:workflow}). Furthermore, we clarify the general use of AI-assisted technologies in the broader preparation of this manuscript.
	
	While the representation theoretical approach developed in Section \ref{sec:Hodge SU(2)} made it possible to compute the complete Hodge spectrum for Berger metrics, the authors were unable to determine the exact global minimum from the resulting matrices for general metrics. 

By analyzing the explicit spectra for low weights ($k=0,1$), the asymptotic $\mathcal{O}(k^2)$ growth of the Gershgorin bounds, and the exact solution for the Berger sphere sub-family, we identified a strong structural candidate for the global minimum. We verified this candidate against asymmetrical metrics using Monte Carlo simulations \cite{HL2025Code}, culminating in the precise formulation of the First Eigenvalue Conjecture.

The rigorous mathematical proof of this conjecture was discovered by the AI model ChatGPT 5.4 Pro. Operating completely autonomously for 100 minutes within a compute-intensive tier (200 USD/month, limited to 15 queries), the model chose a fundamentally different approach. Rather than choosing a representation theoretic viewpoint, it identified a geometric shortcut: factoring the Hodge Laplacian through the Curl operator. While this factorization holds on any manifold where the dimension $n$ and form degree $p$ satisfy $n=2p+1$, it is highly effective for $1$-forms in $3$ dimensions. 

By applying this transformation, the AI bypassed the algebraic framework entirely and reduced the problem to an operator relation on the round sphere. The authors verified this logic and adapted it to the paper's notation. {{Since the model did not provide citations and its internal reasoning process (Chain of Thought) remained opaque, we independently traced the conceptual roots of its strategy back to the existing literature to establish the scholarly context.}} This workflow reflects the emerging research practices discussed in \cite{Henkel-AI-Math}.

\subsubsection{Declaration of Generative AI and AI-assisted technologies in the preparation of this manuscript}
The authors acknowledge the use of the AI model ChatGPT 5.4 Pro, which autonomously discovered and formulated the core analytical proof strategy for the First Eigenvalue Conjecture (Section \ref{sec:proof_conjecture}). Additionally, the model Grok 4 was used for assistance with algebraic verifications in the proof of Proposition~\ref{prop:k1_eigenvalue_comparison}. Furthermore, the first-named author used generative AI tools (Gemini 2.5/3 Pro, ChatGPT 4o-5.5 Thinking, DeepL) to assist with the electronic supplementary material (cf.~\cite{HL2025Code}), language refinement, literature search and analysis. After using these tools, the authors reviewed and edited the content as needed and take full responsibility for the content of the publication.

	\subsection*{Acknowledgments}
	The authors would like to thank Ilka Agricola for her helpful discussions on the manuscript. The first author is particularly grateful for her support and advice as his PhD supervisor, which made his visit to Córdoba (Argentina) possible and paved the way for this collaboration. His stay was funded by the Deutsche Forschungsgemeinschaft (DFG, German Research Foundation) - 541920696.
	
	This project started during the joint workshop ``Lie Theory in Geometry,
	Algebra and Analysis'' held in Córdoba in March 2025. Both authors
	gratefully acknowledge the hospitality of the National University of
	Córdoba during this event. Furthermore, the second author thanks the
	University of Marburg for their hospitality during his visit in
	September 2025, when a substantial part of this work was carried out.
	
	Finally, the authors are grateful to Dorothee Schüth for pointing out
	the reference \cite{BransonGilkeyOrsted-HeatInv} regarding the heat
	invariants.

	\section{Preliminaries}
	
	\subsection{Hodge-Laplace operators on compact homogeneous spaces}
	Let $G$ be a compact Lie group of dimension $m$, $K$ a closed subgroup of $G$, and we set $M=G/K$. 
	We denote by $n$ the dimension of $M$. 
	We fix a decomposition at the Lie algebra level $\fg=\fk\oplus \fm$, where $\fm$ is $\Ad_{\restriction K}$-invariant. 
	Any $G$-invariant Riemannian metric $g$ on $M$ is induced by an $\Ad_{\restriction K}$-invariant inner product $\innerdots$ on $\fm$. 
	
	Let $p$ be an integer satisfying $0\leq p\leq n$. 
	The action $\Ad_{\restriction K}$ of $K$ on $\fm$ induces an action on $\bigwedge^{p}\fm^{*}_{\C}$, denoted by $\tau_p$. 
	Its associated homogeneous vector bundle is precisely the $p$-exterior cotangent bundle, that is, 
	\begin{align*}
		{\textstyle \bigwedge^{p}}\, T^*M
		= G\underset{\tau_p}{\times} {\textstyle \bigwedge^{p}} \fm^{*}_{\C}
		:=\Big(G\times {\textstyle \bigwedge^{p}} \fm^{*}_{\C}\Big)/\sim,
	\end{align*}
	where $(x,\alpha)\sim (xk,\tau_p(k^{-1})\cdot \alpha)$ for every $x\in G$, $k\in K$, and $\alpha\in {\textstyle \bigwedge^{p}} \fm^{*}_{\C}$. 
	We denote by $[x,\alpha]$ the class of $(x,\alpha)\in G\times {\textstyle \bigwedge^{p}} \fm^{*}_{\C}$ in $G{\times_{\tau_p}} {\textstyle \bigwedge^{p}} \fm^{*}_{\C}$. 
	The fiber at $xK\in M$ is $\{[x,\alpha]: \alpha\in {\textstyle \bigwedge^{p}} \fm^{*}_{\C}\}$.

	The set of smooth sections of the vector bundle ${\textstyle \bigwedge^{p}}\, T^{*}(M)$ is precisely the space of differential $p$-forms $\Omega^p(M)$. 
	We identify $\Omega^p(M)$ with 
	$$
	\mathcal{C}^{\infty}_{K}(G,{\textstyle \bigwedge^{p}} \fm^{*}_{\C}):=\{f:G\to {\textstyle \bigwedge^{p}} \fm^{*}_{\C}: \text{$f$ is smooth and }f(xk)=\tau_{p}(k^{-1})\cdot f(x)\quad\forall \, x\in G,k\in K\}
	$$ 
	as follows: 
	if $\omega\in\Omega^p(M)$, then  $\omega_{xK}:=\omega(xK)=[x,f_{\omega}(x)]$ for some
	$f_\omega \in \mathcal{C}^{\infty}_{K}(G,{\textstyle \bigwedge^{p}} \fm^{*}_{\C})$, and similarly, 
	if $f\in \mathcal{C}^{\infty}_{K}(G,{\textstyle \bigwedge^{p}} \fm^{*}_{\C})$, then 
	$xK\mapsto [x,f(x)]$ defines a $p$-form $\omega_f$. 
	
	The group $G$ acts on ${\textstyle \bigwedge^{p}}\, T^{*}(M)$ by $a\cdot [x,\alpha]=[ax,\alpha]$. 
	Furthermore, $G$ acts on $\Omega^p(M)$ by $(a\cdot \omega)(xK)= a\cdot \omega(a^{-1}xK)$. 
	This action corresponds with the action of $G$ on $\mathcal{C}^{\infty}_{K}(G,{\textstyle \bigwedge^{p}} \fm^{*}_{\C})$ given by $(a\cdot f)(x)=f(a^{-1}x)$. 
	
	From now on, we fix $g$ a $G$-invariant metric on $M$, so $(M,g)$ is a compact homogeneous Riemannian manifold. 
	
	The Hodge-Laplacian $\Delta_p$ associated to $(M,g)$ acting on $p$-forms is given by $\Delta_p=d\delta+\delta d$, where $\delta$ stands for the co-differential of $(M,g)$. 
	The Weitzenböck formula (see e.g.\ \cite{SemmelmannWeingart-QK}, \cite[Thm.~2.38]{Rosenberg-Laplacian}) gives $\Delta_p= \nabla^*\nabla +2q(R)$. If $\{E_i\}_{i=1}^n$ is a local orthonormal frame, $\{E_i^{*}\}_{i=1}^n$ is its dual coframe and $Y,Z_{1},...,Z_{p}$ are vector fields, the components of the Weitzenböck formula are given by
	\begin{equation}\label{eq:generalHodgeLaplacian-definition}
		\begin{aligned}
			\nabla^*\nabla \omega&
			=-\sum_{i=1}^n \nabla_{E_i} \big(\nabla_{E_i} \omega\big)-\nabla_{(\nabla_{E_{i}}E_{i})}\omega,\quad 
			\nabla_{Y}\omega 
			= Y(\omega) +\nabla^{\fm}_{Y}(\omega)
			\\ 
			\nabla^{\fm}_{Y}(\omega)(Z_{1},\dots,Z_{p})&
			=- \sum_{i=1}^p \omega(Z_1,\dots,Z_{i-1},\nabla_YZ_i,Z_{i+1},\dots,Z_p)
			,\\
			(q(R)\omega)_{aK}
			&=\frac{1}{2}\sum_{i<j}(E_{i}\wedge E_{j})\triangleright R(E_{i}\wedge E_{j})\triangleright \omega_{aK}, \quad 
			R(X \wedge Y): = \frac{1}{2} \sum_{i} E_i \wedge R(X,Y)E_i
			\\
			(E_{i}\wedge E_{j})\triangleright \omega_{aK}&
			=E_{j}^{*}\wedge (E_{i}\lrcorner\omega_{aK})-E_{i}^{*}\wedge (E_{j}\lrcorner\omega_{aK}),	
		\end{aligned}
	\end{equation}
	for any $\omega\in \Omega^p(G/K)\simeq \mathcal{C}^{\infty}_{K}(G,\bigwedge^{p}\fm^{*}_{\C})$.
	The definition of $\nabla^{*}\nabla$ and the definition of $2q(R)$ are based on tensors, hence they are $\mathcal{C}^{\infty}(M)$-polynomial expressions in the orthonormal frame $\{E_{i}\}_{i=1}^{n}$ \cite[Prop.~4.17, 4.21]{Lee-IRM}, i.e.\ the formula does not depend on the extension of $\{E_{i}(aK)\}_{i=1}^{n}$.
	In order to evaluate the Weitzenböck formula in the origin only, let $\{X_i\}_{i=1}^{n}$ be any orthonormal basis of $\fm$ with respect to $\innerdots$, and $\{X_i^*\}_{i=1}^{n}$ be its dual coframe. By abuse of notation, we denote the corresponding fundamental vector fields which yield an orthonormal basis only in $eK$ in the same way. Since $G$ is compact and the decomposition is reductive, it is a standard fact that $\sum \nabla_{X_i}X_i = 0$ at the origin (see e.g.\ \cite[p.~183, 184]{Besse-Einstein}). Hence, the Weitzenböck formula reduces in the origin $eK$ to
	\begin{equation}\label{eq:HodgeLaplacian-definition}
		\begin{aligned}
			(\Delta_{p}\omega)_{eK}= \sum (\nabla_{X_i}\nabla_{X_i}\omega)|_{eK}+\frac{1}{2}\sum_{i<j}((X_{i}\wedge X_{j})\triangleright R(X_{i}\wedge X_{j})\triangleright \omega)|_{eK}.
		\end{aligned}
	\end{equation}
	
	For any irreducible representation $(\varrho,V_{\varrho})$ of $G$ over the complex numbers, we have the embedding $V_\pi\otimes \Hom_K(V_{\varrho}, {\textstyle\bigwedge^p} \fm_\C) \hookrightarrow \mathcal C_K^\infty(G, {\textstyle\bigwedge^p} \fm_\C )\equiv \Omega^p(M)$ given by $v\otimes L\mapsto \omega_{v\otimes L}$, where 
	\begin{equation*}
		\omega_{v\otimes L}(xK)= L\big(\varrho(x^{-1})v\big). 
	\end{equation*}
	The Hilbert space given by the completion of $C_K^\infty(G, {\textstyle\bigwedge^p} \fm_\C )$ with respect to the inner product induced by $\innerdots$ is denoted by $L^2({\textstyle\bigwedge^p} T^*M)$. 
	Peter and Weyl Theorem decomposes it as (see e.g.\ \cite[Thm.~1.3]{Takeuchi} for functions; \cite[Thm.~5.3.6]{Wallach-HarmonicAnalysis} for general homogeneous vector bundles)
	\begin{equation}\label{eq:PeterWeyl}
		L^2\big({\textstyle\bigwedge^p} T^*M \big)
		= \widehat{\bigoplus_{\varrho\in\widehat G}} \; V_{\varrho}\otimes \Hom_K(V_{\varrho}, {\textstyle\bigwedge^p} \fm_\C^{*} ), 
	\end{equation}
	where the sum at the right is the Hilbert sum of Hilbert spaces. 
	
	The Hodge-Laplacian $\Delta_p$ leaves invariant each isotypical component $V_{\varrho}\otimes \Hom_K(V_{\varrho}, {\textstyle\bigwedge^p} \fm_\C^{*} )$. 
	The next goal is to give an expression of it for the general case.
	
	Via the canonical isomorphism $V_{\varrho}^*\otimes {\textstyle \bigwedge^{p}}\fm^{*}_{\C}\simeq \Hom(V_{\varrho}, {\textstyle \bigwedge^{p}}\fm^{*}_{\C})$, $\varphi\otimes \alpha\longmapsto \big(v\mapsto \varphi(v)\alpha\big)$, the $K$-equivariant elements corresponds to 
	\begin{align*}
		\left(V_{\varrho}^{*}\otimes {\textstyle \bigwedge^{p}}\fm^{*}_{\C} \right)^{K}
		= \Span_\C\left\{
		\varphi \otimes \alpha \mid  
		(\varrho^*( k^{-1})\cdot \varphi)\otimes \alpha=\varphi\otimes (\tau_p(k)\cdot \alpha) \;\;\forall k\in K
		\right\}
		\subset V_{\varrho}^{*}\otimes {\textstyle \bigwedge^{p}}\fm^{*}_{\C}
		.
	\end{align*}

	For any Lie algebra $\fh$ equipped with an inner product, let $\{Z_k\}$ be an orthonormal basis and $d\varrho$ be an irreducible representation of $\fh$. We set $C_\varrho^\fh := \sum_k d\varrho(Z_k)^2$ and note that this operator depends on the inner product, but not on the choice of the orthonormal basis. 
	It coincides with the image of the classical Casimir element under $\varrho$ if and only if the inner product is $\ad(\fh)$-invariant. In this case, by Schur's Lemma, it acts as a scalar on any irreducible representation, computable via Freudenthal's formula.
	
	The Levi-Civita connection induces a linear map $\nabla: \fg \to \End(\bigwedge^p \fm_\C^*)$, which is in general not a Lie algebra representation. In analogy to the above, we define the operator $C_\nabla^{\fm} := \sum_k (\nabla_{X_k})^2$, where $\{X_k\}$ is an orthonormal basis of $\fm$.
	\begin{lemma}\label{lem: formula Hodge Laplace general}
		Let $\varrho\in\widehat G$. 
		The Hodge-Laplace operator $\Delta_{p}$ preserves $V_{\varrho}\otimes \Hom_{K}(V_{\varrho}, \bigwedge^{p}\fm^{*}_{\C}) $ in \eqref{eq:PeterWeyl}.
		Moreover, for $v\in V_{\varrho}$ and $L\in \Hom_{K}(V_{\varrho}, \bigwedge^{p}\fm^{*}_{\C})$, $\Delta_p\cdot \omega_{v\otimes L} = \omega_{v\otimes (\Delta_p^{(\varrho)}\cdot L)}$, where 
		\begin{align*}
			&\Delta_{p}^{(\varrho)}\restriction \left(V_{\varrho}^{*}\bigotimes\bigwedge^{p}\fm^{*}_{\C}\right)^{K}\\
			& = -((C^{\fg}_{\varrho^*} -C^{\fk}_{\varrho^*})\otimes \Id_{\tau_p} 
			+ \Id_{\varrho^*}\otimes C^{\fm}_{\nabla} 
			+ \sum_{i=1}^{n}2 d\varrho^*(X_{i})\otimes \nabla^{\fm}_{X_{i}})
			+ \Id_{\varrho^*}\otimes 2q(R)
.
		\end{align*}
	\end{lemma}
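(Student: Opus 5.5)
Since both $\nabla^*\nabla$ and $q(R)$ are built from $G$-invariant tensors of $(M,g)$, $\Delta_p$ commutes with the left action of $G$ on $\Omega^p(M)$. I will use this together with the evaluation at the origin \eqref{eq:HodgeLaplacian-definition}.

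First, invariance of the isotypical components. The map $v\otimes L\mapsto \omega_{v\otimes L}$ is $G$-equivariant, since $(a\cdot\omega_{v\otimes L})(x)=L(\varrho(x^{-1})\varrho(a)v)=\omega_{\varrho(a)v\otimes L}(x)$. Hence $\Delta_p$ commutes with $G$. For fixed $L$, the map $v\mapsto \Delta_p\omega_{v\otimes L}$ is then a $G$-equivariant map from $V_\varrho$ into $L^2$, so its image lies in the $\varrho$-isotypic component. By Frobenius reciprocity, every equivariant map $V_\varrho\to \mathcal C^\infty_K(G,\bigwedge^p\fm^*_\C)$ has the form $v\mapsto\omega_{v\otimes L'}$ with $L'(v)=(\text{image of }v)(e)$. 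This defines $L'=:\Delta_p^{(\varrho)}L$, which is automatically $K$-equivariant. It therefore suffices to compute $(\Delta_p\omega_{v\otimes L})(e)$ as a function of $v$.

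Next, I evaluate at $eK$ using \eqref{eq:HodgeLaplacian-definition}. With the sign convention implicit in the statement, $\Delta_p=-\sum\nabla_{X_i}\nabla_{X_i}+2q(R)$ at the origin. The curvature term is pointwise and acts only on the $\bigwedge^p\fm^*$ factor, which gives $\Id\otimes 2q(R)$ immediately. For the connection term, I write $\nabla_{X}\omega=X(\omega)+\nabla^{\fm}_X\omega$ and expand
\[
\nabla_{X}\nabla_{X}\omega = X(X(\omega)) + 2\nabla^{\fm}_X(X(\omega)) + (\nabla^{\fm}_X)^2\omega,
\]
plus a term in which $X$ differentiates the coefficients of $\nabla^{\fm}_X$. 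I expect that last term to be handled by the choice of fundamental fields together with $\sum\nabla_{X_i}X_i=0$ at the origin.

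Then I identify each piece. The derivative along the fundamental field of $X$ acting on $\omega_{v\otimes L}$ at $e$ yields $L(-d\varrho(X)v)$, which in the dual picture $\varphi\otimes\alpha$ becomes $d\varrho^*(X)\varphi\otimes\alpha$ up to sign. The second derivatives give $\sum_i d\varrho^*(X_i)^2$ over an orthonormal basis of $\fm$. Choosing the inner product on $\fg$ so that $\fk\perp\fm$, this sum is $C^\fg_{\varrho^*}-C^\fk_{\varrho^*}$. The cross terms give $2\sum_i d\varrho^*(X_i)\otimes\nabla^\fm_{X_i}$, and the last piece gives $\Id\otimes C^\fm_\nabla$.

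The main obstacle is the bookkeeping of signs and of left- versus right-invariant vector fields. Functions in $\mathcal C^\infty_K$ are differentiated via right translation, $\frac{d}{dt}f(\exp(tX))$, while the fundamental vector fields on $G/K$ come from the left action. I would work with the left-invariant lift $f\mapsto \frac{d}{dt}f(x\exp tX)$ on $G$, which is what acts on $f_\omega$, and verify the following: the second-order operator agrees with $\sum\nabla_{X_i}\nabla_{X_i}$ at $e$ modulo terms killed by $\sum\nabla_{X_i}X_i=0$, and the $K$-equivariance of $L$ makes the result independent of the choice of $\fm$-basis. I would check the signs against the trivial representation, where the formula must reduce to $-C^\fm_\nabla+2q(R)$ acting on left-invariant forms.
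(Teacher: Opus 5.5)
Your proposal is essentially the paper's proof. $G$-equivariance reduces everything to evaluation at $e$; your Frobenius-reciprocity definition $L'(v)=(\Delta_p\omega_{v\otimes L})(e)$ is a cleaner form of the paper's Schur step. There you split $\nabla_X$ into the derivative $L(-d\varrho(X)v)$ plus the fibre part, expand the square, and turn $L\circ A$, $B\circ L$ into actions on the two tensor factors, with $\fk\perp\fm$ giving $C^{\fg}_{\varrho^*}-C^{\fk}_{\varrho^*}$, a point the paper leaves tacit.

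One justification needs correcting. The term in which $X$ differentiates the coefficients of $\nabla^{\fm}_X$ does not vanish because of $\sum_i\nabla_{X_i}X_i=0$; that identity only removes $\nabla_{\nabla_{X_i}X_i}\omega$. It vanishes because, in the trivialization $f_\omega$, the fibre part of $\nabla_X$ along $t\mapsto\exp(tX)$ is the constant Nomizu endomorphism; along this curve the fundamental field and your left-invariant lift agree. For $K$ trivial this endomorphism is $Y\mapsto\nabla_XY$ on left-invariant fields, as in Section~3. This is also how $\nabla^{\fm}_X$ must be read in the formula, rather than as $(\nabla_{\hat X}\hat Y)_{eK}$ for fundamental fields $\hat X,\hat Y$, which differs from it in the $[X,Y]_{\fm}$-part. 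Note also that your trivial-representation check cannot test the sign of the cross term, since $d\varrho^*=0$ there.
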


	\begin{proof}
		We identify the space of $p$-forms $\Omega^p(M)$ with the space of smooth $K$-equivariant functions $\mathcal{C}^{\infty}_{K}(G, \bigwedge^{p} \fm^{*}_{\C})$. A $p$-form $\omega$ is thus treated as a function $\omega: G \to \bigwedge^{p} \fm^{*}_{\C}$.
		
First, we establish how the natural action of $G$ on these functions translates to the Peter-Weyl decomposition. 
Since the action of $a \in G$ on a $p$-form $\omega$ is given by $(a \cdot \omega)(x) = \omega(a^{-1}x)$,
we have for any $v \in V_\varrho$ and $L \in \Hom_K(V_\varrho, \bigwedge^p \fm_\C^*)$ that
		\begin{align*}
			(a \cdot \omega_{v \otimes L})(x) &= \omega_{v \otimes L}(a^{-1}x) = L(\varrho((a^{-1}x)^{-1})v) \\
			&= L(\varrho(x^{-1}a)v) = L(\varrho(x^{-1})(\varrho(a)v)) \\
			&= \omega_{(\varrho(a)v) \otimes L}(x).
		\end{align*}
		This shows that the action of $G$ on $\mathcal{C}^{\infty}_{K}(G, \bigwedge^{p} \fm^{*}_{\C})$ corresponds precisely to the standard action of $G$ on the first factor $V_\varrho$ in the tensor product $V_\varrho \otimes \Hom_K(V_\varrho, \bigwedge^p \fm_\C^*)$.
		
		The Hodge-Laplacian $\Delta_p$ is a natural geometric operator constructed from the $G$-invariant metric $g$. As such, it commutes with the action of $G$. By Schur's Lemma, its action on each isotypical component must be of the form $\Id_{V_\varrho} \otimes \Delta_p^{(\varrho)}$ for some linear operator $\Delta_p^{(\varrho)} \in \End(\Hom_K(V_\varrho, \bigwedge^p \fm_\C^*))$. Our goal is to determine this operator by evaluating the action of $\Delta_p$ on a form $\omega_{v \otimes L}$ at the identity $e \in G$.
		
		The covariant derivative $\nabla_X$ for a fundamental vector field $X \in \fm$ splits at the identity into a tangential and a fiber part:
		\begin{equation*}
			(\nabla_X \omega)(e) = X(\omega)(e) + \nabla_X^\fm(\omega)(e),
		\end{equation*}
		where $X(\omega)$ denotes the directional derivative. 
We next compute these parts for $\omega_{v \otimes L}$. 
We have that
		\begin{align*}
			X_i(\omega_{v \otimes L})(e) 
			&= \left.\frac{d}{dt}\right|_{t=0} \omega_{v \otimes L}(\exp(tX_i)) 
			= \left.\frac{d}{dt}\right|_{t=0} L(\varrho(\exp(tX_i)^{-1})v) \\
			&= \left.\frac{d}{dt}\right|_{t=0} L(\varrho(\exp(-tX_i))v) = L(-d\varrho(X_i)v),
\\
\nabla_{X_i}^\fm(\omega_{v \otimes L})(e) &= \nabla_{X_i}(L(v)).
		\end{align*}
Now we assemble the connection Laplacian term from \eqref{eq:HodgeLaplacian-definition}:
		\begin{align*}
			-\sum_{i=1}^n (\nabla_{X_i} \nabla_{X_i} \omega_{v \otimes L})(e) = -\sum_{i=1}^n \Big( X_i(X_i(\omega)) + X_i(\nabla_{X_i}^\fm \omega) + \nabla_{X_i}^\fm(X_i(\omega)) + \nabla_{X_i}^\fm(\nabla_{X_i}^\fm \omega) \Big)(e).
		\end{align*}
		We identify each term with an operator acting on $L$:
		\begin{enumerate}
			\item $\sum_i X_i(X_i(\omega_{v \otimes L}))(e) = L( \sum_i (d\varrho(X_i))^2 v)$. This corresponds to the operator $L \mapsto L \circ (\sum_i d\varrho(X_i)^2)$.
			
			\item $\sum_i \nabla_{X_i}^\fm(\nabla_{X_i}^\fm \omega_{v \otimes L})(e) = \sum_i (\nabla_{X_i})^2 (L(v))$. This corresponds to $L \mapsto (\sum_i (\nabla_{X_i})^2) \circ L$.
			
			\item The cross terms are $\sum_i (X_i(\nabla_{X_i}^\fm \omega) + \nabla_{X_i}^\fm(X_i(\omega)))$. At the identity, both evaluate to $\nabla_{X_i}^\fm(X_i(\omega_{v \otimes L}))(e) = \nabla_{X_i}(L(-d\varrho(X_i)v))$. The sum corresponds to the operator $L \mapsto -2 \sum_i \nabla_{X_i} \circ L \circ d\varrho(X_i)$.
		\end{enumerate}
		Combining all parts with the curvature term from \eqref{eq:HodgeLaplacian-definition}, we find that $(\Delta_p \omega_{v \otimes L})(e)$ is the result of applying the operator $\Delta_p^{(\varrho)}$ to $L$ and then evaluating at $v$. The operator $\Delta_p^{(\varrho)}$ is given by:
		\begin{equation*}
			\Delta_p^{(\varrho)}(L) =-( L \circ \left(\sum_{i=1}^n d\varrho(X_i)^2\right) + \left(\sum_{i=1}^n (\nabla_{X_i})^2\right) \circ L - 2 \sum_{i=1}^n \nabla_{X_i} \circ L \circ d\varrho(X_i)) + 2q(R) \circ L.
		\end{equation*}
		This formula describes the operator on $\Hom_K(V_\varrho, \bigwedge^p \fm_\C^*)$. By identifying this space with $(V_\varrho^* \otimes \bigwedge^p \fm_\C^*)^K$, the actions by composition translate into actions on the tensor factors. Specifically, $L \circ A$ corresponds to an action on the first factor $V_\varrho^*$, and $B \circ L$ corresponds to an action on the second factor $\bigwedge^p \fm_\C^*$. This yields precisely the expression given in the lemma.
	\end{proof}
	The Peter-Weyl theorem \eqref{eq:PeterWeyl} ensures that the entire space of $p$-forms is a Hilbert sum of its isotypical components.
	Each $\varrho$-isotypical component is isomorphic to the tensor product $V_\varrho \otimes \Hom_K(V_\varrho, \bigwedge^p \fm_\C^*)$. Since the Hodge-Laplacian acts as $\Id_{V_\varrho} \otimes \Delta_p^{(\varrho)}$ on this space, its global spectrum is simply the union of the eigenvalues of the finite-dimensional operators $\Delta_p^{(\varrho)}$ acting on the intertwiner spaces $\Hom_K(V_\varrho, \bigwedge^p \fm_\C^*)$. The following proposition formalizes this conclusion.
\begin{proposition}\label{prop: spectrum general}
		The spectrum of the Hodge-Laplace operator $\Delta_{p}$ on $(G/K,g)$ is given by the union of eigenvalues of the operators 
		\begin{align*}
			\Delta_{p}^{(\varrho)}=-( (C^{\fg}_{\varrho^*} -C^{\fk}_{\varrho^*})\otimes \Id_{\tau_p} 
			+ \Id_{\varrho^*}\otimes C_{\nabla}^{\fm} 
			+ \sum_{i=1}^{n}2 d\varrho^*(X_{i})\otimes \nabla_{X_{i}}^{\fm})
			+ \Id_{\varrho^*}\otimes 2q(R),
		\end{align*}
		where $\varrho\in \widehat{G} $ such that $\Hom_{K}(V_{\varrho},\bigwedge^p \fm_\C^*) \neq 0$. The multiplicities are given by $\dim(V_{\varrho})$.
	\end{proposition}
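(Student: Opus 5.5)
The plan is to obtain the proposition directly from the Peter--Weyl decomposition \eqref{eq:PeterWeyl} combined with Lemma \ref{lem: formula Hodge Laplace general}. The argument has four steps: restrict to isotypical components, diagonalize there, check that nothing is missed, and count multiplicities.

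\textbf{Step 1: reduction to finite-dimensional blocks.} By Lemma \ref{lem: formula Hodge Laplace general}, $\Delta_p$ preserves each isotypical component $V_\varrho\otimes\Hom_K(V_\varrho,\bigwedge^p\fm^*_\C)$. On that component it acts as $\Id_{V_\varrho}\otimes\Delta_p^{(\varrho)}$, where $\Delta_p^{(\varrho)}$ is the displayed operator on $(V_\varrho^*\otimes\bigwedge^p\fm_\C^*)^K\cong\Hom_K(V_\varrho,\bigwedge^p\fm_\C^*)$. Only the $\varrho$ with $\Hom_K(V_\varrho,\bigwedge^p\fm_\C^*)\neq 0$ contribute, since the other components are zero.

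\textbf{Step 2: diagonalization on each block.} Each such block is finite dimensional: $V_\varrho$ is finite dimensional, and so is the intertwiner space. The restriction of $\Delta_p$ is symmetric with respect to the $L^2$ inner product, because $\Delta_p$ is formally self-adjoint and the components are orthogonal. It follows that $\Delta_p^{(\varrho)}$ is diagonalizable with real eigenvalues, with respect to the induced inner product on the intertwiner space. If $L$ is an eigenvector of $\Delta_p^{(\varrho)}$ with eigenvalue $\lambda$, then every $\omega_{v\otimes L}$ with $v\in V_\varrho$ is an eigenform with eigenvalue $\lambda$. This produces a $\dim V_\varrho$-dimensional eigenspace inside the component for each linearly independent eigenvector $L$.

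\textbf{Step 3: completeness of the spectrum.} Conversely, the Hilbert sum decomposition shows that the eigenforms constructed in Step 2 span a dense subspace of $L^2(\bigwedge^pT^*M)$. Since $\Delta_p$ is elliptic and essentially self-adjoint on a compact manifold, its spectrum is discrete. Any eigenform $\omega$ with eigenvalue $\mu$ has its projection onto each isotypical component in the $\mu$-eigenspace of that block, because $\Delta_p$ commutes with the orthogonal projections onto the components. Hence $\mu$ is an eigenvalue of some $\Delta_p^{(\varrho)}$. So the spectrum is exactly the union of the eigenvalues of the $\Delta_p^{(\varrho)}$.

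\textbf{Step 4: multiplicities.} The multiplicity of $\lambda$ is obtained by summing, over $\varrho$, $\dim V_\varrho$ times the multiplicity of $\lambda$ as an eigenvalue of $\Delta_p^{(\varrho)}$. This is the sense of ``multiplicities given by $\dim V_\varrho$''.

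The main (mild) obstacle is justifying that the restricted operators are diagonalizable and that no spectrum is lost in passing to the Hilbert sum. I would handle both through self-adjointness and the commutation of $\Delta_p$ with the $G$-action. That commutation is already established in Lemma \ref{lem: formula Hodge Laplace general}, so the isotypical projections commute with $\Delta_p$.
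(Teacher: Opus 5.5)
Your proposal is correct and takes the same route as the paper, which combines the Peter--Weyl Hilbert-sum decomposition \eqref{eq:PeterWeyl} with the fact from Lemma~\ref{lem: formula Hodge Laplace general} that $\Delta_p$ acts as $\Id_{V_\varrho}\otimes\Delta_p^{(\varrho)}$ on each isotypical component. Your extra steps spell out what the paper leaves implicit: self-adjointness and diagonalizability of the blocks, completeness of the spectrum via the $G$-equivariant isotypical projections, and the multiplicity of $\lambda$ as $\dim V_\varrho$ times its multiplicity in $\Delta_p^{(\varrho)}$, summed over $\varrho$.
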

	\begin{remark}
		Proposition \ref{prop: spectrum general} illustrates a general method for computing the spectrum of any homogeneous differential operator on a vector bundle over $G/K$. 
Spectral analysis on homogeneous spaces always boils down to two main tasks: 
\begin{enumerate}
			\item  A representation-theoretic branching problem to identify which irreducible representations $\varrho \in \widehat{G}$ appear in the space of sections.
			\item A linear algebra problem to compute the eigenvalues of the operator induced by the differential operator on each of the corresponding finite-dimensional intertwining spaces.
		\end{enumerate}
		
	 In the specific setting of this paper, where $G=\SU(2)$ and $K=\{\text{Id}\}$, the branching problem is trivial, as every irreducible representation appears. Consequently, the difficulty lies entirely in the second task: solving the linear algebra problem for the Hodge-Laplacian, which involves non-trivial interaction terms in the Weitzenböck formula.
	\end{remark}

	The general formula in Lemma~\ref{lem: formula Hodge Laplace general} simplifies dramatically in the classical setting of a Riemannian symmetric space, yielding the following well-known result.
	
	\begin{corollary}[Spectrum on Symmetric Spaces]
		If $(G/K,g)$ is a compact Riemannian symmetric space, the operator $\Delta_p^{(\varrho)}$ acts as a scalar on the intertwiner space $\Hom_K(V_\varrho, \bigwedge^p \fm_\C^*)$. This scalar is given by the Casimir eigenvalue $c_\fg(\varrho)$.
		Consequently, the spectrum of the Hodge-Laplacian $\Delta_p$ is the discrete set
		\begin{equation*}
			\Spec(\Delta_p) = \{ c_\fg(\varrho) \mid \varrho \in \widehat{G} \text{ such that } \Hom_K(V_\varrho, \bigwedge^p \fm_\C^*) \neq 0 \},
		\end{equation*}
		where each eigenvalue has multiplicity $\dim(V_\varrho) \cdot \dim(\Hom_K(V_\varrho, \bigwedge^p \fm_\C^*))$.
Moreover, if $\varrho$ has highest weight $\Lambda$, then
		\begin{equation*}
			c_\fg(\varrho) = \inner{\Lambda}{\Lambda + 2\delta},
		\end{equation*}
		where $\delta$ is half the sum of the positive roots and $\inner{\cdot}{\cdot}$ is the inner product on the dual of a Cartan subalgebra induced by the Killing form (up to normalization).
	\end{corollary}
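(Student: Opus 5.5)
The plan is to specialize Lemma~\ref{lem: formula Hodge Laplace general} to the symmetric case. There the metric is induced (up to scaling on each irreducible factor) by the negative of the Killing form restricted to $\fm$, and the Cartan involution gives $[\fm,\fm]\subset\fk$. The key geometric input is the standard fact that for a symmetric space the Levi-Civita connection on $\fm$ coincides with the canonical connection. Concretely, the Nomizu map vanishes: $\Lambda(X)=\tfrac12[X,\cdot]_\fm+U(X,\cdot)=0$ for $X\in\fm$, because $[X,Y]_\fm=0$ and $U\equiv 0$ by $\ad$-invariance. Hence the fiber part $\nabla^{\fm}_{X_i}$ appearing in Lemma~\ref{lem: formula Hodge Laplace general}, computed with respect to left-invariant fundamental fields at the origin, reduces to zero. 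Consequently both $C^{\fm}_\nabla$ and the cross term $\sum_i 2\,d\varrho^*(X_i)\otimes\nabla^{\fm}_{X_i}$ drop out.

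Next I handle the curvature term. For a symmetric space, $R(X,Y)Z=-[[X,Y],Z]$ with $[X,Y]\in\fk$. Substituting this into the definition of $q(R)$ in \eqref{eq:generalHodgeLaplacian-definition}, I will show that $2q(R)$ acting on $\bigwedge^p\fm_\C^*$ equals $-C^{\fk}_{\tau_p}$, the Casimir of $\fk$ (with respect to the restricted $\ad$-invariant form) in the representation $\tau_p$. This is the classical identity $q(R)=\tfrac12\,\mathrm{Cas}^{\fk}_{\tau}$ for the holonomy representation. The computation reduces to expanding $R(E_i\wedge E_j)$ in an orthonormal basis $\{Z_k\}$ of $\fk$ via $\inner{R(X,Y)Z}{W}=-\kil([X,Y],[Z,W])$, and regrouping $\sum_{i<j}$ into $\sum_k d\tau_p(Z_k)^2$. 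Getting the signs and factors of $2$ right in this regrouping is where I expect the main bookkeeping obstacle to lie.

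Putting these together, the operator becomes
\[
\Delta_p^{(\varrho)}=-(C^{\fg}_{\varrho^*}-C^{\fk}_{\varrho^*})\otimes\Id-\Id\otimes C^{\fk}_{\tau_p}.
\]
On the $K$-invariants $(V_\varrho^*\otimes\bigwedge^p\fm^*_\C)^K$, the diagonal $\fk$-action is trivial, so $\sum_k (d\varrho^*(Z_k)\otimes1+1\otimes d\tau_p(Z_k))^2=0$. Expanding this identity would in general leave a mixed term; I will instead apply it directly. Note that $L\circ d\varrho(Z)=d\tau_p(Z)\circ L$ for $K$-equivariant $L$, so $C^{\fk}_{\varrho^*}\otimes\Id$ and $\Id\otimes C^{\fk}_{\tau_p}$ agree on invariants (the order reversal gives the same square). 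Hence the two $\fk$-Casimir terms cancel, leaving $-C^{\fg}_{\varrho^*}\otimes\Id$. By Schur this is the scalar $-C^\fg_{\varrho^*}$, which for an $\ad$-invariant inner product equals the Casimir eigenvalue $c_\fg(\varrho)=\inner{\Lambda}{\Lambda+2\delta}$ (noting $c_\fg(\varrho^*)=c_\fg(\varrho)$ and the sign convention $C\le0$ for a compact group). The value itself follows from Freudenthal's formula as cited before the lemma.

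Finally, Proposition~\ref{prop: spectrum general} gives the spectrum as the union of these scalars over $\varrho$ with nonzero intertwiner space. Each contributes multiplicity $\dim V_\varrho\cdot\dim\Hom_K(V_\varrho,\bigwedge^p\fm_\C^*)$ by \eqref{eq:PeterWeyl}, since the operator is scalar on the whole intertwiner space. If $\fm$ is not irreducible, I will treat each irreducible factor with its own scaling of the Killing form, and the argument goes through factorwise.
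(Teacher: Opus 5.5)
Your proposal is correct and follows the paper's proof essentially step by step. Both arguments use the same four steps: the fiber connection terms vanish because $[\fm,\fm]\subset\fk$; $2q(R)=-C^{\fk}_{\tau_p}$ (the paper simply cites Semmelmann's Habilitation for this, where you propose to compute it); the two $\fk$-Casimir terms cancel on $K$-invariants, which you justify cleanly via $L\circ d\varrho(Z)^2=d\tau_p(Z)^2\circ L$; and Freudenthal's formula together with Proposition~\ref{prop: spectrum general} gives the eigenvalues and multiplicities.
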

	
	\begin{proof}
		For a symmetric space, several key simplifications occur in the formula for $\Delta_p^{(\varrho)}$.
		First, the connection term $\nabla_X$ vanishes for any $X \in \fm$ when evaluated at the origin. This is because $(\nabla_X^{\fm} Y)_{eK} = \frac{1}{2}[X,Y]_{eK}$ for fundamental vector fields on a normal homogeneous space, and for a symmetric space, $[\fm,\fm] \subseteq \fk$, which implies $[X,Y]_{\fm} = 0$. The vanishing of $\nabla_X^{\fm}$ for all $X \in \fm$ implies that the mixed term $2\sum d\varrho^*(X_i) \otimes \nabla_{X_{i}}^{\fm}$ and the term involving $\sum_i (\nabla_{X_i}^{\fm})^2$ are both zero. Additionally, the operators $C_{\varrho^*}^\fg$, $C_{\tau_p}^\fk$ act as scalars computable by Freudenthal's formula. The formula reduces to
		\begin{equation*}
			\Delta_p^{(\varrho)} = (c_\fg(\varrho^*) - c_\fk(\varrho^*)) \otimes \Id_{\tau_p} + \Id_{\varrho^*} \otimes 2q(R).
		\end{equation*}
		Second, it is a standard result for symmetric spaces that the curvature operator is given by the action of the Casimir operator of $\fk$, i.e., $2q(R) = -C_{\tau_p}^\fk$, see for example \cite[Lem. B.0.11]{Semmelmann-Habil}. Substituting this in, we get
		\begin{equation*}
			\Delta_p^{(\varrho)} = (c_\fg(\varrho^*) - c_\fk(\varrho^*)) \otimes \Id_{\tau_p} - \Id_{\varrho^*} \otimes C_{\tau_p}^\fk.
		\end{equation*}
The action of the Casimir of $\fk$ on the first factor must coincide with its action on the second. That is, the action of $c_\fk(\varrho^*) \otimes \Id_{\tau_p}$ is identical to that of $\Id_{\varrho^*} \otimes C_{\tau_p}^\fk$. The two terms involving $C^\fk$ thus cancel, leaving
		\begin{equation*}
			\Delta_p^{(\varrho)} = c_\fg(\varrho^*) \otimes \Id_{\tau_p}.
		\end{equation*}
		Since the Casimir eigenvalues of a representation and its dual are identical ($c_\fg(\varrho^*) = c_\fg(\varrho)$), the operator acts as the scalar $c_\fg(\varrho)$. 
The statement on the spectrum and multiplicities follows directly from Proposition~\ref{prop: spectrum general}. 
	\end{proof}

	\section{Hodge-Laplace operators on homogeneous 3-spheres}\label{sec:Hodge SU(2)}
	We now apply the general framework from the previous section to our main case of interest: homogeneous 3-spheres. 
	
	\subsection{The homogeneous space $(\SU(2), g_{\abc})$: setup and notation}\label{subsec:SU(2)}
	
	In this subsection, we assemble the specific geometric and representation-theoretic components needed for our analysis. Our primary model is the Lie group $G = \SU(2)$, which is diffeomorphic to the 3-sphere $S^3$. The results also apply to $G = \SO(3) \simeq \SU(2)/\{\pm\Id\}$, which is diffeomorphic to the real projective space $P^3(\R)$. We consider the case $K=\{\Id\}$, so $\fg=\fm=\su(2)$.
	\subsubsection{Homogeneous metrics on 3-spheres}
	We set 
	\begin{align}\label{eq2:X1X2X3}
		E_1&= \begin{pmatrix} i&0 \\ 0&-i \end{pmatrix}, &
		E_2&= \begin{pmatrix} 0&1 \\ -1&0 \end{pmatrix}, &
		E_3&= \begin{pmatrix} 0&i \\ i&0 \end{pmatrix}.
	\end{align}
	It is a simple matter to check that $\{E_1,E_2,E_3\}$ is an orthonormal basis of $\su(2)$ with respect to the $\Ad(\SU(2))$-invariant inner product $\langle X,Y\rangle_0 := -\frac18\kil_{\fg}(X,Y)= -\frac12\op{tr}(XY)$ for $X,Y\in\su(2)$.
	Here, $\kil_\fg(\cdot,\cdot)$ denotes the Killing form on $\fg\otimes_\R\C\simeq \sll(2,\C)$. 
	Furthermore, $[E_1,E_2]=2E_3$, $[E_3,E_1]=2E_2$ and $[E_2,E_3]=2E_1$. 
	
	For $a,b,c$ positive real numbers, let $\innerdots_{\abc}$ denote the inner product on $\fg$ such that $\{aE_1,bE_2,cE_3\}$ is an orthonormal basis, and let $g_{\abc}$ be the corresponding left-invariant metric on $G$. 
	Any permutation of $\abc$ does not change the isometry class of $g_{\abc}$. 
	Moreover, any homogeneous metric on $G$ is isometric to $g_{\abc}$ for some $a,b,c$ (see e.g.\ \cite{Milnor-Curvatures}, \cite[Rem. 2.2]{KlingSchueth-DiracSpec}). 
	Write $X_1=aE_1$, $X_2=bE_2$, and $X_3=cE_3$, thus $\{X_1,X_2,X_3\}$ is an orthonormal basis of $\innerdots_{\abc}$.
	Furthermore, we denote by $\{X_1^*, X_2^*, X_3^*\}$ the basis of $\fg^*$ dual to $\{X_1,X_2,X_3\}$. 
	
	\subsubsection{The Levi-Civita connection}
	The Levi-Civita connection of the metric $g_{\abc}$ can be computed using the Koszul formula. For left-invariant vector fields $X,Y,Z \in \su(2)$, it states
	\begin{equation*}
		2\inner{\nabla_X Y}{Z} = \inner{[X,Y]}{Z} - \inner{[Y,Z]}{X} + \inner{[Z,X]}{Y}.
	\end{equation*}
	As an example, we compute the components of $\nabla_{X_1}X_2$. 
	We have that
	\begin{align*}
		2\inner{\nabla_{X_1} X_2}{X_3} &= \inner{[X_1,X_2]}{X_3} - \inner{[X_2,X_3]}{X_1} + \inner{[X_3,X_1]}{X_2} \\
		&= \inner{\tfrac{2ab}{c}X_3}{X_3} - \inner{\tfrac{2bc}{a}X_1}{X_1} + \inner{\tfrac{2ca}{b}X_2}{X_2} 
		= 2(\tfrac{ab}{c} - \tfrac{bc}{a} + \tfrac{ca}{b}).
	\end{align*}
	The other components are zero, e.g., $2\inner{\nabla_{X_1} X_2}{X_1} = \inner{[X_1,X_2]}{X_1} - \dots = 0$. 
It follows that $\nabla_{X_1}X_2 = (\frac{ab}{c} - \frac{bc}{a} + \frac{ac}{b})X_3$. The remaining terms are computed analogously, resulting in the following formulas:
	\begin{equation}\label{eq:christoffel}
		\begin{aligned}
\nabla_{X_1}X_2 &= (\tfrac{ab}{c}+\tfrac{ac}{b}-\tfrac{bc}{a})\,X_3, & 
\qquad
\nabla_{X_2}X_1 &= (-\tfrac{ab}{c}+\tfrac{ac}{b}-\tfrac{bc}{a})\,X_3, 
\\
\nabla_{X_1}X_3 &= (-\tfrac{ab}{c}-\tfrac{ac}{b}+\tfrac{bc}{a})\,X_2, & 
\nabla_{X_3}X_1 &= (-\tfrac{ab}{c}+\tfrac{ac}{b}+\tfrac{bc}{a})\,X_2, 
\\
\nabla_{X_2}X_3 &= (+\tfrac{ab}{c}-\tfrac{ac}{b}+\tfrac{bc}{a})\,X_1, & 
\nabla_{X_3}X_2 &= (+\tfrac{ab}{c}-\tfrac{ac}{b}-\tfrac{bc}{a})\,X_1.
		\end{aligned}
	\end{equation}
	As usual, we denote the Christoffel symbols by  $\Gamma_{i,j}^k=X_k^*(\nabla_{X_i}X_j)$.
	\subsubsection{The Curvature Operator}
	We can view the Riemannian curvature tensor $R$ as a symmetric operator on the space of 2-vectors, $R: \Lambda^2\su(2) \to \Lambda^2\su(2)$, defined by
	\begin{equation}
		R(X \wedge Y) = \frac{1}{2} \sum_{i} X_i \wedge R(X,Y)X_i.
	\end{equation}
	For the metric $g_{\abc}$, this operator is diagonal with respect to the orthonormal basis $\{X_1 \wedge X_2, X_1 \wedge X_3, X_2 \wedge X_3\}$. We demonstrate the computation for the first diagonal entry $r_{12}$, defined by $R(X_1 \wedge X_2) = r_{12} (X_1 \wedge X_2)$. Expanding the definition yields the relation
	\begin{equation*}
		r_{12} = \inner{R(X_1 \wedge X_2)}{X_1 \wedge X_2} = -\inner{R(X_1, X_2)X_2}{X_1}.
	\end{equation*}
	We first compute the vector field term $R(X_1, X_2)X_2$. 
Using $[X_1, X_2] = \frac{2ab}{c}X_3$, we have that
\begin{align*}
R(X_1, X_2)X_2 
	&= \nabla_{X_1} \nabla_{X_2} X_2 - \nabla_{X_2} \nabla_{X_1} X_2 - \nabla_{[X_1, X_2]} X_2 
\\ &
	= 0 - \Gamma_{12}^3 \nabla_{X_2} X_3 - \tfrac{2ab}{c} \,\Gamma_{32}^1 X_1 
	= - (\Gamma_{12}^3 \Gamma_{23}^1 + \tfrac{2ab}{c} \Gamma_{32}^1) X_1.
\end{align*}
Substituting this back into the expression for $r_{12}$ gives
$
r_{12} = \Gamma_{12}^3 \Gamma_{23}^1 + \tfrac{2ab}{c} \Gamma_{32}^1.
$
	Substituting the expressions for the Christoffel symbols from \eqref{eq:christoffel} and simplifying leads to
	\begin{equation*}
		r_{12} = 3\frac{a^2b^2}{c^2} - \frac{a^2c^2}{b^2} - \frac{b^2c^2}{a^2} - 2a^2 - 2b^2 + 2c^2.
	\end{equation*}
	The other eigenvalues $r_{13}$ and $r_{23}$ are obtained by cyclic permutation of the indices and parameters. The curvature operator is thus given by
	\begin{equation}\label{eq:curvature_eigenvalues}
		R = \diag(r_{12}, r_{13}, r_{23}),
	\end{equation}
	with
\begin{align*}
r_{13} &= -\frac{a^2b^2}{c^2} + 3\frac{a^2c^2}{b^2} - \frac{b^2c^2}{a^2} - 2a^2 + 2b^2 - 2c^2,
\\
r_{23} &= -\frac{a^2b^2}{c^2} - \frac{a^2c^2}{b^2} + 3\frac{b^2c^2}{a^2} + 2a^2 - 2b^2 - 2c^2.
\end{align*}
	
	\subsubsection{Representation theory}
	The irreducible unitary representations of $\SU(2)$ are indexed by non-negative integers $k \in \mathbb{N}_0$. We denote the $(k+1)$-dimensional representation by $(\varrho_k, V_k)$. A standard model for $V_k$ is the space of homogeneous polynomials of degree $k$ in two complex variables. Throughout this paper, we will use the basis $\{P_l : 0 \le l \le k\}$ for $V_k$ given by $P_l(z,w)=z^lw^{k-l}$. 
	The action of the Lie algebra basis $\{E_1, E_2, E_3\}$ on these basis vectors is given by (see the proof of Lemma~3.1 in \cite{Lauret-SpecSU(2)})
	\begin{equation}\label{eq:rep_action}
		\begin{aligned}
			d\varrho_k(E_1) \cdot P_l &
			= (k-2l)\mi \, P_l
			,\\
			d\varrho_k(E_2) \cdot P_l &
			= -l\, P_{l-1} + (k-l)\, P_{l+1}
			,\\
			d\varrho_k(E_3) \cdot P_l &= -l\mi\, P_{l-1} - (k-l)\mi\, P_{l+1}.
		\end{aligned}
	\end{equation}
	Here we adopt the convention that $P_l = 0$ if $l < 0$ or $l > k$.
	
	\subsection{Computation of the Hodge-Laplacian matrix}
	
	With the representation-theoretic and geometric data at hand, we can now construct the matrix for the Hodge-Laplacian $\Delta_1$ on each isotypic component $V_k \otimes \su(2)_\C^*$. We begin by computing the constant, fiber-wise acting part of the operator.
	
	\begin{lemma}\label{lem:curvature_operator}
		The operator $2q(R)$ acts diagonally on the dual basis $\{X_1^*, X_2^*, X_3^*\}$. Its action is given by
		\begin{equation}\label{eq:curvature_term}
			\begin{aligned}
				2q(R)(X_1^*) &= 2\left(-\frac{a^{2}b^{2}}{c^{2}}-\frac{a^{2}c^{2}}{b^{2}}+\frac{b^{2}c^{2}}{a^{2}}+2a^{2}\right) X_1^*, \\
				2q(R)(X_2^*) &= 2\left(-\frac{a^{2}b^{2}}{c^{2}}-\frac{b^{2}c^{2}}{a^{2}}+\frac{a^{2}c^{2}}{b^{2}}+2b^{2}\right) X_2^*, \\
				2q(R)(X_3^*) &= 2\left(-\frac{b^{2}c^{2}}{a^{2}}-\frac{a^{2}c^{2}}{b^{2}}+\frac{a^{2}b^{2}}{c^{2}}+2c^{2}\right) X_3^*.
			\end{aligned}
		\end{equation}
	\end{lemma}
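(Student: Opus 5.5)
The plan is to evaluate the curvature term in \eqref{eq:generalHodgeLaplacian-definition} directly, using the fact from \eqref{eq:curvature_eigenvalues} that the curvature operator $R$ is diagonal in the orthonormal basis $\{X_1\wedge X_2, X_1\wedge X_3, X_2\wedge X_3\}$. Since $K$ is trivial and the metric is left-invariant, $q(R)$ is a constant endomorphism of $\fg^*_\C$. It therefore suffices to compute $2q(R)(X_l^*)=\sum_{i<j}(X_i\wedge X_j)\triangleright R(X_i\wedge X_j)\triangleright X_l^*$ for $l=1,2,3$. By diagonality of $R$, the pair $(i,j)$ contributes $r_{ij}\,(X_i\wedge X_j)\triangleright(X_i\wedge X_j)\triangleright X_l^*$.

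First I would record the elementary action of $X_i\wedge X_j$ on $1$-forms from \eqref{eq:generalHodgeLaplacian-definition}. On $1$-forms the interior products are scalars, so
\[
(X_i\wedge X_j)\triangleright X_l^*=\delta_{il}X_j^*-\delta_{jl}X_i^*.
\]
This is the infinitesimal rotation in the $(i,j)$-plane. Applying it twice gives $(X_i\wedge X_j)\triangleright(X_i\wedge X_j)\triangleright X_l^* = -X_l^*$ if $l\in\{i,j\}$, and $0$ otherwise. Consequently $2q(R)$ is diagonal in $\{X_1^*,X_2^*,X_3^*\}$, with
\[
2q(R)X_1^*=-(r_{12}+r_{13})X_1^*,\qquad 2q(R)X_2^*=-(r_{12}+r_{23})X_2^*,\qquad 2q(R)X_3^*=-(r_{13}+r_{23})X_3^*.
\]
This is the expected statement that on $1$-forms $2q(R)$ is the Ricci endomorphism, since $-r_{ij}$ is the sectional curvature of the plane $X_i\wedge X_j$ under the sign convention $r_{12}=-\inner{R(X_1,X_2)X_2}{X_1}$.

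Then I substitute the explicit values of $r_{12}, r_{13}, r_{23}$. For instance,
\[
r_{12}+r_{13}=2\tfrac{a^2b^2}{c^2}+2\tfrac{a^2c^2}{b^2}-2\tfrac{b^2c^2}{a^2}-4a^2,
\]
because the $b^2$ and $c^2$ terms cancel. Its negative is exactly the coefficient stated for $X_1^*$. The other two entries follow by the cyclic symmetry already used to define $r_{13}$ and $r_{23}$.

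The only real obstacle is bookkeeping of signs and normalizations: the factor $\tfrac12$ in $q(R)$, the factor $\tfrac12$ in the definition of $R(X\wedge Y)$, and the order in which the two $\triangleright$ actions are applied. I would check these against one independent case, the round metric $a=b=c=1$. There all $r_{ij}=-1$, and the formula gives $2q(R)=2\,\Id$, which is $\Ric=2\,\Id$ for the unit sphere (sectional curvature $1$ with this normalization, since $[E_i,E_j]=2E_k$). This confirms the overall sign and scaling.
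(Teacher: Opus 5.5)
Your proposal is correct and follows the paper's proof essentially step for step. It uses the same elementary action $(X_i\wedge X_j)\triangleright X_l^*=\delta_{il}X_j^*-\delta_{jl}X_i^*$, the same double action $-(\delta_{il}+\delta_{jl})X_l^*$ giving $2q(R)X_1^*=-(r_{12}+r_{13})X_1^*$ and cyclically, and then substitutes the $r_{ij}$; your round-metric check ($a=b=c=1$ gives $2q(R)=2\,\Id=\Ric$) is a useful consistency test that the paper replaces by citing the identification of these eigenvalues with the Ricci operator.
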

	
	\begin{proof}
		The curvature operator $R:\Lambda^{2}\su(2)\to\Lambda^{2}\su(2)$ is diagonal with respect to the basis $\{X_{1}\wedge X_{2}, X_{1}\wedge X_{3}, X_{2}\wedge X_{3}\}$. 
Its eigenvalues 
are (cf.~\eqref{eq:curvature_eigenvalues})
		\begin{align*}
			r_{12} &= 3\tfrac{a^2b^2}{c^2} - \tfrac{a^2c^2}{b^2} - \tfrac{b^2c^2}{a^2} - 2a^2 - 2b^2 + 2c^2, \\
			r_{13} &= -\tfrac{a^2b^2}{c^2} + 3\tfrac{a^2c^2}{b^2} - \tfrac{b^2c^2}{a^2} - 2a^2 + 2b^2 - 2c^2, \\
			r_{23} &= -\tfrac{a^2b^2}{c^2} - \tfrac{a^2c^2}{b^2} + 3\tfrac{b^2c^2}{a^2} + 2a^2 - 2b^2 - 2c^2.
		\end{align*}
		The operator $2q(R)$ is defined as the sum of the actions of the following components:
		\begin{equation*}
			2q(R) = r_{12} (X_1 \wedge X_2) \triangleright (X_1 \wedge X_2) \triangleright 
			+ r_{13} (X_1 \wedge X_3) \triangleright (X_1 \wedge X_3) \triangleright 
			+ r_{23} (X_2 \wedge X_3) \triangleright (X_2 \wedge X_3) \triangleright.
		\end{equation*}
		A direct computation shows that the action of a simple 2-vector on the basis $\{X_k^*\}$ is given by
		\begin{equation*}
			(X_i \wedge X_j) \triangleright X_k^* = \delta_{ik}X_j^* - \delta_{jk}X_i^*,
		\end{equation*}
		and applying the action twice yields
		\begin{equation*}
			(X_i \wedge X_j) \triangleright (X_i \wedge X_j) \triangleright X_k^* = -(\delta_{ik} + \delta_{jk})X_k^*.
		\end{equation*}
		This shows that $2q(R)$ is diagonal in the basis $\{X_k^*\}$. The eigenvalue corresponding to $X_1^*$ is obtained by summing the contributions from the three terms, that is, 
		\begin{equation*}
			2q(R) (X_1^*) = r_{12}(-X_1^*) + r_{13}(-X_1^*) + r_{23}(0) = (-r_{12} - r_{13}) X_1^*.
		\end{equation*}
		The eigenvalues for $X_2^*$ and $X_3^*$ are computed analogously to be $(-r_{12} - r_{23})$ and $(-r_{13} - r_{23})$, respectively. Substituting the expressions for $r_{ij}$ yields the formulas stated in the lemma. We note that these eigenvalues are precisely the eigenvalues of the classical Ricci operator $\Ric$, which is for $\Delta_{1}$ confirmed by the literature, see \cite[Thm.~2.44]{Rosenberg-Laplacian}.
	\end{proof}

	\begin{lemma}\label{lem:connection_casimir}
		The operator $C_\nabla^{\su(2)} = \sum_{k=1}^3 \nabla_{X_k}^2$ acts diagonally on the dual basis $\{X_1^*, X_2^*, X_3^*\}$. Its action is given by
		\begin{align*}
			C_\nabla^{\su(2)}(X_1^*) &= -2\left(\frac{b^2c^2}{a^2} + \frac{a^2c^2}{b^2} + \frac{a^2b^2}{c^2} - 2a^2\right) X_1^*, \\
			C_\nabla^{\su(2)}(X_2^*) &= -2\left(\frac{b^2c^2}{a^2} + \frac{a^2c^2}{b^2} + \frac{a^2b^2}{c^2} - 2b^2\right) X_2^*, \\
			C_\nabla^{\su(2)}(X_3^*) &= -2\left(\frac{b^2c^2}{a^2} + \frac{a^2c^2}{b^2} + \frac{a^2b^2}{c^2} - 2c^2\right) X_3^*.
		\end{align*}
	\end{lemma}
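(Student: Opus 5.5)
The plan is to compute the operators $\nabla_{X_k}$ on $\su(2)^*_\C$ explicitly as $3\times 3$ matrices in the dual basis $\{X_1^*,X_2^*,X_3^*\}$, square them, and add. By \eqref{eq:generalHodgeLaplacian-definition}, for a $1$-form we have $(\nabla^{\fm}_{X_k}\alpha)(Z)=-\alpha(\nabla_{X_k}Z)$. Therefore
\[
\nabla_{X_k}X_j^*=-\sum_l \Gamma_{k,l}^{\,j}\,X_l^*,
\]
so $\nabla_{X_k}$ is represented by minus the transpose of the matrix $(\Gamma_{k,l}^{\,j})_{j,l}$. The Christoffel symbols come from \eqref{eq:christoffel}.

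The structural point is that $\nabla_{X_k}$ is skew-adjoint with respect to $\innerdots_{\abc}$, since the connection is metric. Hence $\Gamma_{k,l}^{\,j}=-\Gamma_{k,j}^{\,l}$. Moreover, by \eqref{eq:christoffel}, $\nabla_{X_k}X_k=0$ and $\nabla_{X_k}X_l$ is a multiple of the third basis vector. So $\nabla_{X_k}$ kills $X_k^*$ and acts on $\Span\{X_l^*,X_m^*\}$, where $\{k,l,m\}=\{1,2,3\}$, as the rotation-type matrix
\[
\begin{pmatrix}0&\Gamma_{k,l}^{\,m}\\ -\Gamma_{k,l}^{\,m}&0\end{pmatrix}
\]
(up to sign convention). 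Consequently $\nabla_{X_k}^2$ is diagonal: it is $0$ on $X_k^*$ and $-(\Gamma_{k,l}^{\,m})^2$ on both $X_l^*$ and $X_m^*$. Summing over $k$ immediately gives diagonality of $C_\nabla^{\su(2)}$, with
\[
C_\nabla^{\su(2)}X_1^*=-\big((\Gamma_{2,3}^{\,1})^2+(\Gamma_{3,2}^{\,1})^2\big)X_1^*,
\]
and cyclically for $X_2^*$ and $X_3^*$.

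The last step is an algebraic simplification. Set $\alpha=\tfrac{bc}{a}$, $\beta=\tfrac{ac}{b}$ and $\gamma=\tfrac{ab}{c}$. From \eqref{eq:christoffel},
\[
\Gamma_{2,3}^{\,1}=\gamma-\beta+\alpha,\qquad \Gamma_{3,2}^{\,1}=\gamma-\beta-\alpha.
\]
The sum of their squares is $2(\gamma-\beta)^2+2\alpha^2=2(\alpha^2+\beta^2+\gamma^2)-4\beta\gamma$. Since $\beta\gamma=a^2$, this equals $2\big(\tfrac{b^2c^2}{a^2}+\tfrac{a^2c^2}{b^2}+\tfrac{a^2b^2}{c^2}-2a^2\big)$, which is the claimed coefficient. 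The cases $X_2^*$ and $X_3^*$ follow by the same computation with the roles of $a,b,c$ permuted, using $\alpha\gamma=b^2$ and $\alpha\beta=c^2$.

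The only place requiring care is the sign and index bookkeeping in translating the connection on vector fields into its action on the dual coframe, i.e.\ transpose and sign. Because only squares of Christoffel symbols enter the final answer, however, an overall sign error in the action would not affect the result. I would double-check the case $X_1^*$ by hand and rely on cyclic symmetry for the other two.
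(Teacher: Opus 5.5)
Your proposal is correct and follows essentially the same route as the paper: you write $\nabla_{X_k}X_j^*$ in terms of the Christoffel symbols, observe that $\nabla_{X_k}$ kills $X_k^*$ and rotates the complementary pair so that its square is diagonal, and reduce the $X_1^*$ coefficient to $-(\Gamma_{2,3}^{\,1})^2-(\Gamma_{3,2}^{\,1})^2$. Your simplification via $\beta\gamma=a^2$ (and cyclically $\alpha\gamma=b^2$, $\alpha\beta=c^2$) is just a tidier form of the paper's final expansion.
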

	
	\begin{proof}
		We compute the action of $C_\nabla^{\su(2)}$ on each basis covector $X_j^*$. The action of a covariant derivative on a dual basis vector is given by $(\nabla_{X_k} X_j^*)(X_l) = -X_j^*(\nabla_{X_k} X_l) = -\Gamma_{k,l}^j$. This implies
		\begin{equation*}
			\nabla_{X_k} X_j^* = -\sum_{l=1}^3 \Gamma_{k,l}^j X_l^*=\sum_{l=1}^3 \Gamma_{k,j}^l X_l^*.
		\end{equation*}
		We now compute $C_\nabla^{\su(2)}(X_1^*) = (\nabla_{X_1}^2 + \nabla_{X_2}^2 + \nabla_{X_3}^2)(X_1^*)$.
		First, $\nabla_{X_1}(X_1^*) = -(\Gamma_{1,1}^1 X_1^* + \Gamma_{1,2}^1 X_2^* + \Gamma_{1,3}^1 X_3^*)$. From \eqref{eq:christoffel}, we see that $\nabla_{X_1}X_j$ never has an $X_1$ component, so $\Gamma_{1,j}^1 = 0$ for all $j$. Thus, $\nabla_{X_1}(X_1^*) = 0$, and consequently $\nabla_{X_1}^2(X_1^*) = 0$.
		Secondly, $\nabla_{X_2}(X_1^*) = -(\Gamma_{2,1}^1 X_1^* + \Gamma_{2,2}^1 X_2^* + \Gamma_{2,3}^1 X_3^*) = -\Gamma_{2,3}^1 X_3^*$, since only $\Gamma_{2,3}^1$ is non-zero.
		Applying $\nabla_{X_2}$ again, we get
		\begin{align*}
			\nabla_{X_2}^2(X_1^*) = \nabla_{X_2}(-\Gamma_{2,3}^1 X_3^*) &= -\Gamma_{2,3}^1 \nabla_{X_2}(X_3^*) \\
			&= -\Gamma_{2,3}^1 \left( -(\Gamma_{2,1}^3 X_1^* + \Gamma_{2,2}^3 X_2^* + \Gamma_{2,3}^3 X_3^*) \right) \\
			&= \Gamma_{2,3}^1 \Gamma_{2,1}^3 X_1^*.
		\end{align*}
		The other terms vanish because $\Gamma_{2,2}^3=0$ and $\Gamma_{2,3}^3=0$.
		Similarly, $\nabla_{X_3}(X_1^*) = -(\Gamma_{3,1}^1 X_1^* + \Gamma_{3,2}^1 X_2^* + \Gamma_{3,3}^1 X_3^*) = -\Gamma_{3,2}^1 X_2^*$.
		Applying $\nabla_{X_3}$ again, 
		\begin{align*}
			\nabla_{X_3}^2(X_1^*) = \nabla_{X_3}(-\Gamma_{3,2}^1 X_2^*) &= -\Gamma_{3,2}^1 \nabla_{X_3}(X_2^*) \\
			&= -\Gamma_{3,2}^1 \left( -(\Gamma_{3,1}^2 X_1^* + \Gamma_{3,2}^2 X_2^* + \Gamma_{3,3}^2 X_3^*) \right) \\
			&= \Gamma_{3,2}^1 \Gamma_{3,1}^2 X_1^*.
		\end{align*}
		Combining the terms, we conclude that 
		\begin{equation*}
			C_\nabla^{\su(2)}(X_1^*) = (\Gamma_{2,3}^1 \Gamma_{2,1}^3 + \Gamma_{3,2}^1 \Gamma_{3,1}^2) X_1^*.
		\end{equation*}

It follows from \eqref{eq:christoffel} that
$
\Gamma_{2,3}^1 = (\tfrac{ab}{c}-\tfrac{ac}{b}+\tfrac{bc}{a})$,
$
\Gamma_{2,1}^3 = (-\tfrac{ab}{c}+\tfrac{ac}{b}-\tfrac{bc}{a})$, 
$
\Gamma_{3,2}^1 = (\tfrac{ab}{c}-\tfrac{ac}{b}-\tfrac{bc}{a})$, and 
$
\Gamma_{3,1}^2 = (-\tfrac{ab}{c}+\tfrac{ac}{b}+\tfrac{bc}{a}).
$
		The coefficient is
		\begin{align*}
\Gamma_{2,3}^1 \Gamma_{2,1}^3 + \Gamma_{3,2}^1 \Gamma_{3,1}^2
			&= -(\tfrac{ab}{c}-\tfrac{ac}{b}+\tfrac{bc}{a})^2 - (\tfrac{ab}{c}-\tfrac{ac}{b}-\tfrac{bc}{a})^2 \\
			&= -2\left(\tfrac{a^2b^2}{c^2} + \tfrac{a^2c^2}{b^2} + \tfrac{b^2c^2}{a^2} - 2a^2\right).
		\end{align*}
		This proves the formula for $X_1^*$. The calculations for $X_2^*$ and $X_3^*$ are analogous and yield the stated results by cyclic permutation of indices and parameters. 
	\end{proof}

	We are now in a position to state the explicit form of the Hodge-Laplacian on each isotypic component. By combining the results for the curvature (Lemma \ref{lem:curvature_operator}) and the connection term (Lemma \ref{lem:connection_casimir}), the general formula from Lemma \ref{lem: formula Hodge Laplace general} simplifies significantly. The sum of the fiber-wise operators becomes a simple diagonal matrix, leading to the following expression for the operator:
	\begin{align*}
		\Delta_1^{(\varrho)} = -C_\varrho^{\su(2)}\otimes \Id_{\su(2)_{\C}^*} - \sum_{i=1}^{n}2 d\varrho^*(X_{i})\otimes \nabla_{X_{i}}^{\su(2)} + \underbrace{4\Id_{V_{\varrho}}\otimes\diag\left(\frac{b^{2}c^{2}}{a^{2}},\frac{a^{2}c^{2}}{b^{2}},\frac{a^{2}b^{2}}{c^{2}}\right)}_{= -C_{\nabla}^{\su(2)} + 2q(R)}.
	\end{align*}
	The following proposition provides the detailed matrix entries of this operator.
	
\begin{proposition}\label{prop:Hodge-Laplace_on_k_rep}
The matrix of the Hodge-Laplacian $\Delta_1^{(k)}$ restricted to $\Hom(V_{\varrho_k},\fg_\C^*)\simeq V_{\varrho_k}^*\otimes\fg_\C^*$ is given with respect to the ordered basis $\{e_{(r,p)}=P_{r}\otimes X_{p}^{*}\}_{0\leq r\leq k, 1\leq p\leq 3}$. Its non-zero entries are as follows:
		\begin{align*}
			D_{r,p} &:= (\Delta_1^{(k)})_{(r,p),(r,p)} =
a^2(k-2r)^2 + (b^2+c^2)(k(2r+1)-2r^2)
+ 4\diag\left(\tfrac{b^{2}c^{2}}{a^{2}}, \tfrac{a^{2}c^{2}}{b^{2}}, \tfrac{a^{2}b^{2}}{c^{2}}\right)_{p,p}, \\
			A_r &:= (\Delta_1^{(k)})_{(r,2),(r,3)} = -(\Delta_1^{(k)})_{(r,3),(r,2)} = -2a\mi(k - 2r) \gamma_1, \\
			B_{r,+} &:= (\Delta_1^{(k)})_{(r,1),(r+1,2)} = -(\Delta_1^{(k)})_{(r+1,2),(r,1)} = 2\mi c(r+1) \gamma_3 ,\\
			C_{r,+}&:= {(\Delta_1^{(k)})_{(r,1),(r+1,3)}} = -(\Delta_1^{(k)})_{(r+1,3),(r,1)} = 2b(r+1) \gamma_2, \\
			B_{r,-} &:= {(\Delta_1^{(k)})_{(r,1),(r-1,2)}} = -(\Delta_1^{(k)})_{(r-1,2),(r,1)} = 2\mi c(k-r+1) \gamma_3 ,\\
			C_{r,-} &:= {(\Delta_1^{(k)})_{(r,1),(r-1,3)}} = -(\Delta_1^{(k)})_{(r-1,3),(r,1)} = -2b(k-r+1) \gamma_2, \\
			E_r &:= (\Delta_1^{(k)})_{(r,p),(r-2,p)} = -(C_{\varrho})_{r,r-2} = -(b^2-c^2)(k-r+1)(k-r+2), \\
			F_r &:= (\Delta_1^{(k)})_{(r,p),(r+2,p)} = -(C_{\varrho})_{r,r+2} = -(b^2-c^2)(r+2)(r+1),
		\end{align*}
		where the constants $\gamma_1, \gamma_2, \gamma_3$ are the Christoffel symbols
		\begin{align*}
			\gamma_1 =\Gamma_{1,3}^{2}= -\frac{ab}{c} - \frac{ac}{b} + \frac{bc}{a}, \quad \gamma_2 =\Gamma_{2,3}^{1}= \frac{ab}{c} - \frac{ac}{b} + \frac{bc}{a}, \quad \gamma_3 =\Gamma_{3,2}^{1} = \frac{ab}{c} - \frac{ac}{b} - \frac{bc}{a}.
		\end{align*}
	The explicit representation matrix can be found in \eqref{eq:explicit_Hodge_Laplace}.
		
	\end{proposition}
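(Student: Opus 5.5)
We start from the simplified form of Lemma~\ref{lem: formula Hodge Laplace general} stated just before the proposition,
\[
\Delta_1^{(\varrho)} = -C_{\varrho^*}^{\su(2)}\otimes \Id - 2\sum_{i=1}^{3} d\varrho^*(X_i)\otimes \nabla_{X_i}^{\su(2)} + 4\,\Id\otimes\diag\left(\tfrac{b^2c^2}{a^2},\tfrac{a^2c^2}{b^2},\tfrac{a^2b^2}{c^2}\right).
\]
Here $K$ is trivial, so $C^{\fk}=0$. The last term comes from adding Lemma~\ref{lem:curvature_operator} and Lemma~\ref{lem:connection_casimir}: for $X_1^*$, the contributions $2(-\frac{a^2b^2}{c^2}-\frac{a^2c^2}{b^2}+\frac{b^2c^2}{a^2}+2a^2)$ and $2(\frac{b^2c^2}{a^2}+\frac{a^2c^2}{b^2}+\frac{a^2b^2}{c^2}-2a^2)$ sum to $4\frac{b^2c^2}{a^2}$, and $X_2^*,X_3^*$ follow by cyclic symmetry. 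This term feeds only the diagonal entries $D_{r,p}$.

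The plan is to compute the remaining two operators separately in the basis $P_r\otimes X_p^*$. Following the proof of Lemma~\ref{lem: formula Hodge Laplace general}, I identify the first factor with $V_k$ acted on via $L\mapsto L\circ d\varrho(\cdot)$. Equivalently, I let $d\varrho_k$ act on the basis $P_r$ by \eqref{eq:rep_action} and track the transpose. The sign and transpose conventions must be fixed once, by checking a single entry such as $A_r$ against an independent computation.

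\textbf{Step 1 (Casimir part).} With $X_1=aE_1$, $X_2=bE_2$, $X_3=cE_3$, I compute
\[
C_\varrho = a^2 d\varrho(E_1)^2 + b^2 d\varrho(E_2)^2 + c^2 d\varrho(E_3)^2 .
\]
The $E_1$ term gives $-a^2(k-2r)^2$ on the diagonal. Squaring the two-term formulas for $E_2$ and $E_3$ produces $P_{r\pm2}$ and $P_r$ components. The $P_r$ coefficient is $-(r(k-r+1)+(k-r)(r+1))$ from each term. The shift-by-two coefficients appear with opposite signs in the $E_2$ and $E_3$ terms, because $\mi^2=-1$, and this yields the factor $(b^2-c^2)$.

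The diagonal sum simplifies to $k(2r+1)-2r^2$, which gives the first two summands of $D_{r,p}$. The off-diagonal coefficients give $E_r$ and $F_r$, with the index products $(k-r+1)(k-r+2)$ and $(r+1)(r+2)$. I will check which of the two belongs to the $r-2$ position and which to the $r+2$ position, using the transpose convention fixed above.

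\textbf{Step 2 (mixed term).} From the proof of Lemma~\ref{lem:connection_casimir},
\[
\nabla_{X_k}X_j^*=-\sum_l\Gamma_{k,l}^j X_l^*.
\]
Thus $\nabla_{X_1}$ mixes only $X_2^*$ and $X_3^*$, with coefficient $\pm\Gamma^2_{1,3}=\pm\gamma_1$. Since $d\varrho(X_1)$ is diagonal with eigenvalue $a(k-2r)\mi$, this produces $A_r$ and its antisymmetric partner.

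Similarly, $\nabla_{X_2}$ couples $X_1^*$ with $X_3^*$ via $\Gamma^1_{2,3}=\gamma_2$, and $d\varrho(X_2)$ shifts $r\mapsto r\pm1$ with real coefficients $-r$ and $k-r$. This gives the real entries $C_{r,\pm}$. Likewise $\nabla_{X_3}$ couples $X_1^*$ with $X_2^*$ via $\gamma_3$, and the imaginary coefficients in $d\varrho(E_3)$ give the entries $B_{r,\pm}$.

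I need the identities relating $\Gamma^1_{2,1}$ to $\Gamma^3_{2,3}$ and similar ones. From \eqref{eq:christoffel}, $\Gamma_{k,l}^j=-\Gamma_{k,j}^l$, so each $\nabla_{X_i}$ is skew in the basis $\{X_p^*\}$. This skewness explains the antisymmetry pattern $(\Delta)_{(r,p),(s,q)}=-(\Delta)_{(s,q),(r,p)}$.

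\textbf{Main obstacle.} The main difficulty is bookkeeping: consistently handling the dual representation and transpose, the factor $-2$, and the $\mi$'s. This bookkeeping is what decides whether $B_{r,+}$ carries $(r+1)$ and $B_{r,-}$ carries $(k-r+1)$, and it fixes the signs. I would settle these by fully verifying the case $k=1$ directly against the proof of Lemma~\ref{lem: formula Hodge Laplace general}, and then matching general $r$ by the index shift.
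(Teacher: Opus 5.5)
Your route is the paper's: insert \eqref{eq:rep_action}, \eqref{eq:christoffel} and the constant term $4D$, with $D=\diag\bigl(\tfrac{b^2c^2}{a^2},\tfrac{a^2c^2}{b^2},\tfrac{a^2b^2}{c^2}\bigr)$, into the simplified formula, and sort the entries by $s-r$. Your Step~1 computations and your matching of $A_r$, $B_{r,\pm}$, $C_{r,\pm}$ with $\gamma_1,\gamma_3,\gamma_2$ are correct.

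\textbf{The last sentence of Step~2 fails.} Skewness of $\nabla_{X_i}$ in the basis $\{X_p^*\}$ gives only antisymmetry in the $\fg_\C^*$-indices: $(\Delta_1^{(k)})_{(r,p),(s,q)}=-(\Delta_1^{(k)})_{(r,q),(s,p)}$ for $p\neq q$. Exchanging $(r,p)\leftrightarrow(s,q)$ also replaces $(d\varrho_k(X_i))_{r,s}$ by $(d\varrho_k(X_i))_{s,r}$, and by \eqref{eq:rep_action} these differ: $(d\varrho_k(X_2))_{r,r+1}=-b(r+1)$ but $(d\varrho_k(X_2))_{r+1,r}=b(k-r)$, and $(d\varrho_k(X_3))_{r,r+1}=-c\mi(r+1)$ but $(d\varrho_k(X_3))_{r+1,r}=-c\mi(k-r)$. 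Hence $(\Delta_1^{(k)})_{(r+1,3),(r,1)}=2b(k-r)\gamma_2=-C_{r+1,-}$, which has the same sign as $C_{r,+}$. Likewise $(\Delta_1^{(k)})_{(r+1,2),(r,1)}=-2\mi c(k-r)\gamma_3=-B_{r+1,-}$. This is what \eqref{eq:explicit_Hodge_Laplace} actually shows, e.g.\ $-C_{1,-}$ in row $(1,3)$, column $(0,1)$, and $-B_{0,+}$ in row $(0,2)$, column $(1,1)$. So the clauses $=-(\Delta_1^{(k)})_{(r+1,2),(r,1)}$, $=-(\Delta_1^{(k)})_{(r+1,3),(r,1)}$, etc.\ in the statement are index slips for $-(\Delta_1^{(k)})_{(r,2),(r+1,1)}$, $-(\Delta_1^{(k)})_{(r,3),(r+1,1)}$, etc. Read literally they are false in general, so no argument can prove them. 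The paper's proof makes the same slip when it says the transposed entries follow from the anti-symmetry of the $\nabla^{\su(2)}$ matrices.

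\textbf{Your convention checks cannot settle the points you leave open.} $A_r$ lies in a diagonal block where $d\varrho_k(X_1)$ is diagonal, so it cannot detect a transpose on the $V_{\varrho_k}$-factor. At $k=1$ there are no $r\pm2$ couplings, and $r+1=k-r$ for $r=0$. So neither the placement of $(r+1)(r+2)$ versus $(k-r+1)(k-r+2)$, nor the choice between $(r+1)$ and $(k-r)$ in $B_{r,+}$, $C_{r,+}$, is visible there.

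Instead, fix the realization once. Apply Lemma~\ref{lem: formula Hodge Laplace general} to the representative $\varrho_k^*$ of the class of $\varrho_k$, so that $V_\varrho^*=V_{\varrho_k}$ and $d\varrho^*=d\varrho_k$. The operator on $V_{\varrho_k}\otimes\fg_\C^*$ is then $-C_{\varrho_k}\otimes\Id-2\sum_i d\varrho_k(X_i)\otimes\nabla_{X_i}+\Id\otimes4D$. Use column-convention matrices: $d\varrho_k(X_i)$ from \eqref{eq:rep_action}, and $\nabla_{X_i}$ from $\nabla_{X_k}X_j^*=-\sum_l\Gamma_{k,l}^jX_l^*$. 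Its entries are
\[
(\Delta_1^{(k)})_{(r,p),(s,q)}=-(C_{\varrho_k})_{r,s}\delta_{p,q}-2\sum_{i=1}^3(d\varrho_k(X_i))_{r,s}(\nabla_{X_i})_{p,q}+4\delta_{r,s}\delta_{p,q}D_{p,p},
\]
which is exactly the paper's starting formula. Nothing is then left to choose. For example, $(C_{\varrho_k})_{r,r+2}$, the $P_r$-coefficient of $C_{\varrho_k}P_{r+2}$, equals $(b^2-c^2)(r+1)(r+2)$, which places $F_r$. The case split $s=r,\,r\pm1,\,r\pm2$ then gives every entry, including the partners of $B_{r,\pm}$, $C_{r,\pm}$ as in \eqref{eq:explicit_Hodge_Laplace}. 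With this change your Steps~1 and~2 become the paper's proof.
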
	
	\begin{proof}
		The matrix entries $(\Delta_1^{(k)})_{(r,p),(s,q)}$ of the operator with respect to the basis $\{e_{(r,p)} = P_r \otimes X_p^*\}_{0 \le r \le k, 1 \le p \le 3}$ are determined by the formula
		\begin{equation*}
			(\Delta_1^{(k)})_{(r,p),(s,q)} = -(C_{\varrho_k})_{r,s}\delta_{p,q} - 2\sum_{i=1}^3 (d\varrho_k(X_i))_{r,s} (\nabla_{X_i}^{\su(2)})_{p,q} + 4\delta_{r,s}\delta_{p,q} D_{p,p},
		\end{equation*}
		where $D = \diag(\frac{b^2c^2}{a^2}, \frac{a^2c^2}{b^2}, \frac{a^2b^2}{c^2})$. The non-zero entries of the component matrices for the Casimir operator $C_{\varrho_k}$, the Lie algebra action $d{\varrho_k}(X_i)$, and the connection $\nabla_{X_i}^{\su(2)}$ are known for the indexing $0 \le r, s \le k$ (cf.~\cite{Lauret-SpecSU(2)}, Remark \ref{rem:casimir_correction} and \eqref{eq:christoffel}):
		\begin{itemize}
			\item $(C_{\varrho_k})_{r,r} = -\left( a^2(k-2r)^2 + (b^2+c^2)(k(2r+1)-2r^2) \right)$.
			\item $(C_{\varrho_k})_{r,r+2} = (b^2-c^2)(r+2)(r+1)$.
			\item $(C_{\varrho_k})_{r,r-2} = (b^2-c^2)(k-r+1)(k-r+2)$.
			\item $(d{\varrho_k}(X_1))_{r,r} = a\mi(k-2r)$.
			\item $(d{\varrho_k}(X_2))_{r-1,r} = -br$ and $(d{\varrho_k}(X_2))_{r+1,r} = b(k-r)$.
			\item $(d{\varrho_k}(X_3))_{r-1,r} = -c\mi r$ and $(d{\varrho_k}(X_3))_{r+1,r} = -c\mi(k-r)$.
			\item $(\nabla_{X_1}^{\su(2)})_{2,3}=\Gamma_{1,3}^{2} = \gamma_1 = -(\nabla_{X_1}^{\su(2)})_{3,2}$.
			\item $(\nabla_{X_2}^{\su(2)})_{1,3}=\Gamma_{2,3}^{1} = \gamma_2 = -(\nabla_{X_2}^{\su(2)})_{3,1}$.
			\item $(\nabla_{X_3}^{\su(2)})_{1,2}=\Gamma_{3,2}^{1} = \gamma_3 = -(\nabla_{X_3}^{\su(2)})_{2,1}$.
		\end{itemize}
		All other entries of these component matrices are zero. We now compute the matrix entries of $\Delta_1^{(k)}$ by considering each case for the indices $(s,q)$.
		
		\textit{Case 1: $(s=r, q=p)$.}
		The term with the sum over $i$ vanishes, since $(d{\varrho_k}(X_i))_{r,r}$ is only non-zero for $i=1$, while $(\nabla_{X_1}^{\su(2)})_{p,p}=0$ for all $p$. Thus, the entry is given by
		\begin{equation*}
			(\Delta_1^{(k)})_{(r,p),(r,p)} = -(C_{\varrho_k})_{r,r} + 4D_{p,p} = a^2(k-2r)^2 + (b^2+c^2)(k(2r+1)-2r^2) + 4D_{p,p}.
		\end{equation*}
		
		\textit{Case 2: $(s=r, q\neq p)$.}
		The terms with $C_{\varrho_k}$ and $D$ are zero. The sum over $i$ reduces to the $i=1$ term, as $(d{\varrho_k}(X_2))_{r,r}=(d{\varrho_k}(X_3))_{r,r}=0$. This yields
		\begin{equation*}
			(\Delta_1^{(k)})_{(r,p),(r,q)} = -2 (d{\varrho_k}(X_1))_{r,r} (\nabla_{X_1}^{\su(2)})_{p,q} = -2a\mi(k-2r)(\nabla_{X_1}^{\su(2)})_{p,q}.
		\end{equation*}
		The only non-zero entries occur for $(p,q)=(2,3)$ and $(3,2)$, resulting in
		\begin{align*}
			(\Delta_1^{(k)})_{(r,3),(r,2)} &= 2a\mi(k-2r)\gamma_1	, \\
			(\Delta_1^{(k)})_{(r,2),(r,3)} &= -2a\mi(k-2r)\gamma_1.
		\end{align*}
		
		\textit{Case 3: $(s=r+1)$.}
		The terms with $C_{\varrho_k}$ and $D$ are zero. The non-zero contributions come from the sum over $i=2,3$. The relevant matrix entries for the action $d{\varrho_k}$ are $(d{\varrho_k}(X_i))_{r,r+1}$.
		\begin{align*}
			(\Delta_1^{(k)})_{(r,p),(r+1,q)} &= -2 (d{\varrho_k}(X_2))_{r,r+1} (\nabla_{X_2}^{\su(2)})_{p,q} - 2 (d{\varrho_k}(X_3))_{r,r+1} (\nabla_{X_3}^{\su(2)})_{p,q} \\
			&= -2(-b(r+1))(\nabla_{X_2}^{\su(2)})_{p,q} - 2(-c\mi(r+1))(\nabla_{X_3}^{\su(2)})_{p,q} \\
			&= 2b(r+1)(\nabla_{X_2}^{\su(2)})_{p,q} + 2c\mi(r+1)(\nabla_{X_3}^{\su(2)})_{p,q}.
		\end{align*}
		The only non-zero entries occur when $(p,q)$ is $(1,3)$ or $(1,2)$ (and their transposes):
		\begin{align*}
			(\Delta_1^{(k)})_{(r,1),(r+1,2)} &= 2c\mi(r+1)(\nabla_{X_3}^{\su(2)})_{1,2} = 2c\mi(r+1)\gamma_3, \\
			(\Delta_1^{(k)})_{(r,1),(r+1,3)} &= 2b(r+1)(\nabla_{X_2}^{\su(2)})_{1,3} = 2b(r+1)\gamma_2.
		\end{align*}
		The entries for the transposed indices, e.g., $(\Delta_1^{(k)})_{(r+1,3),(r,1)}$, follow from the anti-symmetry of the $\nabla^{\su(2)}$ matrices.
		
		\textit{Case 4: $(s=r-1)$.}
		Similarly, the relevant matrix entries are $(d{\varrho_k}(X_i))_{r,r-1}$ for $i=2,3$.
		\begin{align*}
			(\Delta_1^{(k)})_{(r,p),(r-1,q)} &= -2 (d{\varrho_k}(X_2))_{r,r-1} (\nabla_{X_2}^{\su(2)})_{p,q} - 2 (d{\varrho_k}(X_3))_{r,r-1} (\nabla_{X_3}^{\su(2)})_{p,q} \\
			&= -2(b(k-r+1))(\nabla_{X_2}^{\su(2)})_{p,q} - 2(-c\mi(k-r+1))(\nabla_{X_3}^{\su(2)})_{p,q} \\
			&= -2b(k-r+1)(\nabla_{X_2}^{\su(2)})_{p,q} + 2c\mi(k-r+1)(\nabla_{X_3}^{\su(2)})_{p,q}.
		\end{align*}
		The non-zero entries are
		\begin{align*}
			(\Delta_1^{(k)})_{(r,1),(r-1,2)} &= 2c\mi(k-r+1)(\nabla_{X_3}^{\su(2)})_{1,2} = 2c\mi(k-r+1)\gamma_3, \\
			(\Delta_1^{(k)})_{(r,1),(r-1,3)} &= -2b(k-r+1)(\nabla_{X_2}^{\su(2)})_{1,3} = -2b(k-r+1)\gamma_2	.
		\end{align*}
		Again, the entries for the transposed indices follow from anti-symmetry.
		
		\textit{Case 5: $(s=r\pm2)$.}
		In these cases, only the Casimir term $-(C_{\varrho_k})_{r,s}\delta_{p,q}$ contributes, which is non-zero only if $p=q$.
		\begin{align*}
			(\Delta_1^{(k)})_{(r,p),(r+2,p)} &= -(C_{\varrho_k})_{r,r+2} =- (b^2-c^2)(r+2)(r+1), \\
			(\Delta_1^{(k)})_{(r,p),(r-2,p)} &= -(C_{\varrho_k})_{r,r-2} =- (b^2-c^2)(k-r+1)(k-r+2).
		\end{align*}
		
		\textit{Case 6: $|s-r| > 2$.}
		All component matrices $(C_{\varrho_k})_{r,s}$, $(d{\varrho_k}(X_i))_{r,s}$, and $\delta_{r,s}$ are zero. Thus, all other matrix entries are zero. This completes the derivation.
	\end{proof}
	\begin{remark}\label{rem:casimir_correction}
		The formulas for the off-diagonal entries of the Casimir operator used here, i.e.\ for the terms $(C_{\varrho_k})_{r,r\pm2}$, include a sign correction to the expressions stated in \cite[Lem.~3.1]{Lauret-SpecSU(2)}. 
		We note that this sign correction does not affect any further results of the cited paper.
	\end{remark}

	\begin{remark}
		The implementation of the Hodge-Laplacian matrices and the symbolic computation of their eigenvalues can be found in the accompanying electronic supplementary material \cite[Sec.~1--3]{HL2025Code}.
		The Hodge-Laplacian $\Delta_1^{(k)}$ is a block-penta-diagonal matrix, where the $3 \times 3$ blocks are indexed by $r=0, \dots, k$:
		\begin{equation}\label{eq:block_matrix_structure}
			\Delta_1^{(k)} = 
			\begin{pmatrix}
				\mathbf{M}_0 & \mathbf{K}_0^+ & \mathbf{F}_0 & & & \mathbf{0} \\
				\mathbf{K}_1^- & \mathbf{M}_1 & \mathbf{K}_1^+ & \mathbf{F}_1 & & \\
				\mathbf{E}_2 & \mathbf{K}_2^- & \mathbf{M}_2 & \mathbf{K}_2^+ & \ddots & \\
				& \ddots & \ddots & \ddots & \ddots & \\
				\mathbf{0} & & & \mathbf{E}_k & \mathbf{K}_k^- & \mathbf{M}_k
			\end{pmatrix},
		\end{equation}
and the blocks are given by
		\begin{align*}
			\mathbf{M}_r &= \begin{pmatrix} D_{r,1} & 0 & 0 \\ 0 & D_{r,2} & A_r \\ 0 & -A_r & D_{r,3} \end{pmatrix}, \quad
			\mathbf{K}_r^{\pm} = \begin{pmatrix} 0 & B_{r,\pm} & C_{r,\pm} \\ -B_{r,\pm} & 0 & 0 \\ -C_{r,\pm} & 0 & 0 \end{pmatrix}, \\
			\mathbf{E}_r &= \diag(E_r, E_r, E_r), \quad\quad\quad\:\: \mathbf{F}_r = \diag(F_r, F_r, F_r).
		\end{align*}
		Here, $\mathbf{K}_r^+$ is the block in the upper diagonal and $\mathbf{K}_r^-$ is in the lower diagonal. The explicit formulas for all scalar entries are given in Proposition~\ref{prop:Hodge-Laplace_on_k_rep}.
		More explicitly, the matrix takes the form
		\begin{equation}\label{eq:explicit_Hodge_Laplace}
				\begin{pmatrix}
					D_{0,1} & 0 & 0 & 0 & B_{0,+} & C_{0,+} & F_0 & 0 & 0 & \dots \\
					0 & D_{0,2} & A_0 & -B_{0,+} & 0 & 0 & 0 & F_0 & 0 & \dots \\
					0 & -A_0 & D_{0,3} & -C_{0,+} & 0 & 0 & 0 & 0 & F_0 & \dots \\
					0 & B_{1,-} & C_{1,-} & D_{1,1} & 0 & 0 & 0 & B_{1,+} & C_{1,+} & \dots \\
					-B_{1,-} & 0 & 0 & 0 & D_{1,2} & A_1 & -B_{1,+} & 0 & 0 & \dots \\
					-C_{1,-} & 0 & 0 & 0 & -A_1 & D_{1,3} & -C_{1,+} & 0 & 0 & \dots \\
					E_2 & 0 & 0 & 0 & B_{2,-} & C_{2,-} & D_{2,1} & 0 & 0 & \dots \\
					0 & E_2 & 0 & -B_{2,-} & 0 & 0 & 0 & D_{2,2} & A_2 & \dots \\
					0 & 0 & E_2 & -C_{2,-} & 0 & 0 & 0 & -A_2 & D_{2,3} & \dots \\
					\vdots & \vdots & \vdots & \vdots & \vdots & \vdots & \vdots & \vdots & \vdots & \ddots
				\end{pmatrix}.
		\end{equation}

	\end{remark}
	\subsection{The First Eigenvalue Conjecture}\label{subsec:firsteigenvalue}
	
	The smallest non-zero eigenvalue $\lambda_1(\Delta_1)$ of the Hodge-Laplacian is of particular interest. Our analysis of the operator's matrix structure suggests that this eigenvalue occurs for a small representation index $k$. One of those verifications can be found in \cite[Sec.~4]{HL2025Code}. We therefore compute the spectra for the first two isotypic components, $k=0$ and $k=1$, explicitly. The implementation of the Hodge-Laplacian matrices and the symbolic computation of their eigenvalues can be found in the accompanying Jupyter Notebook \cite[Sec.~1--3]{HL2025Code}.
	
	For $k=0$, the operator $\Delta_1^{(0)}$ is a diagonal $3 \times 3$ matrix, yielding the eigenvalues
	\begin{equation}\label{eq:spec_k0}
		\frac{4b^2c^2}{a^2}, \quad \frac{4a^2c^2}{b^2}, \quad \frac{4a^2b^2}{c^2}.
	\end{equation}
	
	For $k=1$, the $6 \times 6$ matrix $\Delta_1^{(1)}$ is given by
	\begin{align*}
		\resizebox{\textwidth}{!}{%
			$
			\setlength{\arraycolsep}{2.5pt} 
			\renewcommand{\arraystretch}{1.5} 
			\begin{pmatrix}
			a^{2} + b^{2} + \frac{4 \, b^{2} c^{2}}{a^{2}} + c^{2} & 0 & 0 & 0 & 2 i \, b^{2} - \frac{2 i \, b^{2} c^{2}}{a^{2}} - 2 i \, c^{2} & 2 \, b^{2} + \frac{2 \, b^{2} c^{2}}{a^{2}} - 2 \, c^{2} \\\\[6mm]
			0 & a^{2} + b^{2} + \frac{4 \, a^{2} c^{2}}{b^{2}} + c^{2} & 2 i \, a^{2} + \frac{2 i \, a^{2} c^{2}}{b^{2}} - 2 i \, c^{2} & -2 i \, a^{2} + \frac{2 i \, a^{2} c^{2}}{b^{2}} + 2 i \, c^{2} & 0 & 0 \\\\[6mm]
			0 & -2 i \, a^{2} + 2 i \, b^{2} - \frac{2 i \, a^{2} b^{2}}{c^{2}} & a^{2} + b^{2} + \frac{4 \, a^{2} b^{2}}{c^{2}} + c^{2} & 2 \, a^{2} - 2 \, b^{2} - \frac{2 \, a^{2} b^{2}}{c^{2}} & 0 & 0 \\\\[6mm]
			0 & 2 i \, b^{2} - \frac{2 i \, b^{2} c^{2}}{a^{2}} - 2 i \, c^{2} & -2 \, b^{2} - \frac{2 \, b^{2} c^{2}}{a^{2}} + 2 \, c^{2} & a^{2} + b^{2} + \frac{4 \, b^{2} c^{2}}{a^{2}} + c^{2} & 0 & 0 \\\\[6mm]
			-2 i \, a^{2} + \frac{2 i \, a^{2} c^{2}}{b^{2}} + 2 i \, c^{2} & 0 & 0 & 0 & a^{2} + b^{2} + \frac{4 \, a^{2} c^{2}}{b^{2}} + c^{2} & -2 i \, a^{2} - \frac{2 i \, a^{2} c^{2}}{b^{2}} + 2 i \, c^{2} \\\\[6mm]
			-2 \, a^{2} + 2 \, b^{2} + \frac{2 \, a^{2} b^{2}}{c^{2}} & 0 & 0 & 0 & 2 i \, a^{2} - 2 i \, b^{2} + \frac{2 i \, a^{2} b^{2}}{c^{2}} & a^{2} + b^{2} + \frac{4 \, a^{2} b^{2}}{c^{2}} + c^{2}\\[6mm]
		\end{pmatrix}$
	}
	\end{align*}
 One can check that it has three distinct eigenvalues, each with multiplicity 2. One eigenvalue corresponds to the exact 1-forms
	\begin{equation*}
		\lambda_{\text{exact}} := a^2+b^2+c^2.
	\end{equation*}
	The other two eigenvalues, corresponding to coclosed eigenforms, are given by
	\begin{align*}
		\lambda_{\text{coclosed}}^{\pm} :=&\frac{
			2\, a^{4} b^{4}
			+\bigl(2\, a^{4} + a^{2} b^{2} + 2\, b^{4}\bigr) c^{4}
			+\bigl(a^{4} b^{2} + a^{2} b^{4}\bigr) c^{2}}{a^{2} b^{2} c^{2}}\\
		&\pm\frac{ 2\, \sqrt{a^{4} b^{4} + \bigl(a^{4} - a^{2} b^{2} + b^{4}\bigr) c^{4} - \bigl(a^{4} b^{2} + a^{2} b^{4}\bigr) c^{2}}\,
			\bigl(a^{2} b^{2} + (a^{2}+b^{2}) c^{2}\bigr)}
		{a^{2} b^{2} c^{2}}.
	\end{align*}
	The following theorem establishes the crucial relationship between these eigenvalues.
	
	\begin{proposition}\label{prop:k1_eigenvalue_comparison}
		For $a,b,c>0$, the first eigenvalue of the union of the spectra of $\Delta_1^{(0)}$ and $\Delta_1^{(1)}$ is given by
		\[
		\lambda_1=\min\!\left(\frac{4a^2b^2}{c^2},\; \frac{4a^2c^2}{b^2},\;\frac{4b^2c^2}{a^2},\;a^2+b^2+c^2\right).
		\]
	\end{proposition}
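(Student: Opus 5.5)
The union of the two spectra is
\[
\Big\{\tfrac{4b^2c^2}{a^2},\tfrac{4a^2c^2}{b^2},\tfrac{4a^2b^2}{c^2}\Big\}\cup\{\lambda_{\text{exact}},\lambda^+_{\text{coclosed}},\lambda^-_{\text{coclosed}}\}.
\]
The four quantities in the claimed minimum all belong to this set. It therefore suffices to show that $\lambda^\pm_{\text{coclosed}}\ge \mu$, where $\mu$ denotes the claimed minimum. Since $\lambda^+_{\text{coclosed}}\ge\lambda^-_{\text{coclosed}}$, only $\lambda^-_{\text{coclosed}}\ge\mu$ needs proof.

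\emph{Change of variables.} I will first pass to the variables
\[
u=\tfrac{bc}{a},\qquad v=\tfrac{ac}{b},\qquad w=\tfrac{ab}{c},
\]
which are arbitrary positive reals. Then $a^2=vw$, $b^2=uw$, $c^2=uv$, and the $k=0$ eigenvalues become $4u^2,4v^2,4w^2$. Also $\lambda_{\text{exact}}=e:=uv+vw+wu$. Dividing through by $a^2b^2c^2=u^2v^2w^2$, the rational part of $\lambda^\pm_{\text{coclosed}}$ becomes
\[
m:=2(u^2+v^2+w^2)+e.
\]
The radical term similarly becomes a symmetric expression $\sqrt{\Delta(u,v,w)}$ with $\Delta$ a polynomial. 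Thus $\lambda^\pm_{\text{coclosed}}=m\pm\sqrt{\Delta}$ are the two roots of
\[
q(t)=t^2-2mt+p,\qquad p=m^2-\Delta,
\]
and $p$ is a symmetric polynomial in $u,v,w$ that I will compute explicitly. Because the whole problem is symmetric under permuting $(u,v,w)$, which corresponds to permuting $(a,b,c)$, I may assume $u\le v\le w$. Then
\[
\mu=\min(4u^2,\,e).
\]

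\emph{Reduction to a root-location criterion.} The inequality $\lambda^-_{\text{coclosed}}\ge\mu$ is equivalent to the two conditions
\[
\text{(i)}\ \ \mu\le m \qquad\text{and}\qquad \text{(ii)}\ \ q(\mu)\ge 0,
\]
that is, $\mu$ lies to the left of, or on, the smaller root of $q$. Condition (i) is immediate, because $\mu\le e<m$.

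\emph{The main obstacle.} Condition (ii) is a polynomial inequality in $u,v,w$. I would split it into the two cases $\mu=4u^2$ (when $4u^2\le e$) and $\mu=e$ (when $e\le 4u^2$).

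In the case $\mu=e$, I expect $q(e)$ to factor nicely. The reason is that $e$ and $\lambda_{\text{exact}}$ are related through the curl factorization: the coclosed eigenvalues should be squares of curl eigenvalues, and $e$ is naturally linked to those. I would first check with the supplementary code whether $q(e)$ is a product of squares, or whether it is nonnegative on the region $e\le4u^2$.

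In the case $\mu=4u^2$, I would substitute $v=u+s$ and $w=u+s+r$ with $s,r\ge0$. The aim is to show that $q(4u^2)$, possibly after using the constraint $4u^2\le e$, is a polynomial in $u,s,r$ with nonnegative coefficients, or a sum of squares. If positivity of coefficients fails, I would instead write $q(\mu)$ as an explicit combination of the constraint polynomials $e-4u^2$, $v-u$ and $w-v$ with nonnegative multipliers. This algebraic verification is where I expect most of the effort, and where computer algebra would be needed.
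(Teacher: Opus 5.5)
\textbf{There is a genuine gap: condition (ii) is never proved.} Your reduction is correct as far as it goes. The substitution $u=bc/a$, $v=ac/b$, $w=ab/c$ is a bijection of $\R_{>0}^3$ compatible with permutations, your formula for $m$ is right, and the criterion ``$\mu\le m$ and $q(\mu)\ge 0$'' is equivalent to $\lambda^-_{\text{coclosed}}\ge\mu$. But the argument stops exactly where the content of the proposition lies. You never compute $p$, and in both cases condition (ii) is left to ``I expect'' and to a computer-algebra search, so as written this is a plan rather than a proof. The heuristic for your first case is also wrong. With $s=u+v+w$ one gets $q(e)=4e\,(4e-s^2)$. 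This is not a product of squares: it is negative, e.g., at $(u,v,w)=(1,1,5)$. So a proof of that case must genuinely use $e\le 4u^2$, and you do not say how.

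\textbf{The missing idea is to simplify the radical.} One finds $\Delta=4s^2Q$ with $Q=u^2+v^2+w^2-uv-vw-wu$. Hence $m=s^2+Q$, $p=(s^2-Q)^2=9e^2$, and $\lambda^\pm_{\text{coclosed}}=(s\pm\sqrt Q)^2$; these are squares of curl eigenvalues, as you suspected. Consequently
\[
q(4u^2)=\bigl((2u-s)^2-Q\bigr)\bigl((2u+s)^2-Q\bigr)=(3vw-uv-uw)\,(8u^2+7uv+7uw+3vw).
\]
This is positive whenever $u$ is the smallest of $u,v,w$. Also $4u^2\le 9u^2\le m$ in that case. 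Together these give $\lambda^-_{\text{coclosed}}\ge 4u^2\ge\mu$ with no case distinction, so your case $\mu=e$ is superfluous.

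This is the paper's proof in other coordinates. The paper writes $\lambda^-_{\text{coclosed}}=(M-S)^2/(a^2b^2c^2)$, where $M=uvw\,s$ and $S=uvw\sqrt Q$. Its estimate \eqref{eq:star} reads $\sqrt Q\le u+v-w$ whenever $3uv\ge w(u+v)$, which gives $\sqrt{\lambda^-_{\text{coclosed}}}\ge 2w$. The paper then splits into cases to find an index where that hypothesis holds; choosing the smallest variable makes it automatic.
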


\begin{proof} 
It suffices to prove that $\lambda_1 \leq \lambda_{\text{coclosed}}^{-}$. 
It is a simple matter to check that 
\begin{align}\label{eq:lambda_coclosed}
\lambda_{\text{coclosed}}^{-}& 
= \frac{(M-S)^2}{a^2b^2c^2},
&
\text{where }
\begin{cases}
M=a^2b^2+(a^2+b^2)c^2,\\
S=\sqrt{a^4b^4+(a^4-a^2b^2+b^4)c^4-a^2b^2(a^2+b^2)c^2}.
\end{cases}
\end{align}

We will require the elementary estimate for $S$:
\begin{equation}\label{eq:star}
\text{If } 3c^2\ge a^2+b^2, \text{ then } 
			S\le (a^2+b^2)c^2 - a^2b^2. 
\end{equation}
Indeed, $\bigl((a^2+b^2)c^2-a^2b^2\bigr)^2 - S^2=a^2b^2c^2\,(3c^2-a^2-b^2)\ge 0.$
		
We now proceed by a case analysis depending on whether the minimum for $\lambda_1$ is attained. 
We first assume that $\lambda_1=a^2+b^2+c^2$.
It follows that
$a^2+b^2+c^2\leq \frac{4a^2b^2}{c^2}$,
$a^2+b^2+c^2\leq \frac{4a^2c^2}{b^2}$, and
$a^2+b^2+c^2\leq \frac{4b^2c^2}{a^2}$.
From the latter two we obtain
$
(a^2+b^2+c^2)^2\leq 
16c^4
$, thus $3c^2\geq a^2+b^2$.
Now, \eqref{eq:star} forces $S\leq (a^2+b^2)c^2-a^2b^2$, 
which gives $M-S\ge 2a^2b^2$.
It follows from \eqref{eq:lambda_coclosed} that 
\[
\lambda_{\text{coclosed}}^{-}
\geq 
\frac{4a^2b^2}{c^2}
\geq a^2+b^2+c^2=\lambda_1
.
\]

We now suppose that $\lambda_1=\min\{ \frac{4a^2b^2}{c^2},\; \frac{4a^2c^2}{b^2},\; \frac{4b^2c^2}{a^2}\}$.
Since $\lambda_1$ and $\lambda_{\text{coclosed}}^{-}$ are invariant under permutations of $a,b,c$, we can assume without losing generality that $\lambda_1 =\frac{4a^2b^2}{c^2}$.
Since $\frac{4a^2b^2}{c^2}\leq \frac{4a^2c^2}{b^2}$ and $\frac{4a^2b^2}{c^2}\leq \frac{4b^2c^2}{a^2}$,  it follows that $c^2\geq \max\{a^2,b^2\}> \tfrac{a^2+b^2}{3}$. 
Using \eqref{eq:star}, we obtain again that 
$M-S\ge 2a^2b^2$, and \eqref{eq:lambda_coclosed} gives
$
\lambda_{\text{coclosed}}^{-}
\geq 
\frac{4a^2b^2}{c^2}
=\lambda_1.
$
This completes the proof. 
\end{proof}
For higher representation weights ($k \ge 2$), determining the exact minimum from the block matrices $\Delta_1^{(k)}$ analytically is difficult. Since there are infinitely many representations ($k \to \infty$), we use a two-tiered computer-assisted approach in the electronic supplement \cite[Sec.~4]{HL2025Code} to locate the global minimum.

First, the matrix entries grow asymptotically as $\mathcal{O}(k^2)$. For a large proportion of metrics satisfying the condition $a^2 > |b^2-c^2|$ (and cyclic permutations), the diagonal entries dominate this growth. We apply the Gershgorin Circle Theorem dynamically: beyond a certain threshold $k_0$, the theorem provides a strictly increasing lower bound for all eigenvalues. This guarantees that the global minimum over all infinitely many representations must be located within the finite set of representations with $k < k_0$. An algorithmic check of this finite set rigorously proves that the global minimum is attained at $k \in \{0,1\}$ for these metrics.

Second, for the remaining metrics in this subset, and for highly asymmetric metrics, the off-diagonal entries dominate. The Gershgorin circles overlap with the negative real axis and do not provide a useful lower bound. To address these cases, we use a Monte Carlo stress test. We compute the exact eigenvalues for a sample of 10,000 random metrics up to weight $k=10$. 


The numerical results consistently show that the minimum is always attained at $k \in \{0,1\}$.
This led the authors to formulate, during the research process, the following conjecture.
(See Section~5 for an AI-assisted proof.)

\begin{conjecture}\label{conj:first_eigenvalue}
The smallest eigenvalue of the Hodge-Laplacian on 1-forms associated to $(\SU(2),g_{\abc})$ is given by
\begin{equation*}
\lambda_1(\Delta_1) 
= \min\left( \frac{4b^2c^2}{a^2}, \frac{4a^2c^2}{b^2}, \frac{4a^2b^2}{c^2}, a^2+b^2+c^2 \right).
\end{equation*}
Analogously, for $(\SO(3),g_{\abc})$, 
\begin{equation*}
\lambda_1(\Delta_1) 
= \min\left( \frac{4b^2c^2}{a^2}, \frac{4a^2c^2}{b^2}, \frac{4a^2b^2}{c^2} \right).
\end{equation*}
\end{conjecture}
	
We present a proof of this conjecture in Section \ref{sec:proof_conjecture}.

	\section{The case of Berger spheres}\label{sec:Berger spheres}

In this section we focus in ($3$-dimensional) Berger spheres. 
Any of them is isometric to $(\SU(2),g_{\abc})$ with $b=c$. 
We assume throughout the section that $a,b$ are positive real numbers, $\Delta_1$ denotes the Hodge-Laplace operator acting on $1$-forms associated to $(\SU(2),g_{(a,b,b)})$, $G=\SU(2)$ and $\fg=\su(2)$.

From Subsection~\ref{subsec:firsteigenvalue}, taking $b=c$, it follows immediately that the spectrum of $\Delta_1^{(0)}$ and $\Delta_1^{(1)}$ are given by 
\begin{equation}
\begin{aligned}
\Spec(\Delta_1^{(0)})&
=\{4a^2, 4\tfrac{b^4}{a^2}, 4\tfrac{b^4}{a^2}\}
,\\
\Spec(\Delta_1^{(1)})&
=\{
		9a^2, 
		9a^2, 
		a^2+4b^2+4\tfrac{b^4}{a^2},
		a^2+4b^2+4\tfrac{b^4}{a^2},
		a^2+2b^2,
		a^2+2b^2
		\}.
\end{aligned}
\end{equation}

The next goal is to describe the spectrum of $\Delta_1^{(k)}$ for any $k\geq2$. 
We recall that $\{P_0,\dots,P_k\}$ is the basis of $V_{\varrho_k}$ given by $P_l(z,w)=z^lw^{k-l}$ for any $0\leq l\leq k$, $\{X_1,X_2,X_3\}$ is the orthonormal basis of $\su(2)$ with respect to $\innerdots_{\abc}$ defined in Subsection~\ref{subsec:SU(2)}, and $\{X_1^*,X_2^*,X_3^*\}$ is its dual basis. 
As in Proposition~\ref{prop:Hodge-Laplace_on_k_rep}, we consider the basis $\{P_r\otimes X_p^*: 0\leq r\leq k,\, 1\leq p\leq 3\}$ of $V_{\varrho_k}\otimes \fg_\C^*$.

We fix an integer $k\geq2$. 
For any $j=0,\dots,k$, we set 
\begin{equation}
v_{k,j}
=  j\tfrac{b}{a}\mi\, P_{j-1}\otimes (X_2+\mi X_3)
+ (k-2j)\, P_j\otimes X_1
- (k-j) \tfrac{b}{a}\mi\, P_{j+1}\otimes (X_2-\mi X_3)
.
\end{equation}
Furthermore, we set
\begin{equation}
\begin{aligned}	
w_{k,-1}&
=P_0\otimes(X_2-\mi X_3),
&\qquad 
w_{k,0}&
= 2\tfrac{b}{a}\, P_0\otimes X_1 
+ \mi k P_{1}\otimes (X_2-\mi X_3)
,
\\
w_{k,k+1}&
=P_k\otimes(X_2+\mi X_3)
,
&
w_{k,k}&
= \mi k P_{k-1}\otimes (X_2+\mi X_3)
+2\tfrac{b}{a}\, P_k\otimes X_1
,
\end{aligned}
\end{equation}
and, for any $1\leq j\leq k-1$, 
\begin{equation}
		w_{k,j}^\pm= \mi \alpha_j^\pm P_{j-1}\otimes (X_2+\mi X_3) + \beta_j^\pm P_j\otimes X_1 + \mi \gamma_j^\pm P_{j+1}\otimes (X_2-\mi X_3)
		,
\end{equation}
where
\begin{equation}
		\begin{aligned}
			\alpha_j^\pm&
			= 
			\frac{1}{4(k-j+1)}\frac{1}{b^2} 
			\left(\begin{array}{l}
				2(k-2j)^2a^2
				+\big(2k+4j(k-j)\big)b^2
				\\
				\pm (-2)(k-2j) \sqrt{a^4(k-2j)^2 +a^2b^2\big(2k+4j(k-j)\big)+b^4}
			\end{array}\right)
			,\\
			\beta_j^\pm&
			=
			\frac{-a^2(k-2j) + b^2 \pm \sqrt{a^4(k-2j)^2 +a^2b^2\big(2k+4j(k-j)\big)+b^4} }{ab}
			,\\
			\gamma_j^\pm&
			=k-j
			.
		\end{aligned}
\end{equation}

\begin{theorem}\label{thm4:Spec(Berger)}
Fix $k\geq2$. 
A basis of eigenvectors of $\Delta_1^{(k)}$ is given by 
\begin{equation}\label{eq4:basis-eigenvectors}
\{v_{k,j}: 0\leq j\leq k\} \cup
\{w_{k,-1},w_{k,0},w_{k},w_{k,k+1}\}\cup
\{w_{k,j}^\pm: 1\leq j\leq k-1\}
.
\end{equation}
The associated eigenvalues are given as follows:
\begin{align}
\Delta_1^{(k)}\cdot v_{k,j}&=\nu_{k,j}\, v_{k,j},&
\Delta_1^{(k)}\cdot w_{k,j}&=\mu_{k,j}\, w_{k,j},&
\Delta_1^{(k)}\cdot w_{k,j}^\pm&=\mu_{k,j}^\pm\, w_{k,j}^\pm,
\end{align} 
for any $j$ indicated as in \eqref{eq4:basis-eigenvectors}, where
\begin{equation}
\begin{aligned}
\nu_{k,j}&={a^2(k-2j)^2 + b^2((4j+2)k-4j^{2})},
\\
\mu_{k,-1}=\mu_{k,k+1}&=(k+2)^2a^2,
\\
\mu_{k, 0}=\mu_{k,k  }&=k^2a^2+4kb^2 +4\tfrac{b^4}{a^2},
\\
\mu_{k,j}^\pm&
=	\nu_{k,j}+2 \tfrac{b^4}{a^2} \pm 2 \tfrac{b^2}{a^2} \sqrt{a^2\cdot\nu_{k,j} +b^4}.
\end{aligned}
\end{equation}
\end{theorem}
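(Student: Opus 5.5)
Since $b=c$, the operator $\Delta_1^{(k)}$ of Proposition~\ref{prop:Hodge-Laplace_on_k_rep} simplifies a great deal. First, $E_r=F_r=0$, so the Casimir part is diagonal. Second, the Christoffel symbols become $\gamma_1=-2a+\tfrac{b^2}{a}$, $\gamma_2=a$ and $\gamma_3=a-\tfrac{b^2}{a}$, and $D_{r,1}$, $D_{r,2}=D_{r,3}$ become explicit. The key structural step is to pass to the complex basis $X_1^*$, $X_\pm^*:=X_2^*\pm\mi X_3^*$ (identified with the vectors $X_1$, $X_2\pm\mi X_3$ appearing in the statement via the metric). In this basis the operator $\sum_i d\varrho(X_i)\otimes\nabla_{X_i}$ preserves the total weight $\mathrm{wt}(P_r\otimes X_1)=k-2r$, $\mathrm{wt}(P_r\otimes X_\pm)=k-2r\pm 2$, because $X_2\pm\mi X_3$ are raising and lowering operators for $\ad(E_1)$ and $\nabla$ is $\ad(E_1)$-equivariant when $b=c$. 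Hence $\Delta_1^{(k)}$ is block diagonal with respect to the decomposition into weight spaces
\[
W_j=\Span\{P_{j-1}\otimes X_+,\;P_j\otimes X_1,\;P_{j+1}\otimes X_-\},\qquad -1\le j\le k+1,
\]
with the convention $P_l=0$ outside $0\le l\le k$. The spaces $W_j$ with $1\le j\le k-1$ are $3$-dimensional, $W_0$ and $W_k$ are $2$-dimensional, and $W_{-1}$, $W_{k+1}$ are $1$-dimensional. The dimension count is $3(k-1)+2\cdot 2+2=3(k+1)$, which matches the number of vectors in \eqref{eq4:basis-eigenvectors}.

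\textbf{Step 1: the $1$-dimensional blocks.} On $W_{-1}$ and $W_{k+1}$ the vectors $w_{k,-1}$ and $w_{k,k+1}$ are automatically eigenvectors. I read off the eigenvalue from the diagonal entry $D_{0,2}$ together with the contribution of $A_0$ acting on $X_2-\mi X_3$. This should give $a^2k^2+2b^2k+4a^2\pm 2ak\gamma_1$, and for the correct sign it should collapse to $(k+2)^2a^2$.

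\textbf{Step 2: the $3\times 3$ blocks.} I write the matrix of $\Delta_1^{(k)}$ on $W_j$ using the entries $A_j$, $B_{j,\pm}$, $C_{j,\pm}$ from Proposition~\ref{prop:Hodge-Laplace_on_k_rep}. The vector $v_{k,j}$ should be the exact form $d f$ with $f=P_j$: since $d$ commutes with $\Delta$, its eigenvalue is the function eigenvalue $-C_{\varrho}$ on $P_j$, which with $b=c$ equals $a^2(k-2j)^2+b^2(4jk+2k-4j^2)=\nu_{k,j}$. I can verify directly that $v_{k,j}$ is proportional to $\sum_i X_i(P_j)\,X_i^*$, using \eqref{eq:rep_action}, which avoids any matrix computation for this eigenvector. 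The remaining $2$-dimensional invariant complement (the coclosed part, orthogonal to $v_{k,j}$) then carries a $2\times2$ symmetric problem. Its trace and determinant give the characteristic polynomial
\[
\mu^2-2\bigl(\nu_{k,j}+2\tfrac{b^4}{a^2}\bigr)\mu+\ldots=0,
\]
and solving it yields $\mu_{k,j}^\pm$. Finally I check that $w_{k,j}^\pm$, with the stated $\alpha_j^\pm,\beta_j^\pm,\gamma_j^\pm$, satisfy the eigen-equations. The normalization $\gamma_j^\pm=k-j$ fixes the scaling, one row gives $\beta_j^\pm$, and another gives $\alpha_j^\pm$.

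\textbf{Step 3: the $2\times 2$ blocks.} For $j=0$ and $j=k$, the block $W_0$ contains the exact form $v_{k,0}$, with eigenvalue $\nu_{k,0}=k^2a^2+2kb^2$, and one further vector. In this case $w_{k,0}$ is the vector orthogonal to $v_{k,0}$ within $W_0$, and its eigenvalue follows from the trace: $\operatorname{tr}-\nu_{k,0}=k^2a^2+4kb^2+4b^4/a^2$. The block $W_k$ follows by the symmetry $r\mapsto k-r$.

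\textbf{Completeness and main obstacle.} Linear independence is automatic: the vectors live in distinct weight spaces, and within each block they are eigenvectors that are either orthogonal or belong to distinct eigenvalues. When eigenvalues coincide, orthogonality to the exact form still gives independence. I expect the main obstacle to be Step 2. The delicate points are getting the $3\times3$ block entries right in the complex basis (signs of $\mi$ and the dual versus vector identification), and simplifying the discriminant to $4\tfrac{b^4}{a^4}(a^2\nu_{k,j}+b^4)$. I would do this symbolically, as in \cite{HL2025Code}, after first checking the result against the $k=1$ spectrum.
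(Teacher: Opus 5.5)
Your route is sound and genuinely different from the paper's.

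\textbf{How the two routes differ.} The paper argues by direct substitution. It writes out the $b=c$ matrix \eqref{eq4:matrix-b=c}, applies it to each proposed vector, and checks identities such as $A_j^\pm=\alpha_j^\pm\mu_{k,j}^\pm$. It gives no reason for the three-term shape of the eigenvectors, and it leaves the basis claim implicit.

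You instead use the extra circle symmetry of the Berger metric to split $V_{\varrho_k}\otimes\fg_\C^*$ into invariant blocks $W_j$ of dimension at most $3$; their invariance is readily confirmed from \eqref{eq4:matrix-b=c}. You obtain $v_{k,j}$ and $\nu_{k,j}$ without computation as $df$ via $\Delta_1 d=d\Delta_0$, an interpretation the paper only offers afterwards, in Remark~\ref{rem: interpretation Berger}. The remaining eigenvalues then come from trace and determinant.

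This yields more than the paper states. One gets $\mu_{k,j}^+\mu_{k,j}^-=\nu_{k,j}^2$, hence $\mu_{k,j}^-<\nu_{k,j}<\mu_{k,j}^+$, and also $\mu_{k,0}-\nu_{k,0}=2kb^2+4\tfrac{b^4}{a^2}>0$. So every block has simple spectrum, and the basis property is immediate. Checking that the specific vectors $w_{k,j}^\pm$ are eigenvectors is still essentially the paper's computation.

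\textbf{Three details need correcting.}

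(i) For $b=c$ one has $\gamma_2=\tfrac{b^2}{a}$ and $\gamma_3=-\tfrac{b^2}{a}$, not $a$ and $a-\tfrac{b^2}{a}$. The relation $\gamma_2=-\gamma_3$ is exactly what makes the $P_{j\pm1}$-components of $\Delta_1^{(k)}(P_j\otimes X_1)$ multiples of $X_2\mp\mi X_3$. With your values the $W_j$ would not be invariant.

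(ii) The sign in your weight formula is off. With $\mathrm{wt}(P_r\otimes X_\pm)=k-2r\pm2$, the generators of $W_j$ would have weights $k-2j+4$, $k-2j$, $k-2j-4$. Since $[E_1,E_2\pm\mi E_3]=\mp2\mi(E_2\pm\mi E_3)$, it should read $k-2r\mp2$. The spaces $W_j$ themselves are the right ones.

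(iii) The matrix of $\Delta_1^{(k)}$ in the basis $P_r\otimes X_p^*$ is not Hermitian for the coordinate inner product. The $((r,1),(r+1,3))$ entry is $2(r+1)\tfrac{b^3}{a}$, while the $((r+1,3),(r,1))$ entry is $2(k-r)\tfrac{b^3}{a}$. The matrix is self-adjoint for the $\SU(2)$-invariant inner product, in which $\|P_r\|^2\propto r!(k-r)!$. So ``orthogonal to $v_{k,j}$'' in Steps 2 and 3 must be taken in that product. In coordinates, the Hermitian product of $w_{k,0}$ and $v_{k,0}$ is $2\tfrac{b}{a}k(1-k)\neq0$. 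Alternatively, avoid orthogonality by working on the quotient $W_j/\C v_{k,j}$, whose characteristic polynomial is that of the block divided by $\mu-\nu_{k,j}$.
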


\begin{proof}
Let $(r,p)$ and $(s,q)$ be indices of the basis of $V_{\varrho_k}\otimes\fg_\C^*$, so $0\leq r,s\leq k$ and $1\leq p,q\leq 3$. 
Taking into account the assumption $b=c$, 
Proposition~\ref{prop:Hodge-Laplace_on_k_rep} ensures that
\begin{align*}
(\Delta_1^{(k)})_{(r,p),(s,q)}
		=
		-(C_{\varrho_k})_{(r,s)} \delta_{(p,q)} 
		+ 4\delta_{(r,s)}D_{p,q}
		- 2\sum_{i=1}^3 (d{\varrho_k}(X_i))_{(r,s)} (\nabla_{X_i}^{\fg})_{(p,q)}
		,
\end{align*}
where 
\begin{equation}\label{eq4:coeff-b=c}
\begin{array}{r@{\;}l@{\qquad}r@{\;}l}
D&=
\diag(\frac{b^4}{a^2}, a^2, a^2),
&
-(C_{\varrho_k})_{r,r} =\nu_{k,r}&=
{a^2(k-2r)^2 + b^2((4r+2)k-4r^{2})},
\\
(d\varrho_k(X_1))_{r,r} &= a \mi (k - 2r).
\\
(d\varrho_k(X_2))_{r,r-1} &= b(k-r+1), &
(d\varrho_k(X_2))_{r,r+1} &= -b(r+1).
\\
(d\varrho_k(X_3))_{r,r-1} &= -b\mi (k-r+1), &
(d\varrho_k(X_3))_{r,r+1} &=- b\mi (r+1).
\\
(\nabla_{X_1}^{\fg})_{3,2} &
=(2a-\tfrac{b^2}{a}),
&
(\nabla_{X_1}^{\fg})_{2,3} &
=-(2a-\tfrac{b^2}{a}),
\\
(\nabla_{X_2}^{\fg})_{3,1} &
=-\tfrac{b^2}{a}
&
(\nabla_{X_2}^{\fg})_{1,3} &
=\tfrac{b^2}{a}, 
\\ 
(\nabla_{X_3}^{\fg})_{2,1} &
=\tfrac{b^2}{a} ,
&
(\nabla_{X_3}^{\fg})_{1,2} &
=-\tfrac{b^2}{a},
\end{array}
\end{equation}
and all other entries of these component matrices are zero. 
Consequently, 
\begin{equation}\label{eq4:matrix-b=c}
\begin{aligned}
(\Delta_1^{(k)})_{(r,p),(s,q)} &
			=
			\big( \nu_{k,r} + 4D_{p,p} \big) \cdot \delta_{r,s} \delta_{p,q} 
			-2\mi (k-2r) (2a^2-b^2) \cdot \delta_{r,s}
			\big(\delta_{p,3}\delta_{q,2}-\delta_{p,2}\delta_{q,3}\big)
			\\ & \quad 
			+ \tfrac{b^3}{a}\big(
			-2 (k-r+1) \delta_{s,r-1}  
			+2 (r+1)   \delta_{s,r+1} 
			\big) \cdot \big(\delta_{p,1}\delta_{q,3}-\delta_{p,3}\delta_{q,1}\big)
			\\ & \quad 
			+ \tfrac{b^3}{a} \big(
			+2\mi (k-r+1) \delta_{s,r-1} 
			+2\mi (r+1)   \delta_{s,r+1} 
			\big)  \cdot \big(\delta_{p,2}\delta_{q,1}-\delta_{p,1}\delta_{q,2}\big)
			.
\end{aligned}
\end{equation}

We are now ready to prove the assertions claimed. 
For any $0\leq j\leq k$, we begin computing  
\begin{equation*}
\begin{aligned}
(\Delta_1^{(k)} \cdot v_{k,j})_{(r,p)}&
= \sum_{(s,q)} (\Delta_1^{(k)})_{(r,p),(s,q)} (v_{k,j})_{(s,q)}
\\ & 
= \mi j\tfrac{b}{a} (\Delta_1^{(k)})_{(r,p),(j-1,2)} 
-  j\tfrac{b}{a} (\Delta_1^{(k)})_{(r,p),(j-1,3)} 
+ (k-2j) (\Delta_1^{(k)})_{(r,p),(j,1)}
\\ &\quad 
- \mi (k-j)\tfrac{b}{a} (\Delta_1^{(k)})_{(r,p),(j+1,2)}
- (k-j)\tfrac{b}{a} (\Delta_1^{(k)})_{(r,p),(j+1,3)}
.
\end{aligned}
\end{equation*} 
Now, \eqref{eq4:matrix-b=c} yields
\begin{equation*}
\begin{aligned}
(\Delta_1^{(k)} \cdot v_{k,j})_{(r,p)}&
				= \mi j\tfrac{b}{a} 
				\left(
				-(C_\varrho)_{(r,j-1)}\cdot \delta_{(p,2)} 
				+ 4\delta_{(r,j-1)}\cdot D_{p,2}
				- \sum_{i=1}^3 2  (d\varrho(X_i))_{(r,j-1)}\cdot (\nabla_{X_i}^{\fg})_{(p,2)}
				\right)
				\\ & \quad 
				-  j\tfrac{b}{a} 
				\left(
				-(C_\varrho)_{(r,j-1)}\cdot \delta_{(p,3)} 
				+ 4\delta_{(r,j-1)}\cdot D_{p,3}
				- \sum_{i=1}^3 2  (d\varrho(X_i))_{(r,j-1)}\cdot (\nabla_{X_i}^{\fg})_{(p,3)}
				\right)
				\\ & \quad 
				+(k-2j)\left(
				-(C_\varrho)_{(r,j)}\cdot \delta_{(p,1)} 
				+ 4\delta_{(r,j)}\cdot D_{p,1}
				- \sum_{i=1}^3 2  (d\varrho(X_i))_{(r,j)}\cdot (\nabla_{X_i}^{\fg})_{(p,1)}
				\right)
				\\ & \quad 
				- \mi (k-j)\tfrac{b}{a} 
				\left(
				-(C_\varrho)_{(r,j+1)}\cdot \delta_{(p,2)} 
				+ 4\delta_{(r,j+1)}\cdot D_{p,2}
				- \sum_{i=1}^3 2  (d\varrho(X_i))_{(r,j+1)}\cdot (\nabla_{X_i}^{\fg})_{(p,2)}
				\right)
				\\ & \quad 
				-(k-j)\tfrac{b}{a} 
				\left(
				-(C_\varrho)_{(r,j+1)}\cdot \delta_{(p,3)} 
				+ 4\delta_{(r,j+1)}\cdot D_{p,3}
				- \sum_{i=1}^3 2  (d\varrho(X_i))_{(r,j+1)}\cdot (\nabla_{X_i}^{\fg})_{(p,3)}
				\right)
.
\end{aligned} 
\end{equation*}
After substituting \eqref{eq4:coeff-b=c} above, tedious but straightforward calculations give
\begin{equation*}
\begin{aligned}
(\Delta_1^{(k)} \cdot v_{k,j})_{(r,p)}&
				= 
				\delta_{r,j-1}\delta_{p,2}\, \mi j\tfrac{b}{a} \nu_{k,j}
				+\delta_{r,j-1}\delta_{p,3}\, j\tfrac{b}{a} (-1)\nu_{k,j}
				+\delta_{r,j}\delta_{p,1} \,(k-2j) \nu_{k,j} 
				\\ &\quad
				+\delta_{r,j+1}\delta_{p,2}\, \mi(k-j)\tfrac{b}{a}(-1)\nu_{k,j}
				+\delta_{r,j+1}\delta_{p,3}\, (k-j)\tfrac{b}{a}(-1)\nu_{k,j} 
.
\end{aligned} 
\end{equation*}
We thus conclude that 
\begin{equation*}
\begin{aligned}
\Delta_1^{(k)}\cdot v_{k,j} &
				=\sum_{(r,p)} (\Delta_1^{(k)}\cdot v_{k,j})_{(r,p)} \; P_r\otimes X_p
				= \nu_{k,j}\,v_{k,j}. 
			\end{aligned}
		\end{equation*}
The proofs of $\Delta_1^{(k)}\cdot w_{k,j}=\mu_{k,j}\, w_{k,j}$ for $j=-1,0,k,k+1$ are very similar, so they are left to the reader. 

We now move to prove that $w_{k,j}^\pm$ is an eigenfunctions for $\Delta_1^{(k)}$ for $1\leq j\leq k-1$. 
Proceeding as above, one gets
\begin{equation}
(\Delta_1^{(k)}\cdot u_j)_{(r,p)} 
=
\mi A_j^\pm \, (\delta_{r,j-1} \delta_{p,2} 
+\mi \delta_{r,j-1} \delta_{p,3} )
+\delta_{r,j} \delta_{p,1} B_j^\pm
+\mi C_j^\pm\,  (\delta_{r,j+1} \delta_{p,2} 
-\mi \delta_{r,j+1} \delta_{p,3})
,
\end{equation}
where
\begin{equation}
\begin{aligned}
A_j^\pm&=\alpha_j^\pm \big(
(k-2j)^2 a^2
+ 4j(k-j+1)b^2 
\big)
+ 2j \beta_j^\pm\tfrac{b^3}{a}
, \\ 
B_j^\pm&=
+4 \tfrac{b^3}{a} \big(
\alpha_j^\pm (k-j+1) 
+\gamma_j^\pm (j+1)  
\big)
+\beta_j^\pm \big( \nu_{k,j} + 4\tfrac{b^4}{a^2} \big)
, \\
C_j^\pm&=\gamma_j^\pm\big(
(k-2j)^2 a^2
+ 4(j+1)(k-j)b^2 
\big)
+ 2(k-j) \beta_j^\pm \tfrac{b^3}{a}
.
\end{aligned}
\end{equation}
Hence
\begin{equation}
\begin{aligned}
\Delta_1^{(k)}\cdot u_j &
= \sum_{(r,p)} (\Delta\cdot u_j)_{(r,p)} P_r\otimes X_p
\\ & 
=A_j^\pm\mi P_{j-1}\otimes (X_2+\mi X_3)
+B_j^\pm\mi P_{j}\otimes X_1
+C_j^\pm\mi P_{j+1}\otimes (X_2-\mi X_3)
.
\end{aligned}
\end{equation}
It is a simple matter to check that 
$A_j^\pm=\alpha_j^\pm\mu_{k,j}^\pm$,
$B_j^\pm=\beta_j^\pm\mu_{k,j}^\pm $,
$C_j^\pm=\gamma_j^\pm\mu_{k,j}^\pm$,
and the identity $\Delta_1^{(k)}\cdot w_{k,j}^\pm = \mu_{k,j}^\pm \, w_{k,j}^\pm$ follows. 
\end{proof}
We are now in position to state the full spectrum of $\Delta_1$ associated to $(\SU(2),g_{(a,b,b)})$ and $(\SO(3), g_{(a,b,b)})$ for any $a,b>0$. 

\begin{corollary}\label{cor:full spectrum Berger}
The spectrum of the Hodge-Laplace operator $\Delta_1$ acting on $1$-forms associated to the Berger sphere $(\SU(2),g_{(a,b,b)})$ is given by the union over all $k\geq 0$ of the following $\Delta_{1}^{(k)}$-spectra:
\begin{equation}
\begin{aligned}
\Spec(\Delta_1^{(0)})&
=\{4a^2, 4a^2, 4\tfrac{b^4}{a^2}\}
\\
\Spec(\Delta_{1}^{(k)})&
=
	\Big\{
	\underbrace{\nu_{k,j},\dots,\nu_{k,j}}_{\text{$(k+1)$-times}}
	:0\leq j\leq k
	\Big\}
	\cup \;
	\Big\{
		\underbrace{\mu_{k,j},\dots,\mu_{k,j}}_{\text{$(k+1)$-times}}
	:j=-1,0,k,k+1
	\Big\}
\\ & \quad
	\cup \;
	\Big\{
	\underbrace{\mu_{k,j}^+,\mu_{k,j}^- ,\dots, \mu_{k,j}^+,\mu_{k,j}^-}_{\text{$(k+1)$-times}}
	:1\leq j\leq k-1
	\Big\}
\qquad\text{for any $k\geq0$.}
\end{aligned}\end{equation}

For the projective space $(\SO(3), g_{(a,b,b)})$, the spectrum is the subset consisting only of those eigenvalues corresponding to even weights $k \in 2\mathbb{N}_0$, that is, 
\begin{equation}
	\bigcup_{k \in 2\mathbb{N}_0} \Spec(\Delta_{1}^{(k)}).
\end{equation}
\end{corollary}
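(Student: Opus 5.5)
The corollary follows by assembling three ingredients already in place: the general decomposition of Proposition~\ref{prop: spectrum general}, the low-weight spectra computed in Subsection~\ref{subsec:firsteigenvalue}, and the eigenbasis of Theorem~\ref{thm4:Spec(Berger)}.

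First, I apply Proposition~\ref{prop: spectrum general} with $G=\SU(2)$ and $K=\{\Id\}$. Since $\Hom_K(V_{\varrho_k},\fg_\C^*)=\Hom(V_{\varrho_k},\fg_\C^*)$ is nonzero for every $k\geq 0$, every irreducible $\varrho_k$ contributes. Each eigenvalue of $\Delta_1^{(k)}$ then appears in $\Spec(\Delta_1)$ with multiplicity multiplied by $\dim V_{\varrho_k}=k+1$; this accounts for the ``$(k+1)$-times'' in the statement.

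Second, I determine $\Spec(\Delta_1^{(k)})$ weight by weight.
\begin{itemize}
\item For $k=0$, I read the diagonal entries of \eqref{eq:spec_k0} at $c=b$: $4b^2c^2/a^2\mapsto 4b^4/a^2$, and both $4a^2c^2/b^2$ and $4a^2b^2/c^2\mapsto 4a^2$.
\item For $k=1$, I take the list displayed at the start of Section~\ref{sec:Berger spheres}. I then check that it matches the general formulas at $k=1$ (where the range $1\le j\le k-1$ is empty). Indeed $\nu_{1,0}=\nu_{1,1}=a^2+2b^2$, $\mu_{1,\pm}=9a^2$, and $\mu_{1,0}=\mu_{1,1}=a^2+4b^2+4b^4/a^2$, in agreement with the list.
\item For $k\geq2$, Theorem~\ref{thm4:Spec(Berger)} gives the eigenvalues directly.
\end{itemize}

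The one point needing care is that the family \eqref{eq4:basis-eigenvectors} really is a basis. The count is right: $(k+1)+4+2(k-1)=3(k+1)$. For independence, I group the vectors by the $\mathfrak{u}(1)$-weight generated by $X_1$. The vectors $v_{k,j}$ and $w^\pm_{k,j}$ all live in the three-dimensional span
\[
W_j=\Span\{P_{j-1}\otimes(X_2+\mi X_3),\;P_j\otimes X_1,\;P_{j+1}\otimes(X_2-\mi X_3)\},
\]
and these spaces $W_j$ are linearly independent for distinct $j$. Within each $W_j$, the three vectors $v_{k,j},w^+_{k,j},w^-_{k,j}$ are independent, because $\mu^+\neq\mu^-$ and both differ from $\nu_{k,j}$ whenever $b\neq0$. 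The boundary vectors $w_{k,-1},w_{k,0},w_{k,k},w_{k,k+1}$ fill the truncated spaces at the ends of the range. This independence check is the only nontrivial step, and it amounts to a short linear-algebra argument.

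Finally, for $\SO(3)=\SU(2)/\{\pm\Id\}$, I use Proposition~\ref{prop: spectrum general} with $G=\SO(3)$. The irreducible representations of $\SO(3)$ are exactly the $\varrho_k$ on which $-\Id$ acts trivially, and since $\varrho_k(-\Id)=(-1)^k$, these are precisely the $\varrho_k$ with $k$ even. The left-invariant metric and the local operators $\Delta_1^{(k)}$ are unchanged, so the spectrum is the union over $k\in2\mathbb{N}_0$.
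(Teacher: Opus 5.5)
Your proposal is correct and follows the route the paper leaves implicit: Proposition~\ref{prop: spectrum general} with $K=\{\Id\}$ gives the factor $k+1$, the weights $k=0,1$ come from the explicit computations of Subsection~\ref{subsec:firsteigenvalue}, Theorem~\ref{thm4:Spec(Berger)} covers $k\geq2$, and $\varrho_k(-\Id)=(-1)^k$ selects the even weights for $\SO(3)$. Your weight-space independence argument makes explicit what the paper only asserts in the statement of Theorem~\ref{thm4:Spec(Berger)}; note that $\mu^-_{k,j}\neq\nu_{k,j}$ needs $\nu_{k,j}>0$, not merely $b\neq0$, and this holds because $\nu_{k,j}\ge b^2\bigl(4j(k-j)+2k\bigr)>0$. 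Reading $k=0$ directly off \eqref{eq:spec_k0} correctly gives $\{4a^2,4a^2,4b^4/a^2\}$, which avoids the misprinted list $\{4a^2,4b^4/a^2,4b^4/a^2\}$ at the start of Section~\ref{sec:Berger spheres}.
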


\begin{remark}\label{rem: interpretation Berger}
The spectrum of the Laplace-Beltrami operator $\Delta_0$ associated to the Berger $3$-sphere $(\SU(2),g_{(a,b,b)})$ was determined by Tanno~\cite[Lem.~4.1]{Tanno79} (see also \cite[Prop.~3.9]{Lauret-SpecSU(2)}):
\begin{equation}
\bigcup_{k\geq0}
	\Big\{
	\underbrace{\nu_{k,j},\dots,\nu_{k,j}}_{\text{$(k+1)$-times}}
	:0\leq j\leq k
	\Big\}
.
\end{equation}

Therefore, $\Spec(\Delta_0)\smallsetminus\{0\} \subset \Spec(\Delta_1)$, which is consistent with the Hodge decomposition of $1$-forms among harmonics, closed and coclosed  $1$-forms. 
Indeed, if $f$ is a non-constant eigenfunction of $\Delta_0$, then $df$ is a closed eigen-$1$-form of $\Delta_1$ associated to the same eigenvalue.\\
The eigenvalues $\mu_{k,-1}=\mu_{k,k+1} = (k+2)^2 a^2$ correspond to higher eigenvalues of the Laplacian on the fiber $S^1$.
Furthermore, the mixed eigenvalues exhibit a  geometric structure related to the geometry of the non-trivial bundle. Let $\omega = E_{1}^{*} \otimes E_1 \in \Omega^1(S^3, \mathfrak{su}(2))$ be the connection $1$-form of the Hopf fibration and $\Omega = d\omega$ its curvature form. We define the curvature parameter $\kappa$ via the squared norm of $\Omega$:
\[ \kappa := \frac{1}{4} \|\Omega\|^2 = \frac{1}{4} \|-2b^2 X_2^* \wedge X_3^* \otimes E_1\|^2 = \frac{b^4}{a^2}. \]
This coincides with the squared norm of the O'Neill tensor $\|A\|^2$. In terms of $\kappa$, the mixed eigenvalues take the compact form
\begin{equation*}
	\begin{aligned}
		\mu_{k,0}=\mu_{k,k} &= \big(ka+2\sqrt{\kappa}\big)^2, \\
		\mu_{k,j}^{\pm} &= \big(\sqrt{\nu_{k,j}+\kappa}\ \pm\ \sqrt{\kappa}\big)^2.
	\end{aligned}
\end{equation*}
\end{remark}
From the explicit description of the full spectrum in Corollary \ref{cor:full spectrum Berger}, we can directly determine the smallest non-zero eigenvalue. This provides an independent verification of the First Eigenvalue formula (Theorem \ref{thm:first eigenvalue}) for this specific sub-family of metrics.
	\begin{corollary}
The smallest eigenvalue of the Hodge-Laplacian on 1-forms for the Berger sphere $(\SU(2), g_{(a,b,b)})$ is given by
		\begin{equation}
\min \left\{\nu_{1,0}, \mu_{0,0}, \mu_{0,\pm 1} \right\}
= \min \left\{a^2+2b^2, \frac{4b^4}{a^2}, 4a^2 \right\}
.
\end{equation}
For the projective space $(\SO(3), g_{(a,b,b)})$, the smallest eigenvalue is given by
\begin{equation}
\min\left\{ \frac{4b^4}{a^2}, 4a^2 \right\}.
\end{equation}
		Note that the eigenvalues $\nu_{1,j}$ appear with total multiplicity $2(1+1)=4$, while $\mu_{0,0}$ has multiplicity $1$ and the pair $\mu_{0,\pm 1}$ has total multiplicity $2$.
	\end{corollary}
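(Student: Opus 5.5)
Since Corollary~\ref{cor:full spectrum Berger} lists the whole spectrum of $\Delta_1$ on $(\SU(2),g_{(a,b,b)})$, the plan is to bound each family of eigenvalues from below by one of the three claimed candidates and check that these candidates really occur. Occurrence is clear. $4b^4/a^2$ and $4a^2$ come from $\Spec(\Delta_1^{(0)})$; here I read $\mu_{0,0}$ and $\mu_{0,\pm1}$ as these $k=0$ values. $a^2+2b^2=\nu_{1,0}=\nu_{1,1}$ comes from $\Spec(\Delta_1^{(1)})$ computed in Section~\ref{sec:Berger spheres}. The remaining $k=1$ eigenvalues are $9a^2\ge 4a^2$ and $a^2+4b^2+4b^4/a^2\ge 4b^4/a^2$, so they do not lower the minimum.

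\textbf{The families $\nu$ and $\mu_{k,j}$ for $k\ge2$.} By Remark~\ref{rem: interpretation Berger} the $\nu_{k,j}$ are the nonzero Laplace--Beltrami eigenvalues. From the formula, $\nu_{k,j}\ge \nu_{k,0}=k^2a^2+2kb^2$ up to the symmetry $j\leftrightarrow k-j$. Indeed $\nu_{k,j}-\nu_{k,0}=4j(k-j)(2b^2-a^2)+\dots$; to avoid sign issues I would instead bound directly: $\nu_{k,j}\ge b^2(4j+2)k-4j^2b^2\ge 2kb^2$ and also $\nu_{k,j}\ge 2kb^2+a^2(k-2j)^2$. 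For $k\ge2$ I want $\nu_{k,j}\ge\min(a^2+2b^2,4a^2,4b^4/a^2)$. If $j\in\{0,k\}$ then $\nu=k^2a^2+2kb^2\ge a^2+2b^2$. Otherwise $1\le j\le k-1$ gives $\nu\ge b^2(2k+4j(k-j))\ge 6b^2\cdot\ldots$. Then I would use AM--GM: $a^2+(\text{const})\,b^2\ge 2\sqrt{\cdot}\,ab$, and either $b^2\ge a^2$, so $\nu\ge a^2+2b^2$ after checking, or $b^2<a^2$, where $4b^4/a^2<4b^2\le\nu$. The other pieces are easy: $\mu_{k,\pm}=(k+2)^2a^2\ge16a^2$, and $\mu_{k,0}=(ka+2\sqrt\kappa)^2\ge 4\kappa=4b^4/a^2$.

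\textbf{The main obstacle is the family $\mu^-_{k,j}=(\sqrt{\nu_{k,j}+\kappa}-\sqrt\kappa)^2$.} It decreases as $\kappa=b^4/a^2$ grows and tends to $\nu/4$-like values, so neither $4\kappa$ nor $\nu$ bounds it trivially. My plan is to write $x=\nu_{k,j}$ and use $\sqrt{x+\kappa}-\sqrt\kappa=x/(\sqrt{x+\kappa}+\sqrt\kappa)$, then split into two cases.

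\emph{Case $\kappa\le x$.} Then $\sqrt{x+\kappa}+\sqrt\kappa\le(\sqrt2+1)\sqrt x$, so $\mu^-\ge x/(\sqrt2+1)^2$. Since $1\le j\le k-1$ and $k\ge2$, I expect $x\ge a^2(k-2j)^2+b^2(2k+4j(k-j))\ge 8b^2$, and comparing with $a^2+2b^2$ or $4b^4/a^2$ should close this case.

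\emph{Case $\kappa>x$.} Here $\mu^-\ge x^2/(4(x+\kappa))\cdot\ldots$. I would instead compare $\mu^-$ against $4\kappa$ directly, or else against $a^2+2b^2$, by clearing square roots: the condition $\mu^-\ge m$ is equivalent to $x+2\kappa-m\ge 2\sqrt{\kappa(x+\kappa)}$, i.e.\ $(x-m)^2\ge 4\kappa m$ when $x\ge m$. With $m=a^2+2b^2$ this becomes a polynomial inequality in $a^2,b^2,k,j$, to be verified using $x\ge 8b^2+a^2(k-2j)^2$ and $\kappa=b^4/a^2$. This polynomial check is the step most likely to need care, since the extremal case is $k=2$, $j=1$, where $x=8b^2$ and $a$ is free.

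\textbf{$\SO(3)$.} Only even $k$ occur. For $k=0$ the minimum is $\min(4a^2,4b^4/a^2)$, and the bounds above, with $k\ge2$, show that no even-$k$ eigenvalue is smaller.
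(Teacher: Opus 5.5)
The gap is in the family $\mu^-_{k,j}$, which you correctly single out as the main obstacle; neither of your two cases closes as planned. The rest is fine: reducing to the explicit list of Corollary~\ref{cor:full spectrum Berger} is the route the paper has in mind, and your handling of occurrence, the $k=1$ values, $\nu_{k,j}$, $\mu_{k,-1}=\mu_{k,k+1}$ and $\mu_{k,0}$ works. You do not mention $\mu^+_{k,j}$, but it is trivial since $\mu^+_{k,j}\ge\nu_{k,j}$.

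\emph{Case $\kappa\le x$.} The estimate $\mu^-\ge x/(3+2\sqrt2)$ loses too much. For $k=2$, $j=1$ you have $x=\nu_{2,1}=8b^2$ for every $a$, so you only get $\mu^-\ge 1.37\,b^2$. At $a=b$ the target is $\min\{3b^2,4b^2,4b^2\}=3b^2$. The case $k=3$, $j=1$, $a=b$ fails the same way: $x=15b^2$ gives only about $2.57\,b^2$. The true value at $a=b$ is $\mu^-_{2,1}=4b^2$, which for $\SO(3)$ is exactly the minimum, so no crude estimate can work there.

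\emph{Case $\kappa>x$.} Here you compare against the wrong candidates. Since $\mu^-=x^2/(\sqrt{x+\kappa}+\sqrt\kappa)^2<x<\kappa$, the inequality $\mu^-\ge4\kappa$ is false. The inequality $\mu^-\ge a^2+2b^2$ is false too: $k=2$, $j=1$, $a=\tfrac14$, $b=1$ gives $\kappa=16>x=8$ and $\mu^-_{2,1}=(\sqrt{24}-4)^2\approx0.81<2.06$. So the polynomial inequality $(x-m)^2\ge4\kappa m$ with $m=a^2+2b^2$ that you intend to verify fails there. In this regime $b^2>8a^2$, so the relevant candidate is $4a^2$, and your plan never uses it for $\mu^-$.

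\emph{The fix.} Apply your own square-root-clearing criterion to the full minimum. Since $\sqrt{x+\kappa}-\sqrt\kappa\ge0$, we have $\mu^-_{k,j}\ge m$ if and only if $\nu_{k,j}\ge m+2\sqrt{\kappa m}$. Take $m^*=\min\{a^2+2b^2,4a^2,4b^4/a^2\}$.
\begin{itemize}
\item From $m^*\le4a^2$ you get $2\sqrt{\kappa m^*}\le2\sqrt{(b^4/a^2)\cdot4a^2}=4b^2$.
\item By the geometric mean, $m^*\le\min\{4a^2,4b^4/a^2\}\le4b^2$.
\item Hence $m^*+2\sqrt{\kappa m^*}\le8b^2\le b^2\big(2k+4j(k-j)\big)\le\nu_{k,j}$ for $k\ge2$, $1\le j\le k-1$, with no case split.
\end{itemize}
The same computation with $m=\min\{4a^2,4b^4/a^2\}$ settles $\SO(3)$.

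\emph{The $\SO(3)$ part.} Your remark that ``the bounds above'' carry over is not right as written. A bound $\nu_{k,j}\ge a^2+2b^2$, as used in your subcase $b\ge a$, is not a bound by $\min\{4a^2,4b^4/a^2\}$; take $a=b$. Use instead $\nu_{k,j}\ge8b^2\ge4b^2$ for $1\le j\le k-1$, and $\nu_{k,0}=k^2a^2+2kb^2\ge4a^2$ for $k\ge2$.

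\emph{Minor points.} There is a slip in your difference: $\nu_{k,j}-\nu_{k,0}=4j(k-j)(b^2-a^2)$, not a multiple of $2b^2-a^2$. Also, the whole statement is the case $c=b$ of Theorem~\ref{thm:first eigenvalue}. As a direct check from the full spectrum, the bound on $\mu^-_{k,j}$ above is the only nontrivial step.
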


\section{Proof of the First Eigenvalue Conjecture}
\label{sec:proof_conjecture}

In this section, we provide a rigorous proof of the First Eigenvalue Conjecture (Conjecture \ref{conj:first_eigenvalue}). As outlined in the introduction, the core analytical strategy for this proof---transforming the Curl operator to the round metric to bound the coexact spectrum---was autonomously discovered by ChatGPT 5.4 Pro. It took the model 100 minutes of non-stop computational reasoning to derive this mathematical result. We note that this conclusion is based on a computationally intensive, highly restricted tier (USD 200/month, limited to 15 queries), which exceeds the capabilities of standard, freely available AI tools by far. 
In what follows, we detail the complete mathematical argument, rewritten and carefully embedded in the relevant literature.

\subsection{The curl operator and the Hodge-Laplacian}

On a Riemannian manifold $(M, g)$ of dimension $n$, the composition $*_g d$ defines an endomorphism on $p$-forms if and only if $n = 2p + 1$. In our specific setting of $3$-manifolds ($n=3$), this holds for $1$-forms ($p=1$). We thus define the Curl operator on $\Omega^1(M)$ as $\operatorname{Curl}_g := *_g d$. The connection between the spectral properties of $\operatorname{Curl}_g$ and the Hodge Laplacian $\Delta_1$ is established in the literature.

\begin{proposition}[\cite{Baer-curl, CapoferriVassiliev-BeyondHodge}] \label{prop:curl_properties}
	Let $(M, g)$ be a closed, oriented Riemannian $3$-manifold.
	\begin{enumerate}
		\item The operator $\operatorname{Curl}_g = *_g d$ acting on the space of coexact $1$-forms $\delta_g \Omega^2(M)$ is a self-adjoint operator with discrete spectrum accumulating at $\pm \infty$, and $0$ is not an eigenvalue \cite[Thm.~2.1]{CapoferriVassiliev-BeyondHodge}.
		
		\item On the space of coexact $1$-forms, the Hodge Laplacian satisfies $\Delta_1 = \operatorname{Curl}_g^2$ \cite[Lem.~2.3]{Baer-curl}.
		
		\item Consequently, a coexact $1$-form $\alpha$ is an eigenform of $\Delta_1$ with eigenvalue $\lambda$ if and only if $\alpha$ is a linear combination of eigenforms of $\operatorname{Curl}_g$ whose eigenvalues $\mu$ satisfy $\mu^2 = \lambda$ \cite[Lem.~2.3]{Baer-curl}.
	\end{enumerate}
\end{proposition}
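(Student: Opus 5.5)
The plan is to reduce everything to two facts about the Hodge star in dimension three: $*_g^2=\Id$ on all forms, and, for a $2$-form $\beta$, $\delta_g\beta=*_g d *_g\beta$. For the second fact I would apply the general sign formula $\delta=(-1)^{n(p+1)+1}*d*$ on $p$-forms with $n=3$, $p=2$, which gives the sign $+1$. Combining the two for a $1$-form $\alpha$ gives
\[
\operatorname{Curl}_g\alpha = *_g d\alpha = *_g d *_g(*_g\alpha) = \delta_g(*_g\alpha).
\]
Hence $\operatorname{Curl}_g$ maps $\Omega^1(M)$ into the coexact forms $\delta_g\Omega^2(M)$, and in particular preserves that space. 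This identity drives all three items.

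\textbf{Item (2).} For coexact $\alpha$ we have $\delta_g\alpha=0$, so $\Delta_1\alpha=\delta_g d\alpha$. On the other hand, $\operatorname{Curl}_g^2\alpha = *_g d *_g d\alpha = \delta_g(*_g *_g d\alpha)=\delta_g d\alpha$, using the identity above with $d\alpha$ in place of $*_g\alpha$. The two expressions agree, so $\Delta_1=\operatorname{Curl}_g^2$ on coexact forms.

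\textbf{Symmetry and injectivity in item (1).} Since $*_g$ is an $L^2$-isometry, for $\alpha,\beta\in\Omega^1(M)$
\[
\langle *_g d\alpha,\beta\rangle=\langle d\alpha,*_g\beta\rangle=\langle \alpha,\delta_g *_g\beta\rangle=\langle\alpha,*_g d\beta\rangle .
\]
So $\operatorname{Curl}_g$ is symmetric. If $*_g d\alpha=0$ with $\alpha$ coexact, then $d\alpha=0$ and $\delta_g\alpha=0$, so $\alpha$ is harmonic. By the Hodge decomposition, harmonic forms are $L^2$-orthogonal to coexact forms, hence $\alpha=0$ and $0$ is not an eigenvalue.

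\textbf{Discreteness and self-adjointness in item (1).} I would use item (2). The restriction of $\Delta_1$ to the closed subspace $\overline{\delta_g\Omega^2}$ is a positive self-adjoint elliptic operator with compact resolvent. Its eigenspaces $E_\lambda$, for $\lambda>0$, are finite-dimensional and consist of smooth coexact forms. Because $\operatorname{Curl}_g$ commutes with $\operatorname{Curl}_g^2=\Delta_1$, it preserves each $E_\lambda$ and acts there as a symmetric endomorphism $T$ with $T^2=\lambda$. Diagonalizing $T$ splits $E_\lambda=E_\lambda^+\oplus E_\lambda^-$ into the $\pm\sqrt\lambda$ eigenspaces. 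Summing over $\lambda$ gives an orthonormal eigenbasis for $\operatorname{Curl}_g$. This yields essential self-adjointness on smooth coexact forms, with discrete spectrum whose absolute values tend to $\infty$.

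\textbf{Main obstacle: unboundedness in both directions.} The delicate point is that the spectrum accumulates at both $+\infty$ and $-\infty$; discreteness alone does not exclude, say, only finitely many negative eigenvalues. My first approach is to embed $\operatorname{Curl}_g$ into the elliptic self-adjoint odd signature operator $B=\begin{pmatrix} *_g d & d\\ \delta_g & 0\end{pmatrix}$ on $\Omega^1\oplus\Omega^0$. Off the coexact part, i.e.\ on exact forms and functions, $B$ has symmetric spectrum $\pm\sqrt{\lambda}$ for $\lambda\in\Spec(\Delta_0)\smallsetminus\{0\}$. The principal symbol of $B$ has eigenvalues $\pm|\xi|$, each with multiplicity two, so the Weyl law for positive and negative eigenvalues separately gives equal leading asymptotics for both halves of $\Spec(B)$. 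Subtracting the symmetric part then leaves infinitely many eigenvalues of each sign for $\operatorname{Curl}_g$. If this bookkeeping becomes heavy, I would instead simply invoke \cite[Thm.~2.1]{CapoferriVassiliev-BeyondHodge} for this point.

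\textbf{Item (3).} This follows from the splitting $E_\lambda=E_\lambda^+\oplus E_\lambda^-$. A coexact $\alpha$ satisfies $\Delta_1\alpha=\lambda\alpha$ if and only if $\alpha\in E_\lambda$. That holds if and only if $\alpha$ is a sum of $\operatorname{Curl}_g$-eigenforms with eigenvalues $\mu=\pm\sqrt\lambda$, i.e.\ with $\mu^2=\lambda$. For the converse direction, take the spectral expansion of $\alpha$ with respect to $\operatorname{Curl}_g$ and apply $\operatorname{Curl}_g^2$ termwise.
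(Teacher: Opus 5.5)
Your argument is correct. It differs from the paper mainly in that the paper gives no proof at all: the proposition is imported from the literature, with items (2) and (3) taken from Bär's Lemma~2.3 and item (1) from Theorem~2.1 of Capoferri--Vassiliev.

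You instead derive almost everything from the single identity $\operatorname{Curl}_g=\delta_g\circ *_g$ on $1$-forms. It shows that $\operatorname{Curl}_g$ maps into the coexact forms, that $\operatorname{Curl}_g^2=\delta_g d=\Delta_1$ there, that $\operatorname{Curl}_g$ is symmetric, and that it is injective on coexact forms. The $\pm\sqrt{\lambda}$ splitting of the finite-dimensional $\Delta_1$-eigenspaces $E_\lambda$ then gives essential self-adjointness, discreteness and item (3) by pure Hodge theory. Strictly speaking, the restriction of $\Delta_1$ to $\overline{\delta_g\Omega^2}$ is not itself elliptic. It nonetheless inherits self-adjointness and compact resolvent, because $\Delta_1$ commutes with the Hodge projections.

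You correctly isolate the one point that does not follow from this, namely accumulation at both $+\infty$ and $-\infty$. Your reduction via the odd signature operator is sound: $B^2=\Delta_1\oplus\Delta_0$, the exact forms together with the functions contribute exactly the symmetric pairs $\pm\sqrt{\lambda}$ for $\lambda\in\Spec(\Delta_0)\smallsetminus\{0\}$, and the symbol eigenvalues $\pm|\xi|$ each have multiplicity two. The two-sided Weyl law you invoke is genuinely microlocal input. One way to get it is via the pseudodifferential projection $\tfrac12\bigl(1\pm B(B^2)^{-1/2}\bigr)$, defined off the finite-dimensional kernel of $B$, together with heat-trace asymptotics. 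This even avoids a Tauberian step, since you only need $\sum_{\mu<0}e^{-t\mu^2}\to\infty$ as $t\to 0$ over negative $\operatorname{Curl}_g$-eigenvalues. Falling back on the citation at this point, as you propose, is therefore reasonable.

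What your route buys is transparency about what the paper actually uses. The reduction in Proposition~\ref{prop:coclosed_bound_proof} to $\operatorname{Curl}_g$-eigenforms with $\lambda=\mu^2$ needs only the elementary part of your argument. The two-sided accumulation is never used downstream.
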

{The spectrum of the Curl operator has been computed on Berger $3$-spheres ($b=c$) by Gibbons \cite[Eq. (5.11)]{Gibbons-SpectralAsymmetry} in a physics context and analyzed by Capoferri and Vassiliev \cite[Thm. D.1]{CapoferriVassiliev-BeyondHodge}. The authors cannot be sure whether this was the inspiration for the AI, as the model did not provide citations and its internal `Chain of Thought' log was not insightful. 
}

To bound the non-zero eigenvalues of $\Delta_1$ on coexact $1$-forms from below, it is strictly sufficient to bound the absolute value of the eigenvalues of $\operatorname{Curl}_g$.
Write $g_0 = g_{(1,1,1)}$ for the round metric on $\SU(2)$ or $\SO(3)$ with constant sectional curvature $1$.
%
Recall that the left-invariant $1$-forms dual to the standard basis $(E_1, E_2, E_3)$ of $\su(2)$ are denoted by $E_1^*, E_2^*, E_3^*$, where $dE_1^* = -2 E_2^* \wedge E_3^*$ (and cyclically). For the metric $g = g_{(a,b,c)}$, the basis $(\frac{1}{a}E_1^*, \frac{1}{b}E_2^*, \frac{1}{c}E_3^*)$ is orthonormal, and the Riemannian volume form is $\mu_g = \frac{1}{abc} E_1^* \wedge E_2^* \wedge E_3^*$.

We define a diagonal endomorphism $T: \Omega^1(\SU(2)) \to \Omega^1(\SU(2))$ acting on the left-invariant frame as
\begin{equation} \label{eq:operator_T}
	T(E_1^*) = a^2 E_1^*, \quad T(E_2^*) = b^2 E_2^*, \quad T(E_3^*) = c^2 E_3^*.
\end{equation}

\begin{lemma}[Transformation of $\delta_g$ and $\operatorname{Curl}_g$] \label{lem:operator_transform}
	For any $1$-form $\alpha$, we have the following relations:
	\begin{align}
		\delta_g \alpha &= \delta_0 (T \alpha), \label{eq:delta_transform} \\
		\operatorname{Curl}_g \alpha &= abc\, T^{-1} \operatorname{Curl}_0 \alpha, \label{eq:curl_transform}
	\end{align}
	where $\delta_0$ and $\operatorname{Curl}_0$ denote the codifferential and Curl operator with respect to 
	$g_0$.
\end{lemma}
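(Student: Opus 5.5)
Both identities are pointwise algebraic statements about the Hodge star, so I would prove them by comparing $*_g$ with $*_0$ on the left-invariant coframe. Write $\alpha=\sum_i f_i E_i^*$ with smooth coefficients $f_i$. The metric $g$ has orthonormal coframe $\frac1aE_1^*,\frac1bE_2^*,\frac1cE_3^*$, while $g_0$ has orthonormal coframe $E_1^*,E_2^*,E_3^*$. Both metrics use the orientation given by $E_1^*\wedge E_2^*\wedge E_3^*$.

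First I compute $*_g$ on $1$-forms. From $*_g(\frac1aE_1^*)=\frac1{bc}E_2^*\wedge E_3^*$ we get $*_gE_1^*=\frac{a}{bc}E_2^*\wedge E_3^*$, and the other two cases follow cyclically. Comparing with $*_0E_1^*=E_2^*\wedge E_3^*$ gives
\[
*_g\alpha=\tfrac{1}{abc}\,*_0(T\alpha), \qquad \text{since } \tfrac{a}{bc}=\tfrac{a^2}{abc}.
\]
Next I compute $*_g$ on $2$-forms. We have $*_g(E_2^*\wedge E_3^*)=bc\,*_g(\frac1bE_2^*\wedge\frac1cE_3^*)=\frac{bc}{a}E_1^*$, and again the other cases follow cyclically. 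Comparing with $*_0(E_2^*\wedge E_3^*)=E_1^*$, this reads
\[
*_g\beta=abc\,T^{-1}*_0\beta, \qquad \text{since } \tfrac{bc}{a}=\tfrac{abc}{a^2}.
\]
Finally, on $3$-forms $*_g(E_1^*\wedge E_2^*\wedge E_3^*)=abc$ and $*_0(E_1^*\wedge E_2^*\wedge E_3^*)=1$, so $*_g=abc\,*_0$ in degree $3$.

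The Curl identity \eqref{eq:curl_transform} is then immediate. The exterior derivative does not depend on the metric, so $\operatorname{Curl}_g\alpha=*_g d\alpha=abc\,T^{-1}*_0d\alpha=abc\,T^{-1}\operatorname{Curl}_0\alpha$.

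For the codifferential on $1$-forms in dimension $3$, I use $\delta=(-1)^{n(p+1)+1}*d*$, which here is $\delta=-*d*$ for both metrics. The $3$-form step gives $*_g=abc\,*_0$ on $3$-forms, and the $1$-form step gives $*_g\alpha=\frac1{abc}*_0(T\alpha)$. Then
\[
\delta_g\alpha=-*_g\,d\Bigl(\tfrac1{abc}*_0T\alpha\Bigr)=-\tfrac{abc}{abc}\,*_0\,d*_0(T\alpha)=\delta_0(T\alpha).
\]
The constant $\frac1{abc}$ passes through $d$ because it is a constant.

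The only delicate point is bookkeeping: the same sign convention for $\delta$ must be used for both metrics, and I must confirm that $g_0=g_{(1,1,1)}$ has $E_i^*$ orthonormal. The latter holds by the choice of $\langle\cdot,\cdot\rangle_0$. All maps involved are $C^\infty(M)$-linear in the coefficients $f_i$, except $d$, which is metric-independent, so nothing beyond these pointwise computations is needed.
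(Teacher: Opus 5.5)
Your proposal is correct and follows essentially the same route as the paper. Both compare $*_g$ with $*_0$ on the left-invariant coframe in degrees $1$, $2$ and $3$, read off the Curl identity directly from the degree-$2$ relation, and obtain the codifferential identity from $\delta=-*d*$ together with $*_g=abc\,*_0$ on $3$-forms.
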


\begin{proof}
	We explicitly compute the action of the Hodge star $*_g$ on the basis of left-invariant forms.
	Since the basis $\left(\frac{1}{a}E_1^*, \frac{1}{b}E_2^*, \frac{1}{c}E_3^*\right)$ is $g$-orthonormal, we have
	\[
	*_g \left(\frac{1}{a}E_1^*\right) = \left(\frac{1}{b}E_2^*\right) \wedge \left(\frac{1}{c}E_3^*\right) \quad \Longrightarrow \quad *_g(E_1^*) = \frac{a}{bc} E_2^* \wedge E_3^* = \frac{a^2}{abc} E_2^* \wedge E_3^*.
	\]
	For the round metric $g_0$ (where $a=b=c=1$), we simply have $*_0(E_1^*) = E_2^* \wedge E_3^*$. 
	
	Recalling the definition of the operator $T$, we find $\frac{1}{abc} *_0 (T E_1^*) = \frac{a^2}{abc} *_0(E_1^*) = \frac{a^2}{abc} E_2^* \wedge E_3^*$. By linearity, this establishes the following relation for any $1$-form $\alpha$:
	\begin{equation} \label{eq:star_1form}
		*_g \alpha = \frac{1}{abc} *_0 (T \alpha).
	\end{equation}
	Similarly, for $2$-forms, applying $*_g$ to the orthonormal basis element yields
	\[
	*_g \left(\frac{1}{bc} E_2^* \wedge E_3^*\right) = \frac{1}{a} E_1^* \quad \Longrightarrow \quad *_g (E_2^* \wedge E_3^*) = \frac{bc}{a} E_1^* = \frac{abc}{a^2} E_1^*.
	\]
	Since $*_0(E_2^* \wedge E_3^*) = E_1^*$ and $T^{-1}(E_1^*) = \frac{1}{a^2} E_1^*$, we deduce 
	for any $2$-form $\omega$ that
	\begin{equation} \label{eq:star_2form}
		*_g \omega = abc\, T^{-1} (*_0 \omega).
	\end{equation}
Furthermore, $*_g = abc *_0$ on $3$-forms.
		
We now use \eqref{eq:star_1form} 
to evaluate the codifferential in dimension $3$. 
For $\alpha$ a $1$-form, we get
\begin{align*}
\delta_g \alpha &
 = -*_gd*_g\alpha
= -*_g d \left( \frac{1}{abc} *_0 (T \alpha) \right) 
\\ & 
= - \left(abc *_0\right) \left( \frac{1}{abc} d *_0 (T \alpha) \right) 
= -*_0 d *_0 (T \alpha) = \delta_0(T\alpha).
\end{align*}
	For the Curl operator $\operatorname{Curl}_g \alpha = *_g (d\alpha)$, applying the rule for $2$-forms \eqref{eq:star_2form} immediately yields
	\[
	\operatorname{Curl}_g \alpha = abc\, T^{-1} (*_0 d\alpha) = abc\, T^{-1} \operatorname{Curl}_0 \alpha. \qedhere
	\]
\end{proof}
\begin{remark}\label{rem:curl operator vs weitzenboeck}
	The transformation operator $abc\, T^{-1}$ in \eqref{eq:curl_transform} exhibits a direct correspondence to the constant fiber-wise tensor computed via the Weitzenböck formula in Section \ref{sec:Hodge SU(2)}. 
	
	Recall from Proposition \ref{prop:Hodge-Laplace_on_k_rep} that the representation-independent part of the Hodge Laplacian is given by the diagonal matrix:
	\[
	-C_{\nabla}^{\su(2)} + 2q(R) = 4\diag\left(\frac{b^{2}c^{2}}{a^{2}},\frac{a^{2}c^{2}}{b^{2}},\frac{a^{2}b^{2}}{c^{2}}\right).
	\]
	Since $\operatorname{Curl}_0$ acts on the left-invariant basis forms as $\operatorname{Curl}_0(E_1^*) = -2 E_1^*$, the transformed operator yields $\operatorname{Curl}_g(E_1^*) = abc\, T^{-1}(-2 E_1^*) = -2\frac{bc}{a}E_1^*$. Squaring this relation recovers exactly $4\frac{b^2c^2}{a^2}$. 
	Thus, the transformation matrix $(abc\, T^{-1})^2$ governing the Curl operator is, up to a constant factor of $4$, identical to the geometric tensor $-C_{\nabla}^{\su(2)} + 2q(R)$.
\end{remark}

\subsection{Bounding the coexact eigenvalues}
We now establish the sharp lower bound for the eigenvalues on coexact 1-forms using the factorisation of the Hodge Laplacian and the transformation properties of the Curl operator.
\begin{proposition}\label{prop:coclosed_bound_proof}
Let $\Delta_1$ denote the Hodge-Laplace operator on $1$-forms associated to $\SU(2)$ endowed with $g_{\abc}$, and let $\alpha$ be a $1$-form on $\SU(2)$.  
If $\Delta_1 \alpha=\lambda\alpha$, then 
$$
\lambda \geq 
\min\left(4\frac{b^2c^2}{a^2}, 4\frac{a^2c^2}{b^2}, 4\frac{a^2b^2}{c^2}\right)
.
$$
Moreover, equality is attained by one of the left-invariant forms $E_1^*$, $E_2^*$, or $E_3^*$. 
%
\end{proposition}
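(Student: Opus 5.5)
The bound cannot hold for all eigenforms: by Remark~\ref{rem: interpretation Berger} the exact eigenvalue $a^2+b^2+c^2$ occurs, and it can lie below the stated minimum. I will therefore prove the statement for \emph{coexact} eigenforms $\alpha$. This is the case needed for Conjecture~\ref{conj:first_eigenvalue}, because eigenvalues of exact forms $df$ are eigenvalues of $\Delta_0$. There are no harmonic $1$-forms, since $H^1(S^3)=0$.

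\textbf{Reduction to Curl.} By Proposition~\ref{prop:curl_properties}, write $\alpha$ as a combination of $\operatorname{Curl}_g$-eigenforms $\beta$ with $\operatorname{Curl}_g\beta=\mu\beta$, $\mu^2=\lambda$, and $\beta$ $g$-coexact. It then suffices to prove
\[
|\mu|\ \ge\ \frac{2abc}{\max(a^2,b^2,c^2)} = 2\min\Big(\tfrac{bc}{a},\tfrac{ac}{b},\tfrac{ab}{c}\Big).
\]
By Lemma~\ref{lem:operator_transform}, the equation becomes $\operatorname{Curl}_0\beta=\tfrac{\mu}{abc}\,\gamma$, where $\gamma:=T\beta$. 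Moreover $\delta_0\gamma=\delta_g\beta=0$, so $\gamma$ is $g_0$-coexact. All inner products and norms below are $L^2(g_0)$.

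\textbf{Key estimate.} Decompose $\beta=\beta_c+df$ by the $g_0$-Hodge decomposition, with $\beta_c$ $g_0$-coexact. Then $\operatorname{Curl}_0\beta=\operatorname{Curl}_0\beta_c$.

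On the round sphere, the coexact spectrum of $\Delta_1$ is $\{(k+2)^2\}_{k\ge0}$. This follows from Theorem~\ref{thm4:Spec(Berger)} with $a=b=1$, or from the symmetric-space corollary. Hence $|\operatorname{Curl}_0|\ge 2$ on $g_0$-coexact forms, and therefore
\[
\|\beta_c\|\le \tfrac{|\mu|}{2abc}\|\gamma\|.
\]

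Since $\gamma$ is $g_0$-orthogonal to exact forms, $\langle\beta,\gamma\rangle=\langle\beta_c,\gamma\rangle\le\|\beta_c\|\,\|\gamma\|$. On the other hand, $T$ is diagonal and positive in the $g_0$-orthonormal frame $E_i^*$. This gives
\[
\langle\beta,\gamma\rangle=\langle T^{-1}\gamma,\gamma\rangle\ge\frac{\|\gamma\|^2}{\max(a^2,b^2,c^2)}.
\]
Combining the two inequalities and dividing by $\|\gamma\|^2\neq0$ yields $|\mu|\ge 2abc/\max(a^2,b^2,c^2)$, as required.

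\textbf{Equality and $\SO(3)$.} By Remark~\ref{rem:curl operator vs weitzenboeck}, $\operatorname{Curl}_g E_1^*=-2\tfrac{bc}{a}E_1^*$, and cyclically for $E_2^*,E_3^*$. These forms are coexact, so the minimum is attained by the $E_i^*$ with the largest scale among $a,b,c$. The same argument applies verbatim on $\SO(3)$: eigenforms there lift to $\SU(2)$, and the $E_i^*$ descend.

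The step I expect to need the most care is the operator bound $|\operatorname{Curl}_0|\ge2$ on $g_0$-coexact forms, together with the symmetry of $\operatorname{Curl}_0$, which follows from $\int d\alpha\wedge\beta=\int\alpha\wedge d\beta$. The inequality chain itself is short once $\gamma=T\beta$ is recognized as $g_0$-coexact.
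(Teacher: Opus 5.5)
Your proposal is correct and follows essentially the same route as the paper. You transform via Lemma~\ref{lem:operator_transform} to $\operatorname{Curl}_0\beta=\tfrac{\mu}{abc}T\beta$, project onto $g_0$-coexact forms, use the round spectral gap $|\operatorname{Curl}_0|\ge 2$, and combine Cauchy--Schwarz with $\langle T^{-1}\gamma,\gamma\rangle_0\ge \|\gamma\|_0^2/\max(a^2,b^2,c^2)$; your $\beta,\gamma,\beta_c$ are the paper's $\alpha,\beta,\psi$. Your restriction to coexact eigenforms is also right: the statement as written fails for exact forms (e.g.\ $3<4$ on the round sphere), and the paper's proof silently uses $\delta_g\alpha=0$, applying the result only to the coexact part in Theorem~\ref{thm:first eigenvalue}.
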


\begin{proof}
We assume without loss of generality that $a \ge b \ge c > 0$.
By Proposition \ref{prop:curl_properties}, we may also assume that $\alpha \neq 0$ is an eigenform of $\operatorname{Curl}_g$ with $\operatorname{Curl}_g \alpha = \mu \alpha$, and $\lambda = \mu^2$. Our goal is to show $|\mu| \ge 2\frac{bc}{a}$.
	
	We set $\tilde{\lambda} := \frac{\mu}{abc}$ and define the $1$-form $\beta = T\alpha$. By Lemma \ref{lem:operator_transform}, since $\delta_g \alpha = 0$, we have $\delta_0 \beta = 0$, meaning $\beta$ is a coclosed $1$-form with respect to the round metric $g_0$. Substituting into \eqref{eq:curl_transform} yields
	\begin{equation}
		\operatorname{Curl}_0 \alpha = \tilde{\lambda} (T\alpha) = \tilde{\lambda}\beta.
		\label{eq:C0_alpha}
	\end{equation}
	
	Let $P_0$ be the $L^2(g_0)$-orthogonal projection onto $\ker \delta_0$. We project $\alpha$ by setting $\psi := P_0(\alpha)$. By the Hodge decomposition for $g_0$, $\alpha = \psi + d\phi$ for some function $\phi$. Since the Curl operator vanishes on exact forms, $\operatorname{Curl}_0 \psi = \operatorname{Curl}_0 \alpha = \tilde{\lambda} \beta$.
	
	We now utilize the known spectrum of $\operatorname{Curl}_0$.
	For $a=b=c=1$, the non-zero absolute eigenvalues of $\operatorname{Curl}_0$ on coexact forms are explicitly given by $k+2$ for integers $k \ge 0$. Thus, the smallest absolute non-zero eigenvalue of $\operatorname{Curl}_0$ on $\ker \delta_0$ is exactly $2$, yielding the spectral gap inequality $\|\operatorname{Curl}_0\psi\|_0 \ge 2\|\psi\|_0$, where $\|\cdot\|_0$ denotes the $L^2(g_0)$-norm. Taking the norm of $\operatorname{Curl}_0 \psi = \tilde{\lambda} \beta$ gives
	\begin{equation}
		|\tilde{\lambda}|\,\|\beta\|_0 = \|\operatorname{Curl}_0\psi\|_0 \ge 2\|\psi\|_0.
		\label{eq:spectral_gap_ineq}
	\end{equation}
	
	On the other hand, since $\beta \in \ker \delta_0$, it is $L^2(g_0)$-orthogonal to the exact form $d\phi$. Hence,
	\[
	\langle \psi,\beta\rangle_0 = \langle \alpha - d\phi,\beta\rangle_0 = \langle \alpha,\beta\rangle_0 = \langle T^{-1}\beta,\beta\rangle_0.
	\]
	Since we assumed $a \ge b \ge c$, the diagonal operator $T^{-1} = \diag(a^{-2},b^{-2},c^{-2})$ satisfies $T^{-1} \ge a^{-2}\Id$ in the sense of symmetric operators. Therefore, $\langle T^{-1}\beta,\beta\rangle_0 \ge a^{-2}\|\beta\|_0^2$. Applying the Cauchy-Schwarz inequality to the left side, we get
	\[
	\|\psi\|_0\,\|\beta\|_0 \ge \langle \psi,\beta\rangle_0 \ge a^{-2}\|\beta\|_0^2 \quad \implies \quad \|\psi\|_0 \ge a^{-2}\|\beta\|_0.
	\]
	Combining this with \eqref{eq:spectral_gap_ineq} yields
	\[
	|\tilde{\lambda}|\,\|\beta\|_0 \ge 2\|\psi\|_0 \ge 2a^{-2}\|\beta\|_0 \quad \implies \quad |\tilde{\lambda}| \ge \frac{2}{a^2}.
	\]
	Recalling that $\mu = \tilde{\lambda} abc$, we obtain $|\mu| = abc\,|\tilde{\lambda}| \ge 2\frac{bc}{a}$. Thus, $\lambda = \mu^2 \ge 4\frac{b^2c^2}{a^2}$.
	
To see that this bound is sharp, we refer back to our explicit computations in Section \ref{sec:Hodge SU(2)}. The $k=0$ isotypic component corresponds precisely to the $3$-dimensional space of left-invariant $1$-forms, which is spanned by $E_1^*, E_2^*$, and $E_3^*$. Since the first Betti number of $\SU(2)$ vanishes, these non-zero left-invariant forms are coexact. As already established in \eqref{eq:spec_k0}, the eigenvalues of $\Delta_1$ on this subspace are exactly $4\frac{b^2c^2}{a^2}$, $4\frac{a^2c^2}{b^2}$, and $4\frac{a^2b^2}{c^2}$. 
\end{proof}

\subsection{Conclusion of the proof}

We are now ready to establish the First Eigenvalue Conjecture fully for both $\SU(2)$ and $\SO(3)$.

\begin{theorem}\label{thm:first eigenvalue}
The smallest non-zero eigenvalue of the Hodge-Laplacian on $1$-forms for $g_{(a,b,c)}$ is given by
	\begin{align*}
		\lambda_1(\SU(2), g_{(a,b,c)}) &= \min\left( \frac{4b^2c^2}{a^2}, \frac{4a^2c^2}{b^2}, \frac{4a^2b^2}{c^2}, a^2+b^2+c^2 \right), \\
		\lambda_1(\SO(3), g_{(a,b,c)}) &= \min\left( \frac{4b^2c^2}{a^2}, \frac{4a^2c^2}{b^2}, \frac{4a^2b^2}{c^2} \right).
	\end{align*}
\end{theorem}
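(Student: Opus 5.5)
The plan is to split $\Omega^1$ by the Hodge decomposition and bound each part separately. Since $H^1(\SU(2))=H^1(\SO(3))=0$, there are no harmonic $1$-forms. Every $1$-form is therefore the $L^2(g)$-orthogonal sum of an exact and a coexact part, and $\Delta_1$ preserves both. Hence
\[
\lambda_1(\Delta_1)=\min\bigl(\lambda_1^{\mathrm{ex}},\lambda_1^{\mathrm{coex}}\bigr).
\]
The coexact part is already settled by Proposition~\ref{prop:coclosed_bound_proof}. Applied to $\SU(2)$, it gives $\lambda_1^{\mathrm{coex}}=\min(4b^2c^2/a^2,\,4a^2c^2/b^2,\,4a^2b^2/c^2)$, with equality attained by a left-invariant form.

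For $\SO(3)$, I would lift a coexact eigenform along the double cover $\SU(2)\to\SO(3)$. The pullback is a $\pm\Id$-invariant eigenform on $(\SU(2),g_{\abc})$ with the same eigenvalue, since the covering is a local isometry. So the same lower bound holds. Sharpness persists because the left-invariant forms $E_i^*$ descend to $\SO(3)$; they lie in the $k=0$ component, which is even.

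For the exact part, I use that $d$ intertwines $\Delta_0$ and $\Delta_1$. Then $f\mapsto df$ maps nonconstant $\Delta_0$-eigenfunctions bijectively onto exact $\Delta_1$-eigenforms with the same eigenvalue, so $\lambda_1^{\mathrm{ex}}=\lambda_1(\Delta_0)$. Here I would invoke the known first eigenvalue of the Laplace--Beltrami operator from \cite{Lauret-SpecSU(2)}:
\[
\lambda_1(\Delta_0)=a^2+b^2+c^2 \quad\text{on }\SU(2),
\]
attained in the $k=1$ component. This is consistent with $\lambda_{\text{exact}}$ in Section~\ref{subsec:firsteigenvalue}. Combining this with the coexact bound gives the $\SU(2)$ formula; Proposition~\ref{prop:k1_eigenvalue_comparison} confirms that nothing smaller arises in $k\le1$.

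For $\SO(3)$, functions correspond to even $k$. The relevant input is then the first eigenvalue of $\Delta_0$ restricted to $k=2$ (odd $k$ is excluded), which is
\[
\min\bigl(4(a^2+b^2),\,4(a^2+c^2),\,4(b^2+c^2)\bigr)\ \text{ or a similar expression from the $k=2$ matrix.}
\]
I then need to show that this value is at least $\min(4b^2c^2/a^2,\dots)$. This comparison is the step I expect to be the main obstacle, since it requires the explicit $k=2$ scalar spectrum.

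I would try the following first. Order $a\ge b\ge c$, so the coexact minimum is $m=4b^2c^2/a^2\le 4c^2$. Then show that every $k=2$ eigenvalue of $\Delta_0$ is at least $4c^2$. For this I would use the Rayleigh quotient of $\Delta_0=-\sum_i X_i^2$: on $V_2$ we have $-\sum_i X_i^2 = -\sum_i x_i^2 E_i^2$ with $(x_1,x_2,x_3)=(a,b,c)$. Since each $-E_i^2$ is a nonnegative operator, this gives $\Delta_0\ge -c^2\sum_i E_i^2 = c^2\,C_0$, where $C_0=-\sum_i E_i^2$ is the round Casimir. The round Casimir equals $k(k+2)=8$ on $V_2$. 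Hence every $k=2$ eigenvalue is at least $8c^2\ge 4c^2\ge m$. The same monotonicity argument, using $k(k+2)\ge 8$ for all even $k\ge2$, handles all higher even weights at once. This finishes $\SO(3)$, and by the above it also confirms the $\SU(2)$ exact bound if \cite{Lauret-SpecSU(2)} is not invoked directly.
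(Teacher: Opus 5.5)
\textbf{A false intermediate claim.} Your overall structure matches the paper's: the Hodge decomposition with $H^1=0$, Proposition~\ref{prop:coclosed_bound_proof} for the coexact part, and $d$ intertwining $\Delta_0$ and $\Delta_1$ for the exact part. Your lifting argument along $\SU(2)\to\SO(3)$ is useful: it makes explicit a step the paper passes over, since that proposition is only stated for $\SU(2)$.

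The claim $\lambda_1(\Delta_0)=a^2+b^2+c^2$ on $(\SU(2),g_{\abc})$ is false in general. The result in \cite{Lauret-SpecSU(2)} is $\min\bigl(a^2+b^2+c^2,4(a^2+b^2),4(a^2+c^2),4(b^2+c^2)\bigr)$. For $a^2>3(b^2+c^2)$ the $k=2$ value $4(b^2+c^2)$ is strictly smaller: $a=3$, $b=c=1$ gives $8<11$, and on Berger spheres this value is $\nu_{2,1}=8b^2$. So ``combining this with the coexact bound gives the $\SU(2)$ formula'' rests on a misquotation.

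Your closing Casimir argument cannot confirm that identity either. It shows something weaker: every exact eigenvalue coming from $k\ge2$ is at least $8c^2>4c^2\ge m=4b^2c^2/a^2$ (with $a\ge b\ge c$), so it cannot be the minimum. That weaker statement is all the theorem needs. Delete the false identity and argue directly:
\begin{itemize}
\item on $V_1$, where $d\varrho_1(E_i)^2=-\Id$, the exact eigenvalue is $a^2+b^2+c^2$;
\item on $V_k$ with $k\ge2$, every exact eigenvalue is at least $c^2k(k+2)\ge 8c^2$.
\end{itemize}
With that change your proof is complete for both $\SU(2)$ and $\SO(3)$.

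\textbf{Comparison with the paper's route.} Once repaired, your treatment of the exact part differs from the paper's. The paper imports the full first-eigenvalue theorem for $\Delta_0$ from \cite{Lauret-SpecSU(2)}, a nontrivial result that must control all weights $k$. It then checks the elementary inequality $\min(4b^2c^2/a^2,\dots)\le\min(4(a^2+b^2),\dots)$.

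You instead use the operator inequality $-\sum_i x_i^2\,d\varrho_k(E_i)^2\ge\min(a,b,c)^2\,k(k+2)$, which follows from $-d\varrho_k(E_i)^2=d\varrho_k(E_i)^*d\varrho_k(E_i)\ge0$. This is self-contained, uniform in $k$, and handles $\SU(2)$ and $\SO(3)$ at once. The only cost is that it does not identify the first eigenvalue of $\Delta_0$ exactly, which the theorem never requires.
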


\begin{proof}
	Because $H^1(G)=0$ for $G \in \{\SU(2), \SO(3)\}$, the non-zero spectrum of $\Delta_1$ is the disjoint union of the spectra on exact $1$-forms and coexact $1$-forms. Furthermore, the exact spectrum of $\Delta_1$ strictly coincides with the non-zero spectrum of the Laplace-Beltrami operator $\Delta_0$ on functions, as the exterior derivative $d$ intertwines the operators ($\Delta_1 d = d \Delta_0$).
	
	For the \textit{coexact part}, Proposition \ref{prop:coclosed_bound_proof} establishes that the minimum eigenvalue is strictly given by $\min\left(4\frac{b^2c^2}{a^2}, 4\frac{a^2c^2}{b^2}, 4\frac{a^2b^2}{c^2}\right)$.
	
	It remains to compare this with the \textit{exact part} (the spectrum of $\Delta_0$):
	\begin{itemize}

\item \textit{On $\SU(2)$:} 
	The smallest positive eigenvalue of $\Delta_0$ is given by (see \cite[Thm.~1.1]{Lauret-SpecSU(2)})
	$$
	\min\left(a^2+b^2+c^2, 4(a^2+b^2), 4(a^2+c^2), 4(b^2+c^2)\right).
	$$
	One can easily check that 
	\begin{equation}\label{eq:min(Delta_1^0)<=min(Delta_0^2)}
	\min\left( \frac{4b^2c^2}{a^2}, \frac{4a^2c^2}{b^2}, \frac{4a^2b^2}{c^2} \right)
	\leq 
	\min\left(4(a^2+b^2), 4(a^2+c^2), 4(b^2+c^2)\right),
	\end{equation}
	thus the overall first eigenvalue of $\Delta_1$ is the one stated. 
%

%

\item \textit{On $\SO(3)$:} 
	The smallest positive eigenvalue of $\Delta_0$ is given by (see \cite[Thm.~1.2]{Lauret-SpecSU(2)})
	$$
	\min\left(4(a^2+b^2), 4(a^2+c^2), 4(b^2+c^2)\right),
	$$
	so the stated formula follows again from \eqref{eq:min(Delta_1^0)<=min(Delta_0^2)}.

	\end{itemize}
	This completes the proof.
\end{proof}

\section{Spectral inverse result}
In this section, we investigate the inverse spectral problem for the Hodge-Laplace operators on homogeneous 3-spheres and the projective space $\SO(3)$. We focus on the spectrum of $\Delta_1$ acting on 1-forms. Note that due to the Hodge star isomorphism and the commutation with the exterior derivative, the spectrum of $\Delta_1$ coincides with the spectrum of $\Delta_2$ and contains the spectrum of the Laplacian on functions $\Delta_0$ (excluding zero). Thus, $\Delta_1$ contains the most comprehensive spectral information.

Furthermore, leveraging the explicit formula for the first eigenvalue established in Theorem \ref{thm:first eigenvalue}, we prove that the spectrum of $\Delta_1$ determines the isometry class of the metric $g_{(a,b,c)}$ unconditionally.

\subsection{Spectral invariants}
We show that for homogeneous 3-manifolds, several fundamental geometric invariants are indeed spectrally determined.
	\begin{proposition}
		Let \((M^{3},g)\) be a three-dimensional homogeneous Riemannian manifold.  
Then 
	the volume \(\operatorname{vol}(M,g)\), 
	the scalar curvature \(\mathrm{Scal}\), 
	the norm of the full curvature tensor \(\|R\|\), 
	and the norm of the Ricci tensor \(\|\operatorname{Ric}\|\) 
are each spectral invariants of both
		the Hodge--Laplacian and the Laplace--Beltrami operator.
	\end{proposition}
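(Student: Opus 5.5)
The plan is to use the heat trace asymptotics (Minakshisundaram--Pleijel) for both operators and exploit homogeneity. For a Laplace-type operator $P=\nabla^*\nabla+E$ on a vector bundle $V$ of rank $r$ over a closed $n$-manifold,
\[
\operatorname{Tr} e^{-tP}\sim (4\pi t)^{-n/2}\sum_{j\ge0} a_j(P)\,t^j ,
\]
where $a_j(P)=\int_M \operatorname{tr}\, u_j(x)\,dx$. Each $u_j$ is a universal polynomial in the curvature, in the endomorphism $E$ and in their covariant derivatives. Since $(M,g)$ is homogeneous, every local scalar invariant is constant. Hence each $a_j$ equals $\vol(M,g)$ times a pointwise constant, and all terms containing covariant derivatives whose integrals vanish may be dropped.

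The key steps, in order, are as follows.

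\textbf{Step 1.} From $a_0=r\,\vol(M,g)$ we recover the volume. We take $r=1$ for $\Delta_0$ and $r=3$ for $\Delta_1$.

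\textbf{Step 2.} We use $a_1=\frac16\int \operatorname{tr}(6E+\Scal\cdot\Id)$. For $\Delta_0$ we have $E=0$, giving $a_1=\frac16\Scal\,\vol$, so $\Scal$ is recovered. For $\Delta_1$, the Weitzenböck formula \eqref{eq:generalHodgeLaplacian-definition} with Lemma~\ref{lem:curvature_operator} gives $E=\Ric$. Then $a_1=\frac16(3\Scal+6\Scal)\vol=\frac32\Scal\,\vol$, which again determines $\Scal$.

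\textbf{Step 3.} We use the Gilkey formula for $a_2$. Discarding divergence terms such as $\Delta\Scal$ and $\Delta E$, it reads
\[
a_2=\tfrac1{360}\int \operatorname{tr}\big(60\Scal E+180E^2+30\Omega_{ij}\Omega_{ij}+(5\Scal^2-2\|\Ric\|^2+2\|R\|^2)\Id\big).
\]
For $\Delta_0$ this produces one linear relation $5\Scal^2-2\|\Ric\|^2+2\|R\|^2=\text{const}$. For $\Delta_1$ we have $\operatorname{tr}E^2=\|\Ric\|^2$ and $\operatorname{tr}(\Scal E)=\Scal^2$. The bundle curvature term is $\operatorname{tr}(\Omega_{ij}\Omega_{ij})=-\|R\|^2$. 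This yields a second linear combination of $\|\Ric\|^2$ and $\|R\|^2$ with known $\Scal$.

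\textbf{Step 4.} In dimension $3$ the Weyl tensor vanishes, so $\|R\|^2=4\|\Ric\|^2-\Scal^2$. Substituting this into $a_2$ for either operator gives a single linear equation in $\|\Ric\|^2$. We must check that its coefficient is nonzero. For $\Delta_0$ the combination becomes $3\Scal^2+6\|\Ric\|^2$, which is fine. Then $\|\Ric\|$ and hence $\|R\|$ are spectral invariants of each operator separately. Since $\Spec(\Delta_0)\smallsetminus\{0\}\subset\Spec(\Delta_1)$ is not needed, each operator suffices alone.

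The main obstacle is bookkeeping in the $\Delta_1$ coefficient $a_2$. We must get the sign and normalization of $\operatorname{tr}(\Omega^2)$ on $T^*M$ right and confirm that the resulting coefficient of $\|\Ric\|^2$ does not vanish. The plan is to cite the coefficient formula in \cite{BransonGilkeyOrsted-HeatInv} (or Gilkey's book) directly. As a cross-check we will test the formula on the round $S^3$, where $\|R\|^2=12$, $\|\Ric\|^2=12$ and $\Scal=6$, against the known Ikeda--Taniguchi spectrum.
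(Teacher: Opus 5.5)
Your argument is essentially the paper's: homogeneity makes all local invariants constant, $a_0$ and $a_1$ give $\vol$ and $\Scal$, and $a_2$ together with the three-dimensional identity $\|R\|^2=4\|\Ric\|^2-\Scal^2$ determines $\|\Ric\|^2$ and hence $\|R\|^2$ for each operator separately. The only difference is that the paper quotes the coefficients from \cite{BransonGilkeyOrsted-HeatInv}, while you derive them from Gilkey's general formula.

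There is one sign slip in that derivation. Gilkey's formulas for $a_1$ and $a_2$ are stated for $P=\nabla^*\nabla-E$, so with your convention $P=\nabla^*\nabla+E$, $E=\Ric$, the terms linear in $E$ change sign. For $\Delta_1$ this gives $a_1=-\tfrac12\Scal\,\vol$ rather than $\tfrac32\Scal\,\vol$, and $360\,a_2/\vol=-45\Scal^2+174\|\Ric\|^2-24\|R\|^2$, which is exactly twice the paper's expression. The conclusion survives because $-\tfrac12\neq0$. Also, after substituting for $\|R\|^2$, the coefficient of $\|\Ric\|^2$ is $174-96=78\neq0$, which settles the non-vanishing check you left open.
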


	\begin{proof}
		We use \cite{BransonGilkeyOrsted-HeatInv} to find the heat-invariants for functions or $1$-forms on 3-dimensional manifolds:
		\begin{align*}
			a_{0}\propto \vol,\quad a_{1}\propto \Scal^{2},
		\end{align*}
and the coefficient $a_{2}$ is a linear combination of $a_{0},a_{1}$, $\|R\|^2$ and $\|\Ric\|^{2}$. More precisely,
		\begin{align*}
			&a_{2}\propto \frac{5}{2} \Scal^2 - \|\Ric\|^2 + \|R\|^2\quad \text{ for functions},\\
			&a_{2}\propto -\frac{45}{2} \Scal^2 + 87 \|\Ric\|^2 - 12 \|R\|^2\quad \text{ for $1$-forms}.
		\end{align*} 
		
On a $3$-dimensional manifold we have the following orthogonal decomposition of the curvature tensor which is obtained as a special case of the orthogonal decomposition described in \cite[1.114]{Besse-Einstein} to $n=3$ done in \cite[1.119]{Besse-Einstein}:
		\begin{align*}
			R=\frac{\Scal}{12} g\owedge g+\left(\Ric -\frac{\Scal}{3}g\right)\owedge g,
		\end{align*}
		where $\owedge$ denotes the Kulkarni-Nomizu product. We use formulas of the squared length for Kulkarni-Nomizu products stated in \cite[Lem. 7.22]{Lee-IRM} as well as $\langle\Ric,g\rangle =\Scal$ and $\langle g,g\rangle=3$ to conclude that 
		\begin{align*}
			\|R\|^{2}&=\left\|\frac{\Scal}{12} g\owedge g\right\|^{2}+\left\|\left(\Ric -\frac{\Scal}{3}g\right)\owedge g\right\|^{2}\\
			&=\frac{\Scal^{2}}{144}\cdot\left(4{\|g\|^{2}}+4\cdot \tr(g)^{2}\right)+4\cdot \left\|\Ric -\frac{\Scal}{3}g\right\|^{2}+4\cdot \tr\left(\Ric -\frac{\Scal}{3}g\right)^{2}\\
			&=\frac{\Scal^{2}}{144}\cdot 48+4\cdot\left(\|\Ric\|^{2}-\frac{\Scal^{2}}{3}\right)\\
			&=4\|\Ric\|^{2}-\Scal^{2}.
		\end{align*}
		{This formula coincides with \cite{Wikipedia-RicciDecomposition} as the Weyl-tensor vanishes in dimension $3$.}
		As $a_{2}$ and $\Scal$ are spectral invariants, plugging $\|R\|^{2}$ into $a_{2}$, yields that $\|\Ric\|^{2}$ and hence $\|R\|^{2}$ are spectral invariants.
	\end{proof}
	\subsection{Isospectrality results}
	Based on the spectral invariants derived above, we can now address the inverse spectral problem. We first consider the case of non-positive scalar curvature, where the heat invariants alone provide sufficient information.
	\begin{corollary}\label{cor:non-positive scalar curvature isospectral}
	Within the class of homogeneous metrics on $S^3$ or $\SO(3)$ with non-positive scalar curvature, the isometry class is uniquely determined by the spectrum of the Hodge-Laplacian on $p$-forms for any single degree $p \in \{0, 1, 2, 3\}$.
	\end{corollary}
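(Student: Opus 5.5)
The plan is to show that the heat invariants already available (volume, $\Scal$, $\|\Ric\|^2$, from the preceding Proposition) determine $(a,b,c)$ up to permutation when $\Scal\le 0$. Since any permutation of $(a,b,c)$ gives an isometric metric, this suffices. The case $p=3$ reduces to $p=0$, and $p=2$ reduces to $p=1$, by Hodge duality. So it is enough to treat the spectrum of $\Delta_0$ or of $\Delta_1$, and both have $\vol$, $\Scal$, $\|\Ric\|^2$ as spectral invariants. The argument is the same on $\SO(3)$, since its volume is half that of $\SU(2)$.

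\textbf{Change of variables.} I set $x_1=\tfrac{bc}{a}$, $x_2=\tfrac{ac}{b}$, $x_3=\tfrac{ab}{c}$. Then $a^2=x_2x_3$ and cyclically, so $(x_1,x_2,x_3)$ determines $(a,b,c)$. By Lemma~\ref{lem:curvature_operator}, $2q(R)$ on $1$-forms is the Ricci operator, with eigenvalues
\[
\Ric_1=2\bigl(x_1^2-(x_2-x_3)^2\bigr)=2(x_1-x_2+x_3)(x_1+x_2-x_3),
\]
and cyclically. Put $e_1=x_1+x_2+x_3$ and $u_i:=e_1-2x_i$. Then $\Ric_i=2u_ju_k$ for $\{i,j,k\}=\{1,2,3\}$. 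Writing $\sigma_1,\sigma_2,\sigma_3$ for the elementary symmetric functions of the $u_i$, and noting $\sigma_1=e_1$, I get:
\begin{itemize}
\item $\Scal=2\sigma_2$;
\item $\|\Ric\|^2=4\sum_{j<k}u_j^2u_k^2=4(\sigma_2^2-2\sigma_1\sigma_3)$;
\item $x_1x_2x_3=\prod_i(\sigma_1-u_i)/2=(\sigma_1\sigma_2-\sigma_3)/8$.
\end{itemize}
Since $\vol(g_{\abc})=\vol(g_0)/(abc)$ and $abc=x_1x_2x_3$, the spectrum determines three quantities: $S:=\sigma_2$, $A:=\sigma_1\sigma_3$, and $B:=\sigma_1\sigma_2-\sigma_3=8x_1x_2x_3>0$.

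\textbf{Recovering $\sigma_1$.} Substituting $\sigma_3=\sigma_1S-B$ into $A=\sigma_1\sigma_3$ gives
\[
S\sigma_1^2-B\sigma_1-A=0 .
\]
This equation must be solved for $\sigma_1=e_1>0$, and this uniqueness is the main step where the sign hypothesis enters.
\begin{itemize}
\item If $\Scal=0$, the equation is linear, $\sigma_1=-A/B$, which is unique.
\item If $\Scal<0$, the sum of the two roots is $B/S<0$. Hence the roots cannot both be positive, and at most one positive root exists. The actual value $e_1>0$ is such a root, so it is the unique one.
\end{itemize}

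\textbf{Conclusion.} With $\sigma_1$ known, $\sigma_2=S$ and $\sigma_3=\sigma_1S-B$ are known. Then the $u_i$ are determined as the roots of $t^3-\sigma_1t^2+\sigma_2t-\sigma_3$, up to permutation. From them $x_i=(\sigma_1-u_i)/2$, and finally $a^2=x_2x_3$ and cyclically. Thus $(a,b,c)$ is determined up to permutation, which proves the corollary.

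The main obstacle is exactly the root-selection step. For $\Scal>0$ both roots of the quadratic may be positive, which is why this corollary restricts to $\Scal\le 0$. The positive case presumably needs the first-eigenvalue formula (Theorem~\ref{thm:first eigenvalue}) as an extra invariant to select the correct root.
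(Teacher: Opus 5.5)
Your proof is correct, but it takes a different route from the paper. The paper simply quotes Kling and Schueth: for $\Scal\le 0$, the volume, $\Scal$ and $5\Scal^2-8\|\Ric\|^2-7\|R\|^2$ determine $(a,b,c)$ up to permutation, and the preceding Proposition makes these quantities spectral invariants. You instead prove that algebraic determination directly. Your $u_i$ are exactly Milnor's $\mu_i$, and the identities $\Scal=2\sigma_2$, $\|\Ric\|^2=4(\sigma_2^2-2\sigma_1\sigma_3)$, $abc=x_1x_2x_3=(\sigma_1\sigma_2-\sigma_3)/8$ and $a^2=x_2x_3$ all check out. The sign hypothesis enters exactly once: it forces the second root $B/S-e_1$ of $S\sigma_1^2-B\sigma_1-A=0$ to be negative. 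In dimension $3$ one has $\|R\|^2=4\|\Ric\|^2-\Scal^2$, so once $\Scal$ is known the Kling--Schueth combination carries the same information as $\|\Ric\|^2$; both arguments therefore use identical spectral data. Even if one only had $\Scal^2$, as the paper's list of heat invariants literally states, the restriction $\Scal\le 0$ recovers $\Scal$. The paper's version is shorter but outsources the only nontrivial step. Yours is self-contained and shows transparently why non-positivity is needed and why a second positive root is not excluded when $\Scal>0$, which is why Theorem~\ref{thm:spectral_determination_positive} needs $\lambda_1$. It also treats $p\in\{2,3\}$ and $\SO(3)$ explicitly, which the paper leaves implicit.
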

	\begin{proof}
		Kling and Schueth \cite{KlingSchueth-DiracSpec} investigated the first eigenvalue of the Dirac operator on $\SU(2)$, $\SO(3)$ and they found that in the non-positive scalar curvature case, the following determine the parameters $a,b,c$ up to permutation uniquely: 
		\begin{itemize}
			\item The volume, $\vol$,
			\item the scalar curvature, $\Scal$,
			\item the value $5\Scal^{2}-8\|\Ric\|^{2}-7\|R\|^{2}$.
		\end{itemize}
		However, by the last proposition, the geometric properties $\vol$, $\|R\|^{2}$, $\|\Ric\|^{2}$ and $\Scal^{2}$ are themselves already spectral invariants.
	\end{proof}

In the arbitrary case, Lin, Schmidt and Sutton (see \cite[Thm.~1.6]{LinSchmidtSuttonII}) proved that the first three heat invariants do not determine the metric parameters. 
However, by incorporating the information from the first eigenvalue, we obtain the following result.
\begin{theorem}\label{thm:spectral_determination_positive}
	The isometry class of $(S^{3},g_{(a,b,c)})$ or $(\SO(3),g_{(a,b,c)})$ is uniquely determined by the spectrum of the Hodge-Laplacian on $p$-forms for any single degree $p \in \{0, 1, 2, 3\}$.
\end{theorem}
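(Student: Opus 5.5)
The plan is to reduce everything to the case $p=1$ and then combine the heat invariants with the first-eigenvalue formula of Theorem~\ref{thm:first eigenvalue}.

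\textbf{Reduction to $p\in\{0,1\}$.} The Hodge star is an isometry that intertwines $\Delta_p$ and $\Delta_{3-p}$, so it suffices to treat $p=0$ and $p=1$. For $p=1$, the set of nonzero eigenvalues of $\Delta_0$ is contained in $\Spec(\Delta_1)$, but it cannot simply be read off from $\Spec(\Delta_1)$. I will therefore run the argument separately for each $p$. In both cases the inputs are the same kind of data: the heat invariants $a_0,a_1,a_2$, which by the preceding proposition give $\vol$, $\Scal$, $\|\Ric\|^2$ and $\|R\|^2$, together with the first nonzero eigenvalue. For $p=1$ this eigenvalue is given by Theorem~\ref{thm:first eigenvalue}; for $p=0$ it is given by \cite[Thm.~1.1, 1.2]{Lauret-SpecSU(2)}.

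\textbf{Algebraic reformulation.} Put $u_1=bc/a$, $u_2=ac/b$, $u_3=ab/c$, so that $a^2=u_2u_3$, $b^2=u_1u_3$ and $c^2=u_1u_2$. Lemma~\ref{lem:curvature_operator} shows that the Ricci eigenvalues are
\[
\rho_1=2\bigl(-u_3^2-u_2^2+u_1^2+2u_2u_3\bigr)
\]
and its cyclic permutations. The invariants then become symmetric polynomials in $(u_1,u_2,u_3)$:
\[
\vol\propto abc=u_1u_2u_3,\qquad \Scal=\textstyle\sum\rho_i,\qquad \|\Ric\|^2=\textstyle\sum\rho_i^2 .
\]
In these variables the coexact part of $\lambda_1$ is simply $4\min_i u_i^2$, and the exact candidate is $a^2+b^2+c^2=u_1u_2+u_1u_3+u_2u_3=e_2(u)$. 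A direct expansion gives $\Scal=2\bigl(4e_2-\sum u_i^2\bigr)=2\bigl(6e_2-e_1^2\bigr)$. So $\vol$ fixes $e_3$, $\Scal$ fixes $6e_2-e_1^2$, and $\|\Ric\|^2$ gives one further relation. Lin--Schmidt--Sutton show that these three relations alone admit non-isometric solutions.

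\textbf{Using the first eigenvalue.} The key step is to use $\lambda_1$ to pin down one more elementary quantity, and I will split into the cases of Theorem~\ref{thm:first eigenvalue}.
\begin{enumerate}
\item If $\lambda_1=e_2(u)$ (only possible on $S^3$), then $e_2$ is known. Scalar curvature then gives $e_1^2$, and hence $e_1>0$. Together with $e_3$ from the volume, all of $e_1,e_2,e_3$ are known, so $\{u_i\}$, and therefore $(a,b,c)$, is determined up to permutation.
\item If $\lambda_1=4u_{\min}^2$, then one root $u_{\min}$ of the cubic $t^3-e_1t^2+e_2t-e_3$ is known, and $e_3$ is known. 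Eliminating via $\Scal$ leaves the remaining pair $(u,v)$ with $uv=e_3/u_{\min}$ and a quadratic relation in $u+v$. Its two roots are then separated either by the constraint that $u_{\min}$ is the minimum or by $\|\Ric\|^2$.
\end{enumerate}
One must also rule out that two metrics with identical spectra fall into different cases but produce the same value of $\lambda_1$. I plan to handle this by showing that whenever both candidates are consistent with the invariants, the full triple of symmetric functions coincides.

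\textbf{Main obstacle.} The hardest part will be case (2): proving that the quadratic left after elimination has at most one admissible root. This requires showing that the second root violates either $u\ge u_{\min}$, or the $\|\Ric\|^2$ constraint, or the inequality $4u_{\min}^2\le e_2$ that defines the case. My first attempt will be to write $u+v=s$, express $\Scal$ and $\|\Ric\|^2$ as polynomials in $s$ with $u_{\min}$ and $e_3$ fixed, and check that their simultaneous solvability forces a unique $s$. The case $p=0$ is analogous, with $\lambda_1$ coming from Lauret's formula: either $e_2$ is known, or $4(a^2+b^2)=4u_3(u_1+u_2)$ is known, which supplies the extra symmetric relation in the same way.
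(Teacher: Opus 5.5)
Your route is the paper's: $\vol$ and $\Scal$ from the heat invariants, $\lambda_1$ from Theorem~\ref{thm:first eigenvalue}, and a split into the exact case $\lambda_1=a^2+b^2+c^2=e_2(u)$ and the coexact case $\lambda_1=4u_{\min}^2$. Your $u_i$ are just a reparametrization of the paper's $x,y,z$, via $e_2(u)=\sigma_1$, $e_1(u)^2=\sigma_2^2/\sigma_3$, $e_3(u)^2=\sigma_3$.

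Two corrections first. With $\rho_i=2\bigl(u_i^2-(u_j-u_k)^2\bigr)$ one gets $\Scal=2\bigl(2e_2-\sum u_i^2\bigr)=2(4e_2-e_1^2)$, not $2(6e_2-e_1^2)$. The round metric must give $6$, not $18$. This corrected formula is the paper's $8\sigma_1-2\sigma_2^2/\sigma_3$.

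Second, case (2) is not the main obstacle. Put $m=u_{\min}$, $P=uv=e_3/m$ and $s=u+v$. Then $\Scal/2=4P-(s-m)^2$, so $s=m\pm\sqrt{4P-\Scal/2}$. The minus sign would force $u+v\le m$, which is impossible for $u,v\ge m$. No $\|\Ric\|^2$ is needed here, and this is exactly the justification the paper omits when it says the scalar curvature ``uniquely determines $y$ and $z$''.

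The genuine gap is the step you only announce: ruling out isospectral metrics that fall into different cases. The paper's two-case argument tacitly assumes both metrics lie in the same case, and with $\vol$, $\Scal$, $\lambda_1$ alone this fails. Take $(a^2,b^2,c^2)=(1.48995,1.48995,1.0201)$, which has $\lambda_1=a^2+b^2+c^2=4$. The case-(2) reconstruction from the same data gives $(a^2,b^2,c^2)\approx(1.50485,1.47508,1.02018)$. This second metric really has $\lambda_1=4b^2c^2/a^2=4<a^2+b^2+c^2\approx4.0001$. Both have $abc\approx1.50485$ and $\Scal\approx7.5672$, yet they are not isometric.

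So $\|\Ric\|^2$ must genuinely enter, and it does close the gap. With $K=\Scal/2$,
\[
\|\Ric\|^2=4\bigl(K^2-2Ke_1^2+16e_1e_3\bigr),\qquad Ke_1-8e_3=\textstyle\prod_i(u_j+u_k-u_i).
\]
Hence for fixed $K>0$ and $e_3$, the invariant $\|\Ric\|^2$ fixes $e_1$ up to the reflection $e_1\mapsto 8e_3/K-e_1=-K^{-1}\prod_i(u_j+u_k-u_i)$.

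Now suppose $\lambda_1=e_2$. Then $e_2\le 4u_{\min}^2$ gives $u_{\max}\le\tfrac32u_{\min}$, so the $u_i$ satisfy strict triangle inequalities. Hence $Ke_1>8e_3>0$, and the reflected value is negative. Any metric with the same $\vol$, $\Scal$, $\|\Ric\|^2$ therefore has the same $e_1$, hence the same $e_2=(K+e_1^2)/4$ and $e_3$, and is isometric. If neither metric is in the exact case, both are coexact and the root selection above finishes the argument; this also covers $\SO(3)$.

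Finally, $p=0$ is not ``analogous'' as you claim. Knowing $4u_3(u_1+u_2)$ does not hand you a root of the cubic the way $4u_{\min}^2$ does. The mixed-case issue also reappears with Lauret's formula, so $p=0$ needs a separate argument. The paper's proof is likewise written only for $\Delta_1$.
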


\begin{proof}
	The spectrum of $\Delta_1$ determines the heat invariants $a_0$ and $a_1$, which in turn determine the volume and the scalar curvature $\Scal$. Following the notation in \cite{KlingSchueth-DiracSpec}, let $x=a^2, y=b^2, z=c^2$ and let $\sigma_1, \sigma_2, \sigma_3$ denote their elementary symmetric polynomials, that is, 
	\[ \sigma_1 = x+y+z, \quad \sigma_2 = xy+yz+zx, \quad \sigma_3 = xyz. \]
	The volume fixes the product $\sigma_3$.
	
	By the explicit formula for the first eigenvalue established in Theorem \ref{thm:first eigenvalue}, $\lambda_1$ provides a third independent condition. We distinguish two cases:
	
	\begin{enumerate}
		\item The minimum is attained by the exact forms, i.e., $\lambda_1 = x+y+z$.
		In this case, the spectrum determines $\sigma_1 = \lambda_1$. The scalar curvature is given by (see \cite[(17)]{KlingSchueth-DiracSpec})
		\[ \Scal = 8\sigma_1 - 2\frac{\sigma_2^2}{\sigma_3}.\quad  \]
		Since $\sigma_1, \sigma_3$ and $\Scal$ are known, this equation uniquely determines $\sigma_2$ (as the positive solution of a quadratic equation). The metric parameters $x,y,z$ are then the unique roots of the polynomial $P(t) = t^3 - \sigma_1 t^2 + \sigma_2 t - \sigma_3$, by Vieta's formulas (see for example \cite{Weisstein-VietaFormulas}).
		
		\item The minimum is attained by a coclosed form, e.g., $\lambda_1 = 4yz/x$.
		Using $\sigma_3 = xyz$, we can rewrite this as $\lambda_1 = 4\sigma_3 / x^2$. Thus, $\lambda_1$ and $\sigma_3$ uniquely determine one parameter, say $x$. With $x$ known, the product of the remaining parameters is fixed by $yz = \sigma_3/x$. The scalar curvature equation then becomes a symmetric relation in $y$ and $z$, which, together with their product, uniquely determines $y$ and $z$.
	\end{enumerate}
	
	In both cases, the spectral data allows us to recover the parameters $a,b,c$ up to permutation. Thus, the isometry class is spectrally determined.
\end{proof}

	\bibliographystyle{plain}

\begin{thebibliography}{BLP22b}


		\bibitem[AH25]{AgricolaHenkel}
		{\sc I. Agricola, J. Henkel}.
		{\it The Laplace-Beltrami spectrum on naturally reductive homogeneous spaces}.
		Preprint, 2025.
		arXiv:2503.21416.
		DOI: \href{https://doi.org/10.48550/arXiv.2503.21416}{10.48550/arXiv.2503.21416}.
		

	
	\bibitem[Bä19]{Baer-curl}
	{\sc C. B\"ar}.
	{\it The curl operator on odd-dimensional manifolds}.
	J. Math. Phys. \textbf{60}:3 (2019), 031501.
	DOI: \href{https://doi.org/10.1063/1.5082528}{10.1063/1.5082528}.
	arXiv: \href{https://arxiv.org/abs/1702.02044}{1702.02044}.
		
		
		\bibitem[Be08]{BenHalima-Grassmannian}
		{\sc M. Ben Halima}.
		{\it Spectrum of the Hodge Laplacian on complex Grassmannian $\mathrm{Gr}_2(\mathbb{C}^{m+2})$.}
		Bull. Sci. Math. \textbf{132}:1 (2008), 19--26.
		DOI: \href{http://dx.doi.org/10.1016/j.bulsci.2007.04.003}{10.1016/j.bulsci.2007.04.003}.
			
			\bibitem[BB82]{BerardBergeryBourguignon-LaplSubmersion}
		{\sc L. B\'erard-Bergery, J.P. Bourguignon}.
		{\it Laplacians and Riemannian submersions with totally geodesic fibres}.
		Illinois J. Math. \textbf{26} (1982), 181--200.
		DOI: \href{https://doi.org/10.1215/ijm/1256046790}{10.1215/ijm/1256046790}.

\bibitem[Be]{Besse-Einstein}
	{\sc A. Besse}.
	{Einstein manifolds.}
Reprint of the 1987 edition. Classics in Mathematics,
	Springer, Berlin, 2008.
	DOI: \href{https://doi.org/10.1007/978-3-540-74311-8}{10.1007/978-3-540-74311-8}.

\bibitem[BP13]{BettiolPiccione13a}
	{\sc R. Bettiol, P. Piccione}.
	{\it Bifurcation and local rigidity of homogeneous solutions to the {Y}amabe problem on spheres.}
	Calc. Var. Partial Differential Equations \textbf{47}:3--4 (2013), 789--807.
	DOI: \href{http://dx.doi.org/10.1007/s00526-012-0535-y} {10.1007/s00526-012-0535-y}.
		
\bibitem[BLP22a]{BLPhomospheres}
	{\sc R. Bettiol, E.A. Lauret, P. Piccione}.
	{\it The first eigenvalue of a homogeneous CROSS.}
	J. Geom. Anal. \textbf{32} (2022), 76.
	DOI: \href{https://doi.org/10.1007/s12220-021-00826-7} {10.1007/s12220-021-00826-7}.
	
\bibitem[BLP22b]{BLPfullspec}
	{\sc R. Bettiol, E.A. Lauret, P. Piccione}.
	{\it Full Laplace spectrum of distance spheres in symmetric spaces of rank one.}
	Bull. Lond. Math. Soc. \textbf{54}:5 (2022), 1683--1704.
	DOI: \href{http://dx.doi.org/10.1112/blms.12650} {10.1112/blms.12650}.
		
	
		\bibitem[BG\O90]{BransonGilkeyOrsted-HeatInv}
		{\sc T.P. Branson, P.B. Gilkey, B. \O rsted}.
		{\it Leading terms in the heat invariants for the Laplacians of the
			{de Rham}, signature, and spin complexes}.
		Math. Scand. \textbf{66} (1990), 307--319.
		DOI: \href{http://dx.doi.org/10.7146/math.scand.a-12314}{10.7146/math.scand.a-12314}.
		
		\bibitem[Ba92]{Baer-DiracLensSpaces}
		{\sc C. B\"ar}.
		{\it The Dirac operator on homogeneous spaces and its spectrum on 3-dimensional lens spaces}.
		Arch. Math. (Basel) \textbf{59} (1992), 65--79.
		DOI: \href{https://doi.org/10.1007/BF01199016}{10.1007/BF01199016}.\\
		\emph{Erratum:} August 1997, \url{https://www.math.uni-potsdam.de/fileadmin/user_upload/Prof-Geometrie/Dokumente/Publikationen/erratum.pdf}.
		
		
		\bibitem[CV26]{CapoferriVassiliev-BeyondHodge}
		{\sc M. Capoferri, D. Vassiliev}.
		{\it Beyond the Hodge Theorem: curl and asymmetric pseudodifferential projections}.
		J. Lond. Math. Soc. \textbf{113}:1 (2026), e70431.
		DOI: \href{https://doi.org/10.1112/jlms.70431}{10.1112/jlms.70431}.
		
		
		\bibitem[CC90]{Colbois}
		{\sc B. Colbois, G. Courtois}.
		{\it A note on the first nonzero eigenvalue of the Laplacian acting on $p$-forms}.
		Manuscripta Math. \textbf{68}:2 (1990), 143--160.
		DOI: \href{https://doi.org/10.1007/BF02568757}{10.1007/BF02568757}.
		



\bibitem[ElC04]{Chami04}
	{\sc F. {El Chami}}.
	{\it Spectra of the Laplace operator on Grassmann manifolds}.
	Int. J. Pure Appl. Math. \textbf{12}:4 (2004), 395--417. 

\bibitem[ElC12]{Chami12}
	{\sc F. {El Chami}}.
	{\it A branching law from {${\rm Sp}(n)$} to {${\rm Sp}(q)\times {\rm Sp}(n-q)$} and an application to {L}aplace operator spectra}.
	Indian J. Pure Appl. Math. \textbf{43}:1 (2012), 71--86. 
	DOI: \href{http://dx.doi.org/10.1007/s13226-012-0005-4} {10.1007/s13226-012-0005-4}.
	
	\bibitem[Gi80]{Gibbons-SpectralAsymmetry}
	{\sc G.W. Gibbons}.
	{\it Spectral asymmetry and quantum field theory in curved spacetime}.
	Ann. Physics \textbf{125}:1 (1980), 98--116.
	DOI: \href{https://doi.org/10.1016/0003-4916(80)90120-7}{10.1016/0003-4916(80)90120-7}.

\bibitem[GH16]{GierHislop-Hodge5}
		{\sc M.E. Gier, P.D. Hislop}.
		{\it The Multiplicity of Eigenvalues of the Hodge Laplacian on 5-Dimensional Compact Manifolds}.
		J. Geom. Anal. \textbf{26}:4 (2016), 3176--3193.
		DOI: \href{https://doi.org/10.1007/s12220-015-9666-7}{10.1007/s12220-015-9666-7}.

\bibitem[GLP]{GilkeyLeahyPark-book}
	{\sc P.B. {Gilkey}, J.V. {Leahy}, J. {Park}}.
	{Spectral geometry, Riemannian submersions, and the Gromov-Lawson conjecture}.
	{\it Stud. Adv. Math.}, Chapman \& Hall/CRC, Boca
	Raton FL, 1999.
		
		
		\bibitem[GP96]{GilkeyPark-RiemSubmersion}
		{\sc P.B. Gilkey, J.H. Park}.
		{\it Riemannian submersions which preserve the eigenforms of the Laplacian}.
		Illinois J. Math. \textbf{40} (1996), 194--201.
		DOI: \href{https://doi.org/10.1215/ijm/1255986099}{10.1215/ijm/1255986099}.
		
		
\bibitem[GM06]{GornetMcGowan06}
    {\sc R. Gornet, J. McGowan}.
    {\it Lens Spaces, isospectral on forms but not on functions}.
    LMS J. Comput. Math. \textbf{9} (2006), 270--286.
    DOI: \href{http://dx.doi.org/10.1112/S1461157000001273} {10.1112/S1461157000001273}.

\bibitem[GM20]{GornetMcGowan20}
    {\sc R. Gornet, J. McGowan}.
    {\it Corrigendum to ``Lens Spaces, isospectral on forms but not on functions''}.
    DOI: \href{
    https://doi.org/10.48550/arXiv.1906.07787} {10.48550/arXiv.1906.07787} (2020).

		
		\bibitem[He25]{Henkel-AI-Math}
		{\sc J. Henkel}.
		{\it The mathematician's assistant: integrating AI into research practice}.
		Mathematische Semesterberichte \textbf{72}:2 (2025), 117--144.
		DOI: \href{https://doi.org/10.1007/s00591-025-00400-0}{10.1007/s00591-025-00400-0}.
		arXiv: \href{https://arxiv.org/abs/2508.20236}{2508.20236}.
		
		
		\bibitem[He26]{HL2025Code}
		{\sc J. Henkel}.
		{\it Spectral Analysis of the Hodge-Laplacian on SU(2)}.
		Electronic Supplementary Material, GitHub repository, 2025.
		\url{https://github.com/jmhenkel/Code/blob/main/Spectral_Analysis_SU2.ipynb}.
		
%
%

\bibitem[Ik89]{Ikeda88}
    {\sc A. Ikeda}.
    {\it Riemannian manifolds $p$-isospectral but not $p+1$-isospectral}.
    In \textsl{Geometry of manifolds ({M}atsumoto, 1988)}, 383--417,
    Perspect. Math. \textbf{8}, 1989.
		
\bibitem[IT78]{IkedaTaniguchi-SnPnC}
	{\sc A. Ikeda, Y. Taniguchi}.
	{\it Spectra and eigenforms of the Laplacian on $S^n$ nd $P^n(\mathbb{C})$.}
	Osaka J. Math. \textbf{15} (1978), 515--546.


		\bibitem[KS22]{KlingSchueth-DiracSpec}
		{\sc J. Kling, D. Schueth}.
		{\it On the Dirac Spectrum of Homogeneous 3-Spheres}.
		J. Geom. Anal. \textbf{32} (2022), article 275.
		DOI: \href{https://doi.org/10.1007/s12220-022-00997-x}{10.1007/s12220-022-00997-x}.



\bibitem[La18]{Lauret-p-spectralens}
    {\sc E.A. Lauret}.
    {\it The spectrum on $p$-forms of a lens space}.
    Geom. Dedicata \textbf{197} (2018), 107--122.
    DOI: \href{http://dx.doi.org/10.1007/s10711-018-0322-9} {10.1007/s10711-018-0322-9}.

\bibitem[La19]{Lauret-SpecSU(2)}
	{\sc E.A. Lauret}.
	{\it The smallest Laplace eigenvalue of homogeneous 3-spheres}.
	Bull. Lond. Math. Soc. \textbf{51}:1 (2019), 49--69.
	DOI: \href{http://dx.doi.org/10.1112/blms.12213} {10.1112/blms.12213}.
		
\bibitem[LMR16]{LauretMiatelloRossetti-LensSpaces}
    {\sc E.A. Lauret, R.J. Miatello, J.P. Rossetti}.
    {\it Spectra of lens spaces from 1-norm spectra of congruence lattices.}
    Int. Math. Res. Not. IMRN \textbf{2016}:4 (2016), 1054--1089.
    DOI: \href{http://dx.doi.org/10.1093/imrn/rnv159} {10.1093/imrn/rnv159}.

\bibitem[Le]{Lee-IRM}
	{\sc J. M. Lee}.
	{Introduction to Riemannian Manifolds.}
	{\it Grad. Texts in Math.} \textbf{176}.
	Springer Cham, 2018.
	DOI: \href{https://doi.org/10.1007/978-3-319-91755-9} {10.1007/978-3-319-91755-9}.

\bibitem[LSS21]{LinSchmidtSuttonII}
	{\sc S. Lin, B. Schmidt, C.J. Sutton}.
	{\it Geometric structures and the {L}aplace spectrum, {P}art II}.
	Trans. Amer. Math. Soc. \textbf{374}:12 (2021), 8483--8530.
	DOI: \href{https://doi.org/10.1090/tran/8417} {10.1090/tran/8417}.

\bibitem[Lo12]{Lotay12}
	{\sc J.D. Lotay}.
	{\it Stability of coassociative conical singularities}.
	Comm. Anal. Geom. \textbf{20}:4 (2012), 803--867. 
	DOI: \href{https://doi.org/10.4310/CAG.2012.v20.n4.a5} {10.4310/CAG.2012.v20.n4.a5}.

\bibitem[Ma97]{Mashimo97}
	{\sc K. Mashimo}.
	{\it Spectra of the {L}aplacian on the {C}ayley projective plane.}
	Tsukuba J. Math. \textbf{21}:2 (1997), 367--396.
	DOI: \href{http://dx.doi.org/10.21099/tkbjm/1496163248} {10.21099/tkbjm/1496163248}.
	
	\bibitem[MM63]{MatsushimaMurakami-HarmonicForms}
	{\sc Y. Matsushima, S. Murakami}.
	{\it On vector bundle valued harmonic forms and automorphic forms on symmetric Riemannian manifolds}.
	Ann. of Math. (2) \textbf{78} (1963), 365--416.
	DOI: \href{https://doi.org/10.2307/1970348}{10.2307/1970348}.

\bibitem[MR09]{MR-survey}
    {\sc R.J. Miatello, J.P. Rossetti}.
    {\it Spectral properties of flat manifolds}.
    In \textsl{New developments in Lie theory and geometry}, 83--113,
    \textit{Contemp. Math.} \textbf{491}, Amer. Math. Soc., Providence, RI, 2009.

\bibitem[Mi76]{Milnor-Curvatures}
	{\sc J. Milnor}.
	{\it Curvatures of left invariant metrics on lie groups.}
	Adv. Math. \textbf{21}:3 (1976), 293--329.
	DOI: \href{http://dx.doi.org/10.1016/S0001-8708(76)80002-3} {10.1016/S0001-8708(76)80002-3}.
	
		

		
\bibitem[Ro]{Rosenberg-Laplacian}
	{\sc S. Rosenberg}.
	{The {L}aplacian on a {R}iemannian manifold}.
	\textit{London Math. Soc. Stud. Texts} \textbf{31}, 
	Cambridge University Press, Cambridge, 1997.
    DOI: \href{https://doi.org/10.1017/CBO9780511623783} {10.1017/CBO9780511623783}.	



		
		\bibitem[Se01]{Semmelmann-Habil}
		{\sc U. Semmelmann}.
		{\it Conformal Killing forms on Riemannian manifolds}.
		Habilitationsschrift, Fakult\"at f\"ur Mathematik und Informatik,
		Ludwig-Maximilians-Universit\"at M\"unchen, 2001.
		Available at
		\href{https://www.igt.uni-stuttgart.de/dokumente/semmelmann/semmelmann_Publ/killing20_24a.pdf}%
		{https://www.igt.uni-stuttgart.de/dokumente/semmelmann/semmelmann\_Publ/killing20\_24a.pdf}.
		
\bibitem[SW02]{SemmelmannWeingart-QK}
	{\sc U. Semmelmann, G. Weingart}.
	{\it Vanishing theorems for quaternionic Kähler manifolds}.
	J. Reine Angew. Math. \textbf{544} (2002), 111--132.		
	DOI: \href{http://dx.doi.org/10.1515/crll.2002.019} {10.1515/crll.2002.019}.
		
		
\bibitem[St84]{Sthanumoorthy84}
    {\sc N. Sthanumoorthy}.
    {\it Spectra of de {R}ham {H}odge operator on {${\rm SO}(n+2)/{\rm SO}(2)\times {\rm SO}(n)$}.}
    Bull. Sci. Math. (2) \textbf{108}:3 (1984), 297--320.

\bibitem[Ta]{Takeuchi}
	{\sc M. Takeuchi}.
	Modern spherical functions. (Transl. from the Japanese by Toshinobu Nagura.)
	{\it Transl. Math. Monogr.} \textbf{135}.
	Amer. Math. Soc., Providence, 1994.

\bibitem[Ta79]{Tanno79}
	{\sc S. Tanno}.
	{\it The first eigenvalue of the {L}aplacian on spheres.}
	T\v ohoku Math. J. (2) \textbf{31}:2 (1979), 179--185. 
	DOI: \href{http://dx.doi.org/10.2748/tmj/1178229837} {10.2748/tmj/1178229837}.

\bibitem[Ts81]{Tsukamoto81}
    {\sc C. Tsukamoto}.
    {\it Spectra of Laplace-Beltrami operators on $\textrm{SO}(n+2)/\textrm{SO}(2)\times\textrm{SO}(n)$ and $\textrm{Sp}(n+1)/\textrm{Sp}(1)\times\textrm{Sp}(n)$}.
    Osaka J. Math. \textbf{18}:2 (1981), 407--426.
    DOI: \href{http://dx.doi.org/10.18910/8349} {10.18910/8349}.

\bibitem[Wa]{Wallach-HarmonicAnalysis}
    {\sc N. Wallach}.
    {\it Harmonic analysis on homogeneous spaces}.
    { Pure and Applied Mathematics} \textbf{19}.
    Marcel Dekker, Inc., New York, 1973.
    
    \bibitem[We02]{Weisstein-VietaFormulas}
    {\sc E.W. Weisstein}.
    {\it Vieta's Formulas}.
    In: MathWorld---A Wolfram Web Resource.
    \url{https://mathworld.wolfram.com/VietasFormulas.html}.
    
\bibitem[Wi25]{Wikipedia-RicciDecomposition}
{\sc Wikipedia contributors}.
{\it Ricci decomposition.}
Wikipedia, The Free Encyclopedia.
Revision of 10 September 2025, 08:39 UTC.
URL: \url{https://en.wikipedia.org/w/index.php?title=Ricci_decomposition&oldid=1310557434#Related_formulas}.
Accessed 31 March 2026.
    

\end{thebibliography}

\end{document}